\newtheorem{thm}{Theorem}[section]
\newtheorem{prop}[thm]{Proposition}
\newtheorem{lem}[thm]{Lemma}
\newtheorem{cor}[thm]{Corollary}
\theoremstyle{definition}
\newtheorem{ques}[thm]{Question}
\newtheorem{defn}[thm]{Definition}
\theoremstyle{remark}
\newtheorem{remk}[thm]{Remark}
\numberwithin{equation}{section}
\newcommand{\sH}{{\mathcal H}}
\newcommand{\sI}{{\mathcal I}}
\newcommand{\sK}{{\mathcal K}}
\newcommand{\sO}{{\mathcal O}}
\newcommand{\sR}{{\mathcal R}}
\newcommand{\sS}{{\mathcal S}}
\newcommand{\sZ}{{\mathcal Z}}
\newcommand{\G}{{\mathbb G}}
\renewcommand{\P}{{\mathbb P}}
\newcommand{\Q}{{\mathbb Q}}
\newcommand{\Z}{{\mathbb Z}}
\newcommand{\Ker}{{\rm Ker}}
\newcommand{\Alb}{{\rm Alb}}
\newcommand{\CH}{{\rm CH}}
\newcommand{\surj}{\twoheadrightarrow}
\newcommand{\inj}{\hookrightarrow}
\newcommand{\red}{{\rm red}}
\newcommand{\Pic}{{\rm Pic}}
\newcommand{\Hom}{{\rm Hom}}
\newcommand{\Spec}{{\rm Spec \,}}
\newcommand{\sing}{{\rm sing}}
\newcommand{\ab}{\rm ab}
\newcommand{\divf}{{\rm div}}
\newcommand{\Sch}{{\operatorname{\mathbf{Sch}}}}
\newcommand{\Sm}{{\mathbf{Sm}}}
\newcommand{\hocolim}{\mathop{{\rm hocolim}}}
\newcommand{\cyc}{{\operatorname{\rm cyc}}}
\newcommand{\ds}{{/\kern-3pt/}}
\newcommand{\lci}{{\rm l.c.i.\!}}
\newcommand{\ov}{\overline}
\renewcommand{\dim}{\text{\rm dim}}
\newcommand{\tuborg}{\left\{\begin{array}{ll}}
\newcommand{\sluttuborg}{\end{array}\right.}
\newcommand{\zar}{{\rm zar}}
\newcommand{\reg}{{\rm reg}}
\newcommand{\tor}{{\rm tor}}
\newcommand{\abk}{{\rm abk}}
\newcommand{\adiv}{{\rm adiv}}
\newcommand{\wt}{\widetilde}
\newcommand{\Rus}{{\rm Rus}}
\newcommand{\cdh}{{\rm cdh}}
\newcommand{\Char}{{\rm char}}
\newcommand{\alb}{{\rm alb}}
\def\cO{\mathcal{O}}
\newcounter{elno}
\newcounter{elno-abc}   
\newcounter{elno-abc-prime}
\begin{document}
\title{Suslin homology via cycles with modulus and applications}
\author{Federico Binda, Amalendu Krishna}
\address{Dipartimento di Matematica ``Federigo Enriques'',
  Universit\`a degli Studi di Milano\\ Via Cesare Saldini 50, 20133 Milano,
  Italy}
\email{federico.binda@unimi.it}
\address{Department of Mathematics, Indian Institute of Science,  
  Bangalore, 560012, India.}
\email{amalenduk@iisc.ac.in}

\keywords{$K$-theory, Algebraic cycles, Motivic cohomology}        

\subjclass[2020]{Primary 14C25; Secondary 14F42, 19E15}

\maketitle

\begin{quote}\emph{Abstract.}
  We show that for a smooth projective variety $X$ over a field $k$ and a reduced effective
  Cartier divisor $D \subset X$, the Chow group of 0-cycles with modulus $\CH_0(X|D)$
  coincides with the Suslin homology $H^S_0(X \setminus D)$ under some necessary conditions
  on $k$ and $D$. We derive several consequences, and  we answer to a question of Barbieri-Viale and Kahn.
\end{quote}
\setcounter{tocdepth}{1}
\tableofcontents

\section{Introduction}\label{sec:Intro}
The theory of Chow groups with modulus is presently an active area of research whose
primary goal is to provide a cycle theoretic description of the relative $K$-theory of
smooth varieties and of the (ordinary) $K$-theory of singular varieties.
As such, this is a non-$\mathbb{A}^1$-homotopy invariant cohomology theory: this  poses
a major hurdle while dealing with Chow groups with modulus. The question then
arises whether one can isolate a number of special cases in which  the Chow groups with modulus
behave like a homotopy invariant cohomology theory. This note is an attempt to answer
this question.

More specifically, we exhibit a phenomenon which justifies the belief that
the Chow groups with modulus associated to a normal crossing divisor on a smooth
scheme over a field should behave like a homotopy invariant theory.
The precise result that we prove is the following.

\subsection{Main result}\label{sec:MR}
Let $k$ be a field and $X$ a smooth projective scheme of pure dimension $d \ge 0$
over $k$. Let $D \subset X$ be a reduced effective Cartier divisor (possibly empty)
on $X$ with complement
$U$. In this case, we shall say that $(X,D)$ is a reduced modulus pair.
Let $\CH_0(X|D)$ be the Chow group of 0-cycles on $X$ with modulus $D$
(see \cite{Kerz-Saito-Duke}). Let $H^S_0(U)$ denote the (zeroth)
  Suslin homology of $U$ (see \cite[Defn.~10.8]{MVW}, where
  it is called the algebraic singular homology).
  If $k$ admits resolution of singularities, then $H^S_0(U)$ coincides with the 
  Suslin-Voevodsky motivic cohomology with compact support $H^{2d}_c(U, \Z(d))$.
  There is a canonical surjection (e.g., using \cite[Thm.~5.1]{Schmidt-ant})
  \begin{equation}\label{eqn:Surjection}
    \phi_{X|D} \colon \CH_0(X|D) \surj H^S_0(U).
  \end{equation}

  This map is clearly an isomorphism if $d \le  1$.
  However, it is known (see \cite[Thm.~1.1]{Binda-Krishna-21}) that $\phi_{X|D}$ may not
  be an isomorphism if $d \ge 2$ (even if $k$ is algebraically closed).
  The goal of this paper is to prove the following result.

\begin{thm}\label{thm:Main}
  Assume that one of the following conditions holds.
  \begin{enumerate}
  \item
  $D$ is a simple normal crossing divisor on $X$.
  \item
    $k$ is perfect, $d \le 2$ and $D$ is seminormal.
  \item
    $k$ is algebraically closed of positive characteristic.
  \item
    $k \subseteq \ov{\Q}$.
  \end{enumerate}
Then the map
    \[
      \phi_{X|D} \colon \CH_0(X|D) \to H^S_0(U)
    \]
    is an isomorphism.
  \end{thm}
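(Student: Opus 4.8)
\emph{Strategy and first reduction.} Since $\phi_{X|D}$ is already surjective, the entire content is its injectivity, and I would reduce this to a geometric statement about curves. Both $\CH_0(X|D)$ and $H^S_0(U)$ are quotients of the free abelian group $z_0(U)$ on the closed points of $U$, and $\phi_{X|D}$ is induced by the identity of $z_0(U)$ — this is essentially \cite[Thm.~5.1]{Schmidt-ant}. Hence $\phi_{X|D}$ is injective exactly when every Suslin relation is a modulus relation, and by additivity in the Suslin complex this comes down to the following: for every integral closed curve $Z\subset\A^1\times U$ that is finite and surjective over $\A^1$, the fibres satisfy $[Z(0)]=[Z(1)]$ in $\CH_0(X|D)$.

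\emph{Reduction to a modulus trade-off.} Let $\ol Z\subset\P^1\times X$ be the closure of $Z$ and let $\nu\colon C\to\ol Z$ be its normalization, with projections $\pi\colon C\to\P^1$ and (abusively) $\nu\colon C\to X$. Because $Z$ is finite over $\A^1$ it is closed in $\A^1\times X$; hence $\ol Z\cap(\{0,1\}\times X)=Z(0)\sqcup Z(1)$ lies inside $\{0,1\}\times U$, and every point of $C$ lying over $D$ lies over $\infty\in\P^1$. Put $f=\pi$ and $h=f/(f-1)\in k(C)^\times$. Then $\operatorname{div}_C(h)=\pi^*(0)-\pi^*(1)$, so $\nu_*\operatorname{div}_C(h)=[Z(0)]-[Z(1)]$ in $z_0(U)$. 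Now $h$ yields a modulus relation for $(X,D)$ precisely when it is admissible, i.e. $\ord_x(1-h)\ge\operatorname{mult}_x(\nu^*D)$ for all $x\in\nu^{-1}(D)$; since $1-h=-(f-1)^{-1}$ and $f$ has a pole of order $\operatorname{mult}_x(\pi^*\infty)$ at such $x$, admissibility is exactly the trade-off inequality
\[
  \nu^*D \ \le\ \pi^*\infty \qquad\text{on } C .
\]
Thus the theorem is equivalent to a moving lemma: given $Z$, modify it inside its Suslin-homotopy class — or pass to a suitable finite cover or modification of $C$ — so that the pullback of $D$ is dominated by the fibre of $\pi$ over $\infty$, or else trivialize $[Z(0)]-[Z(1)]$ in $\CH_0(X|D)$ by other admissible data. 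The $\A^1$-invariance of Suslin homology is what gives room to change $Z$; the non-$\A^1$-invariance of $\CH_0(\,\cdot\,|D)$ is exactly why this cannot be done unconditionally.

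\emph{Running the moving lemma, case by case.} For $D$ a simple normal crossing divisor I would argue Nisnevich-locally, where $(X,D)$ is the standard pair $(\A^d,\{x_1\cdots x_r=0\})$, and carry out an explicit sequence of $\A^1$-homotopies driven by the combinatorics of the strata of $D$ (in the spirit of the Kerz--Saito moving lemma, cf.\ \cite{Kerz-Saito-Duke}) to force $\nu^*D\le\pi^*\infty$, globalizing via a Nisnevich-local presentation of $\CH_0(X|D)$; a cleaner route, if available, is a dévissage along the strata of $D$ reducing to the case $D=\varnothing$, where $\phi$ is trivially bijective. For $k$ perfect, $d\le 2$ and $D$ seminormal, the curves $C$ above live on surfaces, where resolution of singularities of the relevant pairs is available and there is much more room; seminormality of $D$ controls the local geometry at the singular points of $D$ and lets one match up the one-dimensional rational equivalences there, so a dévissage to the (already known) one-dimensional case goes through. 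For $k$ algebraically closed of positive characteristic I would use that both groups commute with filtered colimits of the base field to reduce to $k=\ov{\F}_p$, where $\CH_0(X|D)$ and $H^S_0(U)$ have torsion degree-zero parts; a Roitman-type theorem — the comparison of torsion subgroups for Chow groups with modulus — shows $\phi_{X|D}$ is injective on torsion, and since over $\ov{\F}_p$ its kernel is itself torsion, the kernel must vanish. Finally, for $k\subseteq\ov\Q$ one spreads $(X,D)$ out over a ring of $S$-integers and specializes to closed fibres over finite fields (then over $\ov{\F}_p$), where the previous case applies; the existence of infinitely many primes of good reduction is what makes the specialization maps detect $\ker\phi_{X|D}$, so injectivity descends to characteristic zero, and here the resolution of singularities available over $\ov\Q$ streamlines the comparison with $H^{2d}_c(U,\Z(d))$.

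\emph{Main obstacle.} The crux is the moving lemma in the simple-normal-crossing case: arranging $\nu^*D\le\pi^*\infty$ — or directly trivializing $[Z(0)]-[Z(1)]$ in $\CH_0(X|D)$ — \emph{without altering the pair} $(X,D)$, which is delicate precisely because $\CH_0(\,\cdot\,|D)$ is not $\A^1$-invariant and the crossing hypothesis must be used in an essential way. In the arithmetic cases the difficulty migrates to constructing well-behaved, suitably injective specialization maps for Chow groups with modulus and to identifying $\ker\phi_{X|D}$ over $\ov{\F}_p$ as a torsion group, which together with the Roitman-type comparison is what finally pins it down to $0$.
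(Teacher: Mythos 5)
Your reduction of the problem is correct and cleanly stated: injectivity of $\phi_{X|D}$ amounts to showing that for every integral curve $Z\subset\A^1\times U$ finite and surjective over $\A^1$, with normalized closure $\nu\colon C\to X$ and $h=f/(f-1)$, the cycle $\nu_*\operatorname{div}(h)=[Z(0)]-[Z(1)]$ dies in $\CH_0(X|D)$, and the obstruction is precisely that $h\equiv 1$ only modulo $(\nu^*D)_{\red}$ rather than modulo $\nu^*D$. But at that point the proposal stops: the ``moving lemma'' that would arrange $\nu^*D\le\pi^*\infty$, or otherwise trivialize the class by admissible data, is exactly the theorem itself, and you give no mechanism for proving it. Arguing ``Nisnevich-locally'' does not help because $\CH_0(X|D)$ has no known Nisnevich-local presentation, and a d\'evissage ``reducing to $D=\varnothing$'' cannot work since $\CH_0(X|D)$ genuinely differs from $\CH_0(X)$. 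The counterexamples you yourself allude to (reduced non-snc $D$ over $\C$) show that any such argument must use the snc/seminormal/$\ov\Q$ hypotheses in an essential, quantitative way, and your sketch never locates where they enter.

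For comparison, the paper's mechanism is entirely different and is where those hypotheses actually do work. One embeds $\CH_0(X|D)$ into $\CH^{\lci}_0(S_X)$ for the double $S_X=X\amalg_D X$ (the fundamental exact sequence of \cite{GKR}), reduces to surfaces by a Bertini argument, identifies the image with $SK_0(S_X)$ via the cycle class map, and then shows $SK_0(S_X)\cong SKH_0(S_X)$. Since $KH$ is invariant under seminormalization, the double $S_C$ of the curve $C$ along $E=\nu^*D$ and along $E_{\red}$ have the same $KH_0$, which is exactly what converts your Suslin relation $h$ (a unit mod $E_{\red}$) into a modulus relation (a unit mod $E$). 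The hypotheses (1), (2), (4) enter through the vanishing of relative $K_2$-groups of truncated polynomial rings over number fields (via cyclic homology) and through Milnor $K$-sheaf computations for normal crossing curves --- none of which is visible in your outline. Your treatment of case (3) via Roitman-type torsion comparison over $\ov{\F}_p$ is essentially the argument of \cite{Binda-Krishna-21}, which the paper simply cites; but your case (4) via spreading out and specialization is also unsubstantiated, since you would need the specialization maps to be injective on $\ker\phi_{X|D}$, and no such injectivity is known (the kernel could a priori be a divisible group killed by every specialization). So the proposal correctly frames the problem but is missing the central idea in cases (1), (2) and (4).
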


  \vskip .3cm

  The case (3) of Theorem~\ref{thm:Main}
  is known by \cite[Thm.~1.1]{Binda-Krishna-21} (we included it in the theorem only for
  completeness). Hence, the new results are (1), (2) and (4).
  We expect the condition $d \le 2$ in (2) to be unnecessary, but do not know
  how to show it. However, the condition on $\Char(k)$ in (3) and algebraicity of
  $k$ over $\Q$ in (4) can not be relaxed (see  \cite[Thm.~4.4]{Binda-Krishna-21}).
We also remark that $\phi_{X|D}$ is almost never an isomorphism if $D$ is not reduced.

\subsection{Applications}\label{sec:App}
As Theorem \ref{thm:Main} identifies an a priori non-homotopy invariant theory with a
homotopy invariant one, we expect it to have many consequences. We list some in this
paper.

\subsubsection{Class field theory of Kerz-Saito}\label{sec:KSCFT}
The goal of geometric class field theory is to describe the abelian fundamental group of a variety (say, defined over a finite field) in terms of certain groups of algebraic cycles. The modern perspective on the problem is given by the work of Kerz and Saito \cite{Kerz-Saito-Duke} (see also \cite{BKS}), where the class groups used to describe the abelian fundamental group of a variety $X$ with bounded ramification along a divisor $D$ is precisely the Chow group of zero-cycles with modulus.

By a clever induction argument on the ramification index,  the proof of the
  main theorem of \cite{Kerz-Saito-Duke}   uses as key ingredient the existence of a  reciprocity isomorphism
  \begin{equation}\label{eqn:KS}
    \rho_{X|D} \colon \CH_0(X|D)_0 \xrightarrow{\cong} \pi^{\ab}_1(X,D)_0
  \end{equation}
for a reduced simple normal crossing divisor $D$ 
  on a smooth projective surface $X$ over a finite field, where $\pi^{\ab}_1(X,D)_0$ is (the degree zero part of) the abelian fundamental group of $X$ with modulus $D$, a quotient of the usual \'etale fundamental group $\pi_1^{\ab}(U)$, where $U = X\setminus D$. For the proof of this fact, Kerz and Saito refer to a result of Kerz-Schmidt
  (see \cite[Thm.~8.3]{Kerz-Schmidt}) that, reformulated in an appropriate way, affirms the existence of an isomorphism
  \begin{equation}\label{eqn:KS-0}
   \rho^t_U \colon H^S_0(U)_0 \xrightarrow{\cong} \pi^{t, \ab}_1(U)_0, 
 \end{equation}
 where  $\pi^{t, \ab}_1(U)$ is the tame
 fundamental group of $U$ (classifying tame finite \'etale coverings of $U$), a further quotient of $\pi_1^{\ab}(U)$. The comparison between \eqref{eqn:KS} and \eqref{eqn:KS-0} is very indirect, and passes through non-trivial results in ramification theory. 

An immediate application of Theorem \ref{thm:Main} is that the isomorphism
~\eqref{eqn:KS} is in fact a direct corollary of ~\eqref{eqn:KS-0}, and holds
in any dimension.
 It also follows immediately from Theorem \ref{thm:Main} and Kerz-Schmidt theorem that
 for any smooth projective variety $X$ over a finite field and a
 reduced simple normal crossing divisor $D \subset X$ with complement $U$, the canonical
 map
 \[
   \pi^{\ab}_1(X,D) \to  \pi^{t, \ab}_1(U)
 \]
 is an isomorphism of topological groups.

 \subsubsection{Reciprocity for Russell's relative Chow group}\label{sec:RCG}
An independent theory of relative Chow groups with modulus was introduced and extensively
studied
  by Russell (see \cite{Russell-ant}). For $D \subset X$ an effective
  Cartier divisor, let us denote
  Russell's relative Chow group of 0-cycles by $\CH^{\rm Rus}_0(X|D)$.
  It is clear from the definition of this group (see op. cit.)
  and \cite[Thm.~5.1]{Schmidt-ant}
  that it coincides with $H^S_0(U)$ when $D$ is reduced.  We thus immediately get the
  following.

  \begin{cor}\label{cor:Russell}
  Under the hypotheses of Theorem \ref{thm:Main}, the canonical map
    \begin{equation}\label{cor:Main-0}
      \phi_{X|D} \colon \CH_0(X|D) \to \CH^{\Rus}_0(X|D)
    \end{equation}
is an isomorphism.
\end{cor}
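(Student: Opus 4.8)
The plan is to deduce the corollary directly from Theorem~\ref{thm:Main}, by identifying $\CH^{\Rus}_0(X|D)$ with $H^S_0(U)$ in a way that is compatible with the comparison maps out of $\CH_0(X|D)$. Once this identification is available, the statement is formal.

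First I would line up the three descriptions of the groups involved. Each of $\CH_0(X|D)$, $\CH^{\Rus}_0(X|D)$ and $H^S_0(U)$ is a quotient of the free abelian group $Z_0(U)$ on the closed points of $U$, and they differ only in the subgroup of relations. The relations defining $H^S_0(U)$ are those coming from finite correspondences $\Delta^1_k \to U$. Those defining $\CH_0(X|D)$ come from the cycles $\nu_{C,*}(\divf(f))$, for $C \subset X$ an integral curve not contained in $D$ with normalization $\nu_C \colon \widetilde{C} \to X$ and $f$ a rational function on $\widetilde{C}$ satisfying the Kerz--Saito modulus condition relative to $\nu_C^{\ast} D$; every such relation maps to a Suslin relation, which is exactly what produces the surjection~\eqref{eqn:Surjection}. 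Russell's group $\CH^{\Rus}_0(X|D)$ is defined by imposing instead an intrinsically phrased modulus condition on the admissible curves, and the content of \cite[Thm.~5.1]{Schmidt-ant} is precisely that for \emph{reduced} $D$ this condition cuts out exactly the Suslin relations; concretely, it gives a canonical isomorphism $\theta_{X|D} \colon \CH^{\Rus}_0(X|D) \xrightarrow{\cong} H^S_0(U)$ which is induced by the identity on $Z_0(U)$.

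Next I would check the compatibility of the maps. The canonical map $\phi_{X|D}$ of~\eqref{cor:Main-0}, the surjection of~\eqref{eqn:Surjection}, and the isomorphism $\theta_{X|D}$ are all, by construction, induced by the identity of $Z_0(U)$ on generators. Hence $\theta_{X|D} \circ \phi_{X|D} \colon \CH_0(X|D) \to H^S_0(U)$ coincides with~\eqref{eqn:Surjection}, i.e.\ with the map of Theorem~\ref{thm:Main}. Applying Theorem~\ref{thm:Main} under any one of the hypotheses (1)--(4), this composite is an isomorphism; since $\theta_{X|D}$ is an isomorphism, so is $\phi_{X|D}$ of~\eqref{cor:Main-0}, which is the assertion.

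The only step with genuine content is the identification $\theta_{X|D}$ of Russell's relative rational equivalence with the Suslin relations in the reduced case; in the present write-up this is a citation to \cite[Thm.~5.1]{Schmidt-ant} rather than an obstacle, and with it in hand the corollary is immediate. Were one to argue from scratch, comparing Russell's "relative $0$-cycles'' construction with the description of $H^S_0(U)$ via admissible curves --- i.e.\ showing the two a priori different modulus conditions generate the same subgroup of relations for reduced $D$ --- is where essentially all the work would lie; everything else is bookkeeping on generators plus an invocation of Theorem~\ref{thm:Main}.
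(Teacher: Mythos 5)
Your proposal is correct and follows essentially the same route as the paper: the authors likewise observe that, for reduced $D$, Russell's group $\CH^{\Rus}_0(X|D)$ coincides with $H^S_0(U)$ by its definition together with \cite[Thm.~5.1]{Schmidt-ant}, that all maps in sight are induced by the identity on $\sZ_0(U)$, and then invoke Theorem~\ref{thm:Main}. Nothing further is needed.
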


Combining \eqref{cor:Main-0} with the main results of \cite{BKS},
  \cite{Kerz-Saito-Duke} and \cite[Thm.~1.4]{Gupta-Krishna-BF},
  we obtain the following reciprocity theorem for
  Russell's relative Chow group. 
  Let $\pi^{\abk}_1(X,D)$ be the log version
  (see \cite[Defn.~9.6]{Gupta-Krishna-Duality} or \cite[Defn.~7.2]{Barrientos}) 
  of the non-log abelian fundamental group with modulus $\pi^{\adiv}_1(X,D)$
  (see \cite[Defn.~7.5]{Gupta-Krishna-REC}). The latter group coincides with
  the fundamental group with modulus $\pi^{\ab}_1(X,D)$
  used in \cite{BKS} and \cite{Kerz-Saito-Duke}.

\begin{thm}\label{thm:Main-0-0}
    Assume that $k$ is finite and $(X,D)$ is a reduced modulus pair over $k$ such that
    one of the following conditions holds.
    \begin{enumerate}
    \item
     $D$ is a simple normal crossing divisor.
    \item
      $d \le 2$ and $D$ is seminormal.
      \end{enumerate}
Then the Frobenius substitution at the
    closed points of $X \setminus D$ gives rise to a reciprocity isomorphism
    \[
      \rho_{X|D} \colon \CH^{\Rus}_0(X|D)_0 \xrightarrow{\cong}
      \pi^{\abk}_1(X,D)_0
    \]
    of finite groups.
    \end{thm}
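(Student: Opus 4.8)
The plan is to transport the class field theory of zero-cycles with modulus over finite fields from $\CH_0(X|D)$ to $\CH^{\Rus}_0(X|D)$ along the isomorphism of Corollary~\ref{cor:Russell}, and then to rewrite the target fundamental group in its log incarnation. Thus the proof will combine three inputs: (i) the identification $\CH_0(X|D)\cong\CH^{\Rus}_0(X|D)$ from Corollary~\ref{cor:Russell}; (ii) the reciprocity isomorphism $\CH_0(X|D)_0\xrightarrow{\cong}\pi^{\adiv}_1(X,D)_0$ coming from the main results of \cite{Kerz-Saito-Duke} and \cite{BKS}, bearing in mind that $\pi^{\adiv}_1(X,D)$ is the fundamental group with modulus $\pi^{\ab}_1(X,D)$ used there; and (iii) the comparison $\pi^{\adiv}_1(X,D)\cong\pi^{\abk}_1(X,D)$ for reduced $D$ supplied by \cite[Thm.~1.4]{Gupta-Krishna-BF}.

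First I would check that Theorem~\ref{thm:Main} applies in both cases: a finite field is perfect, hypothesis (1) here is hypothesis (1) there, and in case (2) a seminormal divisor on a variety of dimension $d\le 2$ falls under hypothesis (2) there. Hence Corollary~\ref{cor:Russell} yields an isomorphism $\phi_{X|D}\colon \CH_0(X|D)\xrightarrow{\cong}\CH^{\Rus}_0(X|D)$. Since $\phi_{X|D}$ is induced by the identity on the groups of zero-cycles supported on $X\setminus D$, it is compatible with the degree homomorphism to $\Z$ (proper pushforward to $\Spec k$), and therefore restricts to an isomorphism $\CH_0(X|D)_0\xrightarrow{\cong}\CH^{\Rus}_0(X|D)_0$ on the degree-zero subgroups.

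Next I would invoke the higher-dimensional class field theory with modulus. For the reduced modulus pair $(X,D)$ over the finite field $k$ satisfying (1) or (2), the main results of \cite{Kerz-Saito-Duke} (for $d\le 2$) and of \cite{BKS} (in general), together with the identification $\pi^{\adiv}_1(X,D)\cong\pi^{\abk}_1(X,D)$ of \cite[Thm.~1.4]{Gupta-Krishna-BF} for reduced $D$, provide a map defined by sending a closed point $x\in X\setminus D$ to its Frobenius substitution, which restricts to an isomorphism of finite groups $\CH_0(X|D)_0\xrightarrow{\cong}\pi^{\abk}_1(X,D)_0$. Composing with the inverse of $\phi_{X|D}$ from the previous step gives
\[
  \rho_{X|D}\colon \CH^{\Rus}_0(X|D)_0 \xrightarrow{\phi_{X|D}^{-1}} \CH_0(X|D)_0 \xrightarrow{\cong} \pi^{\abk}_1(X,D)_0,
\]
an isomorphism of finite groups; it is the Frobenius substitution map because $\phi_{X|D}$ sends the class of a closed point of $X\setminus D$ to the class of the same point, so the composite agrees with the reciprocity map on the generators.

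The main obstacle I anticipate is not the discovery of a new argument but the careful matching of scopes and of the maps themselves. One must verify that the reciprocity isomorphism $\CH_0(X|D)_0\cong\pi^{\ab}_1(X,D)_0$ is genuinely available under both hypotheses (1) and (2) --- in particular for a seminormal, not necessarily simple normal crossing, divisor on a surface --- and that the isomorphism it furnishes is literally induced by Frobenius substitution rather than being an abstractly constructed map, so that the composite legitimately deserves the name $\rho_{X|D}$. A companion point is the exact range of validity of the log/non-log comparison \cite[Thm.~1.4]{Gupta-Krishna-BF}, which must cover reduced $D$ in both cases; granting these, the finiteness of $\CH^{\Rus}_0(X|D)_0$ and $\pi^{\abk}_1(X,D)_0$ is already contained in the cited class field theory and needs no further input.
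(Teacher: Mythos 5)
Your proposal is correct and follows exactly the route the paper takes: the paper derives Theorem~\ref{thm:Main-0-0} by combining Corollary~\ref{cor:Russell} (i.e.\ Theorem~\ref{thm:Main} applied via the degree-preserving identification $\CH_0(X|D)\cong\CH^{\Rus}_0(X|D)$) with the reciprocity results of \cite{Kerz-Saito-Duke} and \cite{BKS} and the log/non-log comparison of \cite[Thm.~1.4]{Gupta-Krishna-BF}, noting as you do that $\pi^{\adiv}_1(X,D)\to\pi^{\abk}_1(X,D)$ is an isomorphism for reduced $D$. The points you flag as potential obstacles (scope of the cited reciprocity theorems, compatibility with Frobenius on generators) are precisely what the paper delegates to those references.
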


    When $\Char(k) \neq 2$ in case (1),
    the theorem was claimed by Barrientos (see \cite[Thm.~ 7.3]{Barrientos}).
    For the proof,  Barrientos only refers to the (highly intricate) arguments
    of Kerz-Saito \cite{Kerz-Saito-Duke} in the non-log case.
    Note that the canonical map $\pi^{\adiv}_1(X,D) \to \pi^{\abk}_1(X,D)$
    is an isomorphism under the assumption of the corollary. This
    follows directly from definitions.

\subsubsection{Roitman's theorem for Suslin homology}\label{sec:RSH}
Assume that $k$ is algebraically closed. Let $(X,D)$ be a reduced modulus pair
over $k$ with $U = X \setminus D$. Let $\Alb(U)$ denote the generalized
Albanese variety of $U$, introduced by Serre \cite{Serre}. This is universal for
morphisms from $U$ to semi-abelian varieties. There is an Albanese homomorphism
${\alb}_U \colon H^S_0(U)_0 \to \Alb(U)(k)$, where $H^S_0(U)_0$ is the kernel of
the push-forward map $H^S_0(U)_0 \to H^S_0(\pi_0(U))$.
A famous theorem of Roitman \cite{Roitman}
says that if $U$ is projective (i.e., $D = \emptyset$ in our
set-up), then ${\alb}_U$ induces an isomorphism between the torsion subgroups, away from
$\Char(k)$. The latter condition was subsequently removed by Milne \cite{Milne}. 

Spie{\ss} and Szamuely \cite{SS} showed that, away from $\Char(k)$,
${\alb}_U$ induces an isomorphism between the torsion subgroups even if $D \neq \emptyset$.
Geisser \cite[Thm.~1.1]{Geisser} showed that the condition imposed by Spie{\ss}-Szamuely could be removed if
one assumed resolution of singularities. Recently, Ghosh-Krishna
\cite[Thm.~1.7]{Ghosh-Krishna-CFT}
showed that Geisser's condition could be eliminated. But their proof is long and
intricate. Using Theorem \ref{thm:Main}, we can give a very quick proof (see
\S~\ref{sec:LW-pf}) of (the unconditional version of)
the torsion theorem of Spie{\ss}-Szamuely
in positive characteristic. The result is the following.

\begin{thm}\label{thm:RTT}
  Let $(X,D)$ be a reduced modulus pair over $k$ and $U = X \setminus D$. Then the
  Albanese map for $U$ induces an isomorphism
  \[
    \alb_U \colon   H^S_0(U)_\tor \xrightarrow{\cong} \Alb(U)(k)_\tor.
  \]
\end{thm}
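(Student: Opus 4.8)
The plan is to reduce Theorem~\ref{thm:RTT} to the already-known torsion statement for Chow groups of zero-cycles with modulus, using the identification $\phi_{X|D}$ of Theorem~\ref{thm:Main}. The key point is that Theorem~\ref{thm:RTT} asserts the isomorphism over an \emph{arbitrary} algebraically closed field $k$; since the characteristic zero case follows from Geisser \cite[Thm.~1.1]{Geisser} (resolution of singularities being available), it suffices to treat $k$ algebraically closed of positive characteristic. In that case condition (3) of Theorem~\ref{thm:Main} applies, so $\phi_{X|D}\colon \CH_0(X|D) \xrightarrow{\cong} H^S_0(U)$ is an isomorphism, and it restricts to an isomorphism on torsion subgroups $\CH_0(X|D)_\tor \xrightarrow{\cong} H^S_0(U)_\tor$.

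Next I would recall the Albanese-with-modulus picture. There is an Albanese homomorphism for the modulus pair, $\CH_0(X|D)_0 \to \Alb(X|D)(k)$, and a compatible identification relating $\Alb(X|D)$ to Serre's generalized Albanese $\Alb(U)$ of the open variety $U$; the map $\phi_{X|D}$ intertwines the two Albanese maps, so that $\alb_U \circ \phi_{X|D}$ agrees (up to the canonical identification of target semi-abelian varieties) with the Albanese map out of $\CH_0(X|D)_0$. This reduces the claim to: the Albanese map $\CH_0(X|D)_\tor \to \Alb(U)(k)_\tor$ is an isomorphism. But this is exactly the content of the Roitman-type theorem for zero-cycles with modulus proved by Binda-Krishna \cite{Binda-Krishna-21} (resp.\ the related statements of \cite{Ghosh-Krishna-CFT} giving injectivity and surjectivity on torsion), which holds unconditionally in positive characteristic. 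Combining these, $\alb_U$ is an isomorphism on torsion, proving Theorem~\ref{thm:RTT}.

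More carefully, the chain of maps to assemble is
\[
  H^S_0(U)_\tor \xleftarrow[\cong]{\phi_{X|D}} \CH_0(X|D)_\tor \xrightarrow{\alb_{X|D}} \Alb(X|D)(k)_\tor \xrightarrow[\cong]{\can} \Alb(U)(k)_\tor,
\]
where the left arrow is an isomorphism by Theorem~\ref{thm:Main}(3), the middle arrow is an isomorphism by the Roitman theorem with modulus, and the right arrow is the canonical comparison of Albanese varieties (an isomorphism because for a reduced modulus pair the Albanese with modulus recovers Serre's Albanese of the complement). One then checks that the composite is $\alb_U$ on torsion — this compatibility is the commutativity of a square relating the cycle-class maps and the two Albanese morphisms, which follows from the functoriality of the Albanese construction and the explicit description of $\phi_{X|D}$ on closed points (both maps send a closed point $x \in U$ to its image under the canonical map $U \to \Alb(U)$, after translating to the degree-zero part).

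The main obstacle I anticipate is not any single hard estimate but rather verifying the compatibility of the three constructions — that $\phi_{X|D}$ really does identify the Albanese-with-modulus map with $\alb_U$ under the canonical isomorphism $\Alb(X|D) \cong \Alb(U)$. This requires knowing that the generalized Albanese of a reduced modulus pair $(X,D)$ coincides with Serre's generalized Albanese of $U = X \setminus D$ in a way compatible with cycle classes; for reduced $D$ this should be in the literature (e.g.\ via the universal property characterizing both as universal for maps to semi-abelian varieties), but one must be careful that the modulus condition for reduced $D$ imposes no extra constraint beyond regularity on $U$. Once this identification is in hand, the theorem is immediate from Theorem~\ref{thm:Main} and the known torsion theorem for $\CH_0(X|D)$; the novelty here is purely that Theorem~\ref{thm:Main} lets one import the homotopy-invariant (Suslin) statement from the cycle-with-modulus statement with no further work, which is why the proof in \S~\ref{sec:LW-pf} is so short.
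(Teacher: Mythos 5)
Your proposal is correct and follows essentially the same route as the paper: reduce to $k$ algebraically closed of positive characteristic (the paper quotes Spie{\ss}--Szamuely for torsion prime to the characteristic rather than Geisser, but the reduction is the same), then use the commutative square relating $\phi_{X|D}$, the Albanese map with modulus $\alb_{X|D}\colon \CH_0(X|D)_0 \to J^d(X|D)(k)$ (an isomorphism on torsion by the Roitman theorem with modulus, \cite[Thm.~6.7]{Krishna-torsion}), and the comparison isomorphism $J^d(X|D)(k) \xrightarrow{\cong} \Alb(U)(k)$ for reduced $D$ (\cite[Thm.~3.2]{Binda-Krishna-21}), together with Theorem~\ref{thm:Main} for the top arrow. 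The compatibility you flag as the main obstacle is exactly the commutativity of the paper's diagram ~\eqref{eqn:doubl-ex-seq-*-0}, which holds for the reasons you indicate.
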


\subsubsection{Motivic cohomology of normal crossing schemes}\label{sec:MCSNC}
Let $k$ be a field and let $X$  be a reduced quasi-projective $k$-scheme. Let $H^m(X, \Z(n))$
denote the Friedlander-Voevodsky motivic cohomology of $X$ (see \S~\ref{sec:MC*}).
This is an abstractly defined cohomology theory for $X$
which is homotopy-invariant. If $X$ is smooth over $k$
of pure dimension $d$, then it is well known that there is a canonical isomorphism
$\CH_0(X) \xrightarrow{\cong} H^{2d}(X, \Z(d))$. This is a special case of a more general
result of Voevodsky,  that identifies  the motivic cohomology groups of smooth schemes over any field  with the higher Chow groups as defined by  Bloch. See
\cite[Corollary 2]{VoevodskyHigherChow}.

As one knows, the cohomological analogue of $\CH_0(X)$ is
the Levine-Weibel Chow group $\CH^{LW}_0(X)$ when $X$ is singular.
We let $\Lambda$ be a commutative ring which is $\Z$ if $k$ admits resolution of
singularities and is any $\Z[\tfrac{1}{p}]$-algebra if $\Char(k) = p > 0$.
The following is an open question in the theory of algebraic cycles.

\begin{ques}\label{ques:Open}
  Let $X$ be a seminormal{\footnote{If
    one wants to replace $\Lambda$ by $\Z$, then one should replace seminormal by
    weakly normal.}} quasi-projective $k$-scheme of pure dimension $d$. Is there a
  canonical isomorphism
  \[
    \CH^{LW}_0(X)_\Lambda \to H^{2d}(X, \Lambda(d))?
  \]
  \end{ques}

We do not know if this question may have a positive answer. We can however prove the
  following result using Theorem \ref{thm:Main}.

Let $\CH^{\lci}_0(X)$ denote the lci version of the Levine-Weibel Chow group of $X$ as
defined in \cite[\S~3]{Binda-Krishna} (see \S~\ref{sec:LWC}).
This is a modified form of $\CH^{LW}_0(X)$ with better functorial properties.
As another application of Theorem \ref{thm:Main}, we can prove the following result with
regard to the above question.

\begin{thm}\label{thm:LW-MC}
  Let $k$ be any field and $X$ a reduced
  quasi-projective scheme of pure dimension $d$ over $k$. Then the following hold.
  \begin{enumerate}
    \item
  There exists a canonical  homomorphism
  \[
    \lambda_X \colon \CH^{\lci}_0(X) \to H^{2d}(X, \Z(d)).
  \]
\item
  $\lambda_X$ is surjective with $\Lambda$-coefficients if $X$ is projective and
  the regular locus of $X$ is smooth over $k$.
\item
  $\lambda_X$ is an isomorphism with $\Lambda$-coefficients if $X$ is a projective normal
  crossing scheme over $k$.
  \end{enumerate}
\end{thm}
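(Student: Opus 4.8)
The plan is to construct the cycle map in (1), deduce the surjectivity in (2) by a pushforward along a resolution or an alteration, and reduce the isomorphism in (3) to Theorem~\ref{thm:Main} by a descent argument along the normalization. Recall that $\CH^{\lci}_0(X)$ is the quotient of the free abelian group on the closed points of $X_{\reg}$ by the divisors $\divf_C(f)$ of rational functions $f\in k(C)^\times$ on reduced, purely $1$-dimensional $C\subset X$ that are local complete intersections in $X$, meet $X_{\sing}$ in finitely many points, and along which $f$ is a unit at $C\cap X_{\sing}$. For $x\in X_{\reg}$ the immersion $\Spec k(x)\hookrightarrow X$ is a regular immersion of codimension $d$ (since $\cO_{X,x}$ is regular), so the lci Gysin calculus in $\DM$ supplies a pushforward $\Z=H^0(\Spec k(x),\Z(0))\to H^{2d}(X,\Z(d))$, and I would set $\lambda_X([x])$ to be the image of $1$. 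To see that $\lambda_X$ kills the relations one uses the lci Gysin pushforward of $C\hookrightarrow X$ of codimension $d-1$: it is compatible with the pushforwards of the closed points of $C$, and sends the class of $\divf_C(f)$ in $\Pic(C)=H^2(C,\Z(1))$ — which, $f$ being a unit along $C\cap X_{\sing}$, agrees with the class of the principal Cartier divisor of $f$ and so vanishes — to $0$. By construction $\lambda_X$ recovers the classical isomorphism $\CH_0(X)\xrightarrow{\cong}H^{2d}(X,\Z(d))$ when $X$ is smooth. I expect this part to be essentially formal once the lci Gysin formalism is in place, and close to constructions already in \cite{Binda-Krishna}.

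For (2), with $X$ projective and $X_{\reg}$ smooth over $k$, pick a smooth projective $Y$ and a proper, surjective, generically finite $f\colon Y\to X$ — a resolution if $\Char k=0$, a de Jong alteration if $\Char k=p>0$ (its degree being harmless after inverting $p$). Since $X$ and $Y$ are projective, $H^{2d}(-,\Lambda(d))=H^{2d}_c(-,\Lambda(d))$, and proper pushforward in compactly supported motivic cohomology gives $f_*\colon\CH_0(Y)_\Lambda=H^{2d}(Y,\Lambda(d))\to H^{2d}(X,\Lambda(d))$ with $f_*f^*$ multiplication by $\deg f$, hence $f_*$ surjective with $\Lambda$-coefficients. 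Moving a zero-cycle on $Y$ into the locus where $f$ is finite flat over $X_{\reg}$, $f_*$ carries it to a cycle supported on $X_{\reg}$, and I would check as before that this descends to $f_*\colon\CH_0(Y)_\Lambda\to\CH^{\lci}_0(X)_\Lambda$ with $\lambda_X\circ f_*$ the motivic pushforward; surjectivity of $\lambda_X\otimes\Lambda$ follows.

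For (3), write $X=X_1\cup\dots\cup X_r$ with the $X_i$ smooth projective of dimension $d$ and double loci $D_i\subset X_i$ reduced simple normal crossing divisors, so $X_{\reg}=\coprod_i(X_i\setminus D_i)$ and $X_{\sing}=\bigcup_{i<j}(X_i\cap X_j)$ has dimension $d-1$. Since $X$ is projective, the localization sequence for $X_{\reg}\hookrightarrow X\hookleftarrow X_{\sing}$ in compactly supported motivic cohomology, together with the vanishing $H^{2d}(X_{\sing},\Lambda(d))=0$ — which I would obtain from cdh-descent along the hyperenvelope of $X_{\sing}$ by its smooth multiple loci, whose dimensions drop fast enough that the groups $\CH^d(V,n)$ with $\dim V+n<d$ kill all the relevant contributions — gives an exact sequence
\[
H^{2d-1}(X_{\sing},\Lambda(d))\xrightarrow{\ \partial\ }\bigoplus_i H^S_0(X_i\setminus D_i)_\Lambda\longrightarrow H^{2d}(X,\Lambda(d))\longrightarrow 0 ,
\]
and $H^S_0(X_i\setminus D_i)\cong\CH_0(X_i|D_i)$ by Theorem~\ref{thm:Main}(1), the $D_i$ being simple normal crossing on the smooth $X_i$. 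On the cycle side I would show that the rational equivalence induced on $0$-cycles of $X_i\setminus D_i$ by the curves allowed in $\CH^{\lci}_0(X)$ (curves inside $X_i$, and linking curves having a branch in $X_i$, all units along $D_i$) is exactly the one defining Russell's relative Chow group $\CH^{\Rus}_0(X_i|D_i)$, which by \S~\ref{sec:RCG} equals $H^S_0(X_i\setminus D_i)$; thus, modulo the relations coming from linking curves meeting $X_{\sing}$, $\CH^{\lci}_0(X)_\Lambda$ is expressed through $\bigoplus_i H^S_0(X_i\setminus D_i)_\Lambda$ in the same way as $H^{2d}(X,\Lambda(d))$. An induction on $r$ and on $d$ — the latter applied to the normal crossing scheme $X_{\sing}$ — matches the two boundary terms, and compatibility of $\lambda_X$ with all the maps then forces $\lambda_X\otimes\Lambda$ to be an isomorphism.

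The main difficulty I anticipate is the cycle-theoretic half of (3): constructing the descent for $\CH^{\lci}_0$ of a normal crossing scheme in terms of the modulus Chow groups of its components and of its singular strata, establishing the vanishing statements on $X_{\sing}$, and matching the boundary maps on both sides compatibly with $\lambda_X$. Everything rests on being able to feed Theorem~\ref{thm:Main}(1) into this comparison — which is precisely why the simple normal crossing hypothesis on the double loci $D_i$ is what is needed, and why it is case~(1) of Theorem~\ref{thm:Main}, valid over an arbitrary field, that is the relevant input.
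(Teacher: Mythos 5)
Your part (1) follows the paper's route (Gysin classes of regular closed points, then the lci Gysin pushforward along a good curve to reduce to $d=1$), though your treatment of the curve case is shakier than you suggest: for a singular curve $C$ the group $H^2(C,\Z(1))$ is $H^1_{\cdh}(C,\sO_C^\times)$, not $\Pic(C)$ (they differ already for a cuspidal cubic), so "$\divf_C(f)$ is a principal Cartier divisor, hence zero in $\Pic(C)$" does not literally apply; the paper instead invokes the slice spectral sequence for singular curves from \cite{Krishna-Pelaez}. Your argument can be repaired, since a principal Cartier divisor still dies under $\Pic(C)\to H^1_{\cdh}(C,\sO_C^\times)$, but the compatibility of the point-by-point Gysin map with the Cartier divisor class needs to be checked rather than asserted. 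For part (2) your alteration argument is genuinely different from the paper's (which simply combines the surjection $H^{2d}_c(X_\reg,\Lambda(d))\surj H^{2d}(X,\Lambda(d))$ with the identification $H^S_0(X_\reg)_\Lambda\cong H^{2d}_c(X_\reg,\Lambda(d))$, the latter being exactly where smoothness of $X_\reg$ enters); yours can be made to work, but only by citing $p$-alterations of controlled degree (a power of $p$, so invertible in $\Lambda$) and dealing with regular-versus-smooth over imperfect $k$, none of which the paper needs.

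The genuine gap is in part (3), and it is precisely the step you defer as "the main difficulty I anticipate". Your claim that the rational equivalence induced on $\sZ_0(X_i\setminus D_i)$ by the curves allowed in $\CH^{\lci}_0(X)$ is "exactly" the one defining $H^S_0(X_i\setminus D_i)$ is false as stated: an lci relation coming from a curve inside a single component only requires $f$ to be a \emph{unit} along $X_\sing$, whereas the Suslin/Russell relations require $f\equiv 1$ on the reduced pullback of $D_i$; the former are a priori strictly larger, and reconciling them with the boundary term $\partial$ coming from units on $X_\sing$ is the entire content of the comparison. The paper resolves this by interposing the Esnault--Kerz--Wittenberg group $\CH^{EKW}_0(X)$: Proposition~\ref{prop:EKW-inv} identifies $H^{2d}(X,\Lambda(d))$ with $\CH^{EKW}_0(X)_\Lambda$ (this is where the localization/boundary analysis you sketch is actually carried out, together with a perfection argument via Corollary~\ref{cor:EK} for imperfect $k$), and then one only has to kill the two types of EKW relations in $\CH^{\lci}_0(X)$: type (2) relations come from snc subcurves, which are Cartier curves; type (1) relations are Suslin relations on a single component, which die in $\CH_0(X_i|D_i)$ by Theorem~\ref{thm:Main}(1) and are then pushed into $\CH^{\lci}_0(X)$ by the nontrivial Lemma~\ref{lem:CH-mod-lci}. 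Neither this factorization lemma nor any substitute for it appears in your proposal, and without it the "descent for $\CH^{\lci}_0$ of a normal crossing scheme" you invoke has no construction. You also omit the finite-field case entirely: Lemma~\ref{lem:CH-mod-lci} requires $k$ infinite, and the paper must run a separate pro-$\ell$/pro-$p$ norm argument to get injectivity of $\lambda_X$ over finite fields.
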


The last part of Theorem~\ref{thm:LW-MC} was earlier shown in
\cite[Thm.~1.6]{Binda-Krishna-21} if one assumes
that $\CH^{LW}_0(X)_\Lambda \cong \CH^{\lci}_0(X)_\Lambda$ and one of the following holds.
\begin{enumerate}
\item
  $k$ is infinite and perfect of positive characteristic.
  \item
${\rm char}(k) = 0$ and $\Lambda = {\Z}/m, m \neq 0$.
\end{enumerate}
The assumption $\CH^{LW}_0(X)_\Lambda \cong \CH^{\lci}_0(X)_\Lambda$ is usually very hard to check, even though it is unavoidable in \cite{Binda-Krishna-21}. Note that it is automatically satisfied if e.g. ${\rm char}(k)=0$ and $k$ is algebraically closed. We refer the reader to \cite[Lemma 8.1]{Binda-Krishna-21} and the references in \emph{loc. cit.} for a more detailed comparison.

\subsubsection{A question of Barbieri-Viale and Kahn}
Let $k$ be an algebraically closed field of characteristic zero. As an application of the comparison between the Levine-Weibel Chow group of zero cycles and the $(2d,d)$ motivic cohomology group, we can give a positive answer to a question posed by Barbieri-Viale and Kahn in \cite{BVKahn}. This can be interpreted as a comparison between the  Roitman theorem for the cdh-motivic cohomology, proved in \cite{BVKahn}, and the more classical Roitman theorem for singular projective varieties in characteristic zero, proved in \cite{BiswasSrinivas}. 

\begin{thm}\label{thm:BVKahnintro} Let $k$ be an algebraically closed field  of characteristic zero and let $X$ be a reduced projective $k$-scheme of pure dimension $d$. Then the morphism
    \[\lambda_X\colon \CH_0^{LW}(X) \to H^{2d}(X, \Z(d))\] is surjective with uniquely divisible kernel, and  there is a commutative diagram 
    \[\begin{tikzcd}
  \CH_0^{LW}(X)_{\rm tors} \arrow[r, "\lambda_X"] \arrow[d, "a^+"]& H^{2d}(X, \Z(d))_{\rm tors} \arrow[d, "u"] \\
   \Alb^+(X)(k)_{\rm tors} \arrow[r] & \mathbf{L}_1\Alb^*(X)(k)_{\rm tors},
    \end{tikzcd}
    \]
    where all the arrows are isomorphisms. Here, $\Alb^+(X)(k)$ is the universal semi-abelian regular quotient of $\CH_0^{\lci}(X)$, and $\mathbf{L}_1\Alb^*(X)(k)$ is the semi-abelian part of the $1$-motive ${\rm LAlb}(M(X)^*(d)[2d])$.
\end{thm}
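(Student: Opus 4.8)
The plan is to deduce the statement from Theorem~\ref{thm:LW-MC} together with two Roitman-type theorems already in the literature: the classical Roitman theorem for zero-cycles on a singular projective variety over an algebraically closed field of characteristic zero, due to Biswas--Srinivas \cite{BiswasSrinivas} (resting on the universal regular quotient of Esnault, Srinivas and Viehweg), and the Roitman theorem for cdh-motivic cohomology of Barbieri-Viale and Kahn \cite{BVKahn}. The map $\lambda_X$ of Theorem~\ref{thm:LW-MC}, together with a comparison of the two Albanese receptacles occurring on the two sides, will be the bridge between these statements.

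First I would exploit the hypotheses on $k$. Since $k$ is algebraically closed of characteristic zero it is perfect and admits resolution of singularities, so the regular locus of $X$ is smooth over $k$ and the coefficient ring $\Lambda$ of Theorem~\ref{thm:LW-MC} may be taken to be $\Z$; hence part~(2) of that theorem already gives that $\lambda_X\colon \CH_0^{\lci}(X)\to H^{2d}(X,\Z(d))$ is surjective. Over such a field $\CH_0^{LW}(X)$ and $\CH_0^{\lci}(X)$ are moreover canonically isomorphic (see the discussion following Theorem~\ref{thm:LW-MC}), so I identify the two; in particular $\Alb^+(X)$, the universal semi-abelian regular quotient of $\CH_0^{\lci}(X)$, is identified with the Esnault--Srinivas--Viehweg universal regular quotient of $\CH_0^{LW}(X)_0$, and Biswas--Srinivas' theorem reads: $a^+\colon \CH_0^{\lci}(X)\to \Alb^+(X)(k)$ is surjective with uniquely divisible kernel $K_1$ (in particular an isomorphism on torsion subgroups). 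On the other side, since $\Char(k)=0$ the Friedlander--Voevodsky group $H^{2d}(X,\Z(d))$ coincides with the cdh-motivic cohomology used in \cite{BVKahn}, whose Roitman theorem furnishes a surjection $u\colon H^{2d}(X,\Z(d))\to \mathbf{L}_1\Alb^*(X)(k)$ with uniquely divisible kernel $K_2$. (Throughout one works with degree-zero subgroups, to which $\ker\lambda_X$ and all the torsion groups below belong; I suppress this from the notation.)

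Next I would build the comparison square. By the universal property of $\Alb^+(X)$, the composite $\CH_0^{\lci}(X)\xrightarrow{\lambda_X}H^{2d}(X,\Z(d))\xrightarrow{u}\mathbf{L}_1\Alb^*(X)(k)$ --- whose target is the group of $k$-points of a semi-abelian variety --- factors uniquely through $a^+$, yielding a morphism of semi-abelian varieties $c\colon \Alb^+(X)\to \mathbf{L}_1\Alb^*(X)$ with $u\circ\lambda_X=c\circ a^+$. The decisive input, and the step I expect to require the real work, is that $c$ is an isomorphism: this is the comparison --- going back to \cite{BVKahn} and to the identification of the semi-abelian part of ${\rm LAlb}(M(X)^*(d)[2d])$ with the classical universal regular quotient of the Chow group of zero-cycles --- combined with the identification $\CH_0^{LW}(X)\cong\CH_0^{\lci}(X)$ above. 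Granting it, $\lambda_X$ maps $K_1$ into $K_2$ and one has a morphism of short exact sequences
\[
\begin{tikzcd}[column sep=small]
0\arrow[r]&K_1\arrow[r]\arrow[d,"\bar\lambda"']&\CH_0^{\lci}(X)\arrow[r,"a^+"]\arrow[d,"\lambda_X"]&\Alb^+(X)(k)\arrow[r]\arrow[d,"c","\cong"']&0\\
0\arrow[r]&K_2\arrow[r]&H^{2d}(X,\Z(d))\arrow[r,"u"']&\mathbf{L}_1\Alb^*(X)(k)\arrow[r]&0.
\end{tikzcd}
\]

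Finally I would apply the snake lemma. Because $c$ is an isomorphism and $\lambda_X$ is surjective, the snake sequence shows that $\bar\lambda\colon K_1\to K_2$ is surjective with $\ker\bar\lambda\cong\ker\lambda_X$; since $K_1$ and $K_2$ are torsion-free divisible groups, i.e.\ $\Q$-vector spaces, and a group homomorphism of $\Q$-vector spaces is $\Q$-linear, $\ker\bar\lambda$ is again a $\Q$-vector space, whence $\ker\lambda_X$ is uniquely divisible --- the first assertion. For the torsion square, torsion-freeness of $K_1$ and $K_2$ forces $a^+$ and $u$ to be isomorphisms on torsion subgroups, and then the relation $u\circ\lambda_X=c\circ a^+$ with $c$ an isomorphism forces $\lambda_X$ to be an isomorphism on torsion as well; taking the bottom arrow $\Alb^+(X)(k)_{\rm tors}\to\mathbf{L}_1\Alb^*(X)(k)_{\rm tors}$ to be the restriction of $c$ yields the commutative square of isomorphisms of the theorem. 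As indicated, the one genuinely non-formal point is verifying that $c$ is an isomorphism, i.e.\ the comparison of the $1$-motivic Albanese $\mathbf{L}_1\Alb^*(X)$ with the Esnault--Srinivas--Viehweg universal regular quotient $\Alb^+(X)$; everything else is a diagram chase fed by the two quoted Roitman theorems and the surjectivity of $\lambda_X$ coming from Theorem~\ref{thm:LW-MC}.
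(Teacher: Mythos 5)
Your proposal is correct and follows essentially the same route as the paper: surjectivity of $\lambda_X$ from Theorem~\ref{thm:LW-MC}, the Biswas--Srinivas Roitman theorem for the left vertical arrow, the results of \cite{BVKahn} (their Roitman theorem in \S 13.7 and the identification $\Alb^+(X)\cong\mathbf{L}_1\Alb^*(X)$ of their Thm.~12.12.6) for the right vertical and bottom arrows, and a diagram chase for the unique divisibility of $\ker\lambda_X$. The only cosmetic differences are that the paper takes the canonical comparison map of \cite{BVKahn} as the bottom arrow and checks commutativity of the square directly on classes of closed points (rather than producing the bottom map via the universal property of $\Alb^+$, which would additionally require noting that $u\circ\lambda_X$ is a regular homomorphism and then still identifying the resulting map with the canonical one), and that it extracts unique divisibility of the kernel from divisibility of $\CH_0^{\lci}(X)_{\deg 0}$ together with the torsion isomorphism, rather than via your snake-lemma argument with the two uniquely divisible kernels.
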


\vskip .3cm

We end the discussion of our main result and its application with the following
larger question. Let $\Lambda$ be as in \S~\ref{sec:MCSNC}.

\begin{ques}\label{ques:Conj}
  Let $k$ be any field. Let $X$ be a smooth projective $k$-scheme of pure dimension $d$
  and let $D \subset X$ be a reduced simple normal crossing divisor. Let
  $U = X \setminus D$. Is there a canonical isomorphism
  \[
    \CH^m(X|D,n)_\Lambda \xrightarrow{\cong} H^{2m-n}_c(U, \Lambda(m))?
  \]
  \end{ques}

\vskip .3cm

 \subsection{Overview of proofs}\label{sec:Outline}
 We prove Theorem \ref{thm:Main} by induction on $\dim(X)$. This reduces the proof to the
 case when $X$ is a surface. The case of surfaces is the most delicate one and the main work
 goes into proving this case. The main steps are as follows.
 
 We use the decomposition theorem of \cite{GKR} as first of the key tools.
  This result
  provides an injective homomorphism $p_* \colon \CH_0(X|D) \to \CH^{\lci}_0(S_X)$,
  where $S_X$ is the double of $X$ along $D$. The proof of Theorem \ref{thm:Main} 
  is then essentially equivalent to showing that $p_*$ factors through the quotient
  $\CH_0(X|D) \surj H^S_0(U)$.
  The second step is to show that if we compose $p_*$ with
  the pull-back $\CH^{\lci}_0(S_X) \to \CH^{\lci}_0(S^{sn}_X)$, then $p_*$ does factor
  through $H^S_0(U)$, where $S^{sn}_X$ is the seminormalization of $S_X$.
  The third step is to show that this pull-back map is an isomorphism (under
  the given assumptions on $k$ and $D$). To show the latter, 
  we prove some results that compare Quillen's algebraic $K$-theory and Weibel's
  homotopy $KH$-theory for certain types of curves and surfaces.

  Most of the applications given above are immediate consequences of
  Theorem~\ref{thm:Main}, with the exception of Theorem~\ref{thm:LW-MC}.
  To prove Theorem~\ref{thm:LW-MC}, we proceed as follows.
We first construct the
map $\lambda_X$ using the Gysin maps for Chow group and motivic cohomology.
This reduces the construction to dimension one case which we
deduce using the slice spectral sequence for singular schemes from \cite{Krishna-Pelaez}.
  The key idea then is to replace $\CH^{\lci}_0(X)$ with a cycle group
  $\CH^{EKW}_0(X)$, introduced by Esnault-Kerz-Wittenberg \cite{EKW}.
This is possible, thanks to Theorem \ref{thm:Main}.
We then use a result of Cisinski-D{\'e}glise \cite{Cisinski-Deglise}
on the perfection properties of various cycle groups
to pass to a perfect base field. Theorem \ref{thm:LW-MC} then follows.

  In \S~\ref{sec:K-thry}, we collect the $K$-theoretic results that we need to prove
  Theorem \ref{thm:Main} for surfaces. In \S~\ref{sec:Pf-main}, we prove the key
  factorization lemma which allows us to conclude the proof.
  We also prove Theorem~\ref{thm:RTT} in this section.
  We prove Theorem \ref{thm:LW-MC} in \S~\ref{sec:NCS}.
  Finally, \S~\ref{sec:BVKahn} is dedicated to the proof of
  Theorem \ref{thm:BVKahnintro}.

\subsection{Notations}\label{sec:Notn}
Throughout this note, we fix a field $k$. A $k$-scheme will
mean a separated and essentially of finite type $k$-scheme.
We shall denote the category of such schemes by $\Sch_k$.
We shall let $\Sm_k$ be the subcategory of $\Sch_k$ consisting of smooth schemes over $k$.
If $X \in \Sch_k$ is reduced, we shall let $X^n$ (resp. $X^{sn}$)
denote the normalization (resp. seminormalization) of $X$.

Recall that for $A$ a reduced commutative Noetherian ring and $B$ a subring of the integral closure of $A$ in its ring of total quotients, which is finite as $A$-module, we say that an ideal $I\subset A$ is a conducting ideal for the inclusion $A\subset B$ if $I=IB$.  More generally, if $f\colon X'\to X$ is a finite birational map, a closed subscheme $Y$ of $X$ is called a conducting subscheme for $f$ if the sheaf of ideals $\mathcal{I}_Y \subset \cO_X$ is a sheaf of conducting ideals for the inclusion of sheaves of rings $\cO_X\to f_* (\cO_{X'})$. 
We shall let $k(X)$ denote the total ring of quotients of $X$.
For a morphism $f \colon X' \to X$ of $k$-schemes and
$D \subset X$ a subscheme, we shall write $D \times_X X'$ as $f^*(D)$.
If $Y, Z \subset X$ are two closed subschemes, then $Y \cap Z$ will mean
the scheme theoretic intersection $Y \times_X Z$ unless we say otherwise.
We shall let $\sZ_0(X)$ denote the free abelian group on the set of closed points on
$X$.

\vskip .4cm

\noindent\emph{Acknowledgements.}
The authors would like to thank Bruno Kahn for providing some comments on a preliminary version of this manuscript. They would also like to thank the referee for
reading the manuscript carefully and providing helpful suggestions.

\section{Algebraic and homotopy $K$-groups of a double}
\label{sec:K-thry}
The goal of this section is to show that if $S_X$ is the double of a regular
surface $X$ over a field along a reduced Cartier divisor, then $SK_0(S_X)$
coincides with an analogous subgroup of $KH_0(S_X)$ under some necessary conditions on
$k$ and $D$. We shall begin by recollecting necessary concepts.
We shall then prove some preliminary $K$-theoretic results before reaching the goal.
We fix a field $k$ throughout this section.

\subsection{Review of double along a divisor}\label{sec:Double}
Let $X \in \Sch_k$ be a regular scheme and let $D \subset X$ be an effective Cartier
divisor. Recall from \cite[\S~2.1]{Binda-Krishna}
  that the double of $X$ along $D$ is the push-out $S_X:= X \amalg_D X$. One knows that
  \begin{equation}\label{eqn:Double-0}
    \xymatrix@C.8pc{
      D \ar[r]^-{\iota} \ar[d]_-{\iota} & X_+ \ar[d]^-{\iota_+} \\
      X_- \ar[r]^-{\iota_-} & S_X}
  \end{equation}
  is a bi-Cartesian square. Moreover, there is a finite and flat morphism
  $\Delta \colon S_X \to X$ whose composition with $\iota_\pm$ are identity.
  $S_X$ is a reduced Cohen-Macaulay scheme with two irreducible components $X_\pm$
  and its normalization $S^n_X$ is canonically isomorphic to $X_+ \amalg X_-$ (see again \cite[Prop. 2.4]{Binda-Krishna}).
  For the normalization morphism $\pi \colon S^n_X \to S_X$, the smallest conductor
  subscheme inside $S_X$ is $D$ whose inverse image in $S^n_X$ is $D \amalg D$.
  It follows that $S_X$ is a seminormal scheme if $D$ is reduced (see
  \cite[Prop.~4.2]{Krishna-torsion}). 

  If $C_\pm$ are two closed subschemes of $X$ not contained in $D$
  such that $C_+ \cap D = C_- \cap D$ as
  closed subschemes, then the join $C_+ \amalg_D C_-$ along $D \cap C_\pm$ is
  canonically a closed subscheme of $S_X$. If $\nu \colon C \inj X$ is a regular closed
  immersion whose image is not contained in $D$, then the double of $C$ along $C \cap D$
  (which we shall also denote by $S_C$) has the property that
  the inclusion $\nu' \colon S_C \inj S_X$ is also a regular closed
  immersion (again by \cite[Prop.~2.4(5)]{Binda-Krishna}). We shall use this fact
  often in this paper.

\subsection{Review of homotopy $K$-theory}\label{sec:KKH}
  Recall (e.g., see \cite[\S~5]{Krishna-Ravi}) that the homotopy $K$-theory
  spectrum (introduced by Weibel \cite{Weibel-KH})
  of a $k$-scheme is defined as the homotopy colimit spectrum
  $KH(X) = \hocolim_n K(X \times \Delta_n)$, where $\Delta_\bullet$ is the standard
  cosimplicial scheme defined by setting $\Delta_n = \Spec({k[t_0, \ldots , t_n]}/{(\sum_i t_i - 1)})$.
  There is a natural transformation between the presheaves of $S^1$-spectra
  $K(X) \to KH(X)$ on $\Sch_k$, which is a weak equivalence if $X$ is regular.
  Furthermore, if $f \colon X' \to X$ is a proper local
  complete intersection morphism (or, more generally,  a morphism of finite Tor-dimension),
  then so is $f \times {\rm id} \colon X' \times \Delta_\bullet
  \to X \times \Delta_\bullet$. 
  It follows from \cite[Prop.~3.18]{TT} that there is a push-forward map
  between the simplicial spectra $f_* \colon K(X' \times \Delta_\bullet) \to
  K(X \times \Delta_\bullet)$. Taking the homotopy colimits, we see that there
  is a push-forward map $f_* \colon KH(X') \to KH(X)$. This map satisfies usual properties
  such as the composition law and commutativity with pull-back.

Let $\tau \colon (\Sch_k)_\cdh \to (\Sch_k)_\zar$ be the canonical morphism of sites, where
  $(\Sch_k)_\cdh$ denotes the category $\Sch_k$ equipped with the cdh topology
  (e.g., see \cite[Chap.~12]{MVW}). 
Since $KH(X)$ is homotopy equivalent to the cdh-fibrant replacement of the spectrum $K(X)$
(see \cite{Cisinski}, \cite{Haesemeyer}), there is a commutative diagram of
strongly convergent spectral sequences
\begin{equation}\label{eqn:SS}
  \xymatrix@C1pc{
 E^{p,q}_2 = H^p_{\zar}(X, \sK_{q,X}) \ar[d] &  \Rightarrow & K_{q-p}(X) \ar[d] \\
~'E^{p,q}_2 = H^p_{\cdh}(X, \sK_{q,X}) & \Rightarrow & KH_{q-p}(X),}
      \end{equation}
      where the top one is the Zariski descent spectral sequence due to Thomason-Trobaugh
      \cite[Thm.~10.3]{TT}.

      Let us describe the edge homomorphisms of these spectral sequences in low degrees.
      First,  there is a natural map ${\rm rk} \colon KH_0(X) \to H^0(X, \Z)$ whose composition
      with $K_0(X) \to KH_0(X)$ is the (classically defined) rank map. We let $\wt{K}_0(X)$ and $\wt{KH}_0(X)$
      denote the  respective kernels. Using the above spectral sequences again, we get a natural
      map ${\rm det} \colon \wt{KH}_0(X) \to H^1_\cdh(X, \sO^{\times}_X)$, which is
      surjective
      if $\dim(X) \le 2$. We let $SKH_0(X)$ denote its kernel. We let
      $SK_0(X)$ be the kernel of the (surjective) determinant map ${\rm det} \colon \wt{K}_0(X) \surj
H^1_\zar(X, \sO^{\times}_X) = \Pic(X)$ (\cite[Thm. II.8.1]{Weibel}).

Applying the above spectral sequences to $KH_1(X)$, we get an edge map
$KH_1(X) \to  H^0_\cdh(X, \sO^{\times}_X)$. We let $SKH_1(X)$ denote its kernel.
Similarly, we let $SK_1(X)$ denote the kernel of the edge map
$K_1(X) \to H^0_\zar(X, \sO^{\times}_X)$. 
Let $X^{sn} \to X$ denote the seminormalization morphism when $X$ is reduced
(see \cite[\S~4.1]{Krishna-torsion}). 

\begin{lem}\label{lem:K1-split}
  Let $X \in \Sch_k$ be a reduced scheme. Then we have the following.
\begin{enumerate}
  \item
    The canonical map
  $H^0_\zar(X, \sO^{\times}_X) \to H^0_\cdh(X, \sO^{\times}_X)$ has a factorization
  \[
    \xymatrix@C1pc{
      H^0_\zar(X, \sO^{\times}_X) \ar[r] \ar[d] &
      H^0_\zar(X^{sn}, \sO^{\times}_{X^{sn}}) \ar[d]^-{\cong} \ar@{.>}[dl] \\
      H^0_\cdh(X, \sO^{\times}_X) \ar[r]^-{\cong} &  H^0_\cdh(X^{sn}, \sO^{\times}_{X^{sn}}),}
  \]
  where the horizontal arrows are induced by the projection $X^{sn} \to X$ and
  the vertical arrows are induced by the change of topology.
  Moreover, the bottom horizontal and the right vertical arrows are isomorphisms.

\item
  There is a commutative diagram of short exact sequences
  \begin{equation}\label{eqn:K1-split-0}
    \xymatrix@C1pc{
      0 \ar[r] & SK_1(X) \ar[d] \ar[r] & K_1(X) \ar[r] \ar[d] & H^0_\zar(X, \sO^{\times}_X)
      \ar[r] \ar[d] & 0 \\
   0 \ar[r] & SKH_1(X) \ar[r] & KH_1(X) \ar[r] & H^0_\cdh(X, \sO^{\times}_X)
      \ar[r] & 0,}
  \end{equation}
  which split functorially in $X$.
  \end{enumerate}
\end{lem}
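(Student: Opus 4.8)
The plan is to prove both parts together, since part (2) will follow from part (1) by standard facts about the low-degree edge maps in the spectral sequence \eqref{eqn:SS}. I would begin with the right vertical arrow of the diagram in (1): the claim $H^0_\zar(X^{sn}, \sO^{\times}_{X^{sn}}) \xrightarrow{\cong} H^0_\cdh(X^{sn}, \sO^{\times}_{X^{sn}})$ for $X^{sn}$ reduced and seminormal. This is where the hypothesis of seminormality is used, and it is the conceptual core of the lemma. The point is that for a seminormal scheme the units do not change under cdh-sheafification: one uses that cdh-covers are built from Nisnevich covers and (closed) abstract blow-up squares, that $\sO^\times$ already satisfies Nisnevich descent, and that for an abstract blow-up square the Mayer--Vietoris-type comparison of units reduces to the seminormality condition $H^0(Z, \sO_Z^\times) \times_{H^0(E, \sO_E^\times)} H^0(Y, \sO_Y^\times) = H^0(X, \sO_X^\times)$, which is precisely (equivalent to) the statement that $X$ is seminormal. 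I would cite or adapt the relevant statement from the literature (e.g.\ the cdh-local behavior of $\sO^\times$ discussed in work of Haesemeyer, Cortiñas--Haesemeyer--Schlichting--Weibel, or Krishna's paper on seminormality); this likely already appears in a form we can quote, so I would not redo the descent computation in detail.

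Next I would deduce the bottom horizontal arrow. Since seminormalization $X^{sn}\to X$ is a finite birational morphism that is an iso over a dense open, it induces an isomorphism of cdh-sheaves $\sO^\times$ (seminormalization is a cdh-local isomorphism on the structure sheaf up to nilpotents, and hence on units after the fact that cdh-locally any reduced scheme is its own seminormalization). Concretely, $H^0_\cdh(X, \sO_X^\times) \cong H^0_\cdh(X^{sn}, \sO_{X^{sn}}^\times)$ because the cdh-sheafification of $\sO^\times$ only depends on the reduced cdh-local structure, and $X$ and $X^{sn}$ become isomorphic after any cdh-cover refining the normalization. Combined with the previous paragraph this gives the bottom horizontal and right vertical isomorphisms, and then the dotted factorization exists simply because the composite $H^0_\zar(X, \sO_X^\times)\to H^0_\cdh(X,\sO_X^\times)\xleftarrow{\cong} H^0_\zar(X^{sn},\sO_{X^{sn}}^\times)$ defines the diagonal after inverting the right arrow; commutativity of the two triangles is then a diagram chase. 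The top horizontal arrow is just the pullback along $X^{sn}\to X$ on global units, and the left vertical is change of topology — both tautological.

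For part (2): the rows of \eqref{eqn:K1-split-0} are exact by definition of $SK_1$ and $SKH_1$ as kernels of the respective edge maps $K_1(X)\to H^0_\zar(X,\sO_X^\times)$ and $KH_1(X)\to H^0_\cdh(X,\sO_X^\times)$ from \eqref{eqn:SS}; surjectivity of these edge maps is standard (the unit $1\in K_1$ of a line bundle, or more simply the fact that $\sO^\times \hookrightarrow \sK_1$ splits the edge map via the determinant/rank structure). The commutativity of the right-hand square is the compatibility of the edge maps with the map of spectral sequences in \eqref{eqn:SS}, already asserted there. The key remaining assertion is that the rows \emph{split functorially}. For this I would use part (1): the splitting of $H^0_\zar(X,\sO_X^\times)\to H^0_\cdh(X,\sO_X^\times)$ is provided by the dotted arrow composed with $H^0_\zar(X,\sO_X^\times)\to H^0_\zar(X^{sn},\sO^\times_{X^{sn}})$ — wait, more precisely, one needs a splitting of the edge maps themselves, i.e.\ a natural section $H^0_\cdh(X,\sO_X^\times)\to KH_1(X)$. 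This comes from the unit map: the symbol $\{-\}\colon \sO^\times \to \sK_1$ of the $K$-theory sheaf gives a section of the edge map $\sK_1\to \sO^\times$ at the level of (Zariski, hence by sheafification cdh) sheaves, inducing $H^0(X,\sO_X^\times)\to H^1(X,\sK_{1})$... no: it induces $H^0(X,\sO^\times)\to K_1(X)$ resp.\ $H^0_\cdh(X,\sO^\times)\to KH_1(X)$ splitting the edge maps, and naturality in $X$ is clear since the symbol map is natural. The diagram then splits compatibly because the two section maps are induced by the same symbol map of sheaves under change of topology.

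**Main obstacle.** The one genuinely substantive point is the isomorphism $H^0_\zar(X^{sn},\sO^\times)\xrightarrow{\cong} H^0_\cdh(X^{sn},\sO^\times)$ and, relatedly, $H^0_\cdh(X,\sO_X^\times)\cong H^0_\cdh(X^{sn},\sO^\times_{X^{sn}})$ — i.e.\ understanding exactly how $\sO^\times$ behaves under cdh-sheafification and why seminormality is the precise condition that makes the Zariski and cdh global units agree. Everything else (exactness of the rows, commutativity, existence and naturality of the splitting via the symbol map) is formal once \eqref{eqn:SS} and its functoriality are in hand. I would look for the cleanest available reference for the cdh-sheaf $\sO^\times$ on seminormal schemes and quote it, rather than reprove it; if no reference is clean enough, the argument is the abstract-blow-up Mayer--Vietoris computation sketched above, which is short but must be done carefully in the non-reduced-preimage case.
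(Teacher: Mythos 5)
Your proposal is correct in outline and, for part (1), follows the same path as the paper: the paper simply quotes the literature (Huber--Kelly, Prop.~6.14, applied with $Y=\G_m$) for the identification $H^0_\cdh(X,\sO_X^\times)\cong H^0_\zar(X^{sn},\sO^\times_{X^{sn}})$, which is exactly the fact you isolate as the substantive core. For part (2) you diverge in one place. The paper cites an earlier result (Krishna, \emph{On 0-cycles with modulus}, Lem.~2.1) for the functorial splitting of the top row, and obtains the splitting of the bottom row not by a sheaf-level construction but by reduction to the seminormal case: since $KH(X)\to KH(X^{sn})$ is a weak equivalence (Weibel, Prop.~3.2, plus Zariski descent), one may assume $X=X^{sn}$, and then part (1) identifies the bottom edge map with the top one, so the splitting is inherited. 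Your alternative --- splitting both edge maps directly by the unit map $\sO^\times\to\sK_1$ --- is viable and even has the merit of not using part (1) inside part (2), but the cdh half is the one step you should not wave through: a cdh-global unit need not lift to a Zariski unit of $X$ (by part (1) the obstruction is exactly $H^0_\zar(X,\sO_X^\times)\to H^0_\zar(X^{sn},\sO^\times_{X^{sn}})$ failing to be surjective), so you cannot just precompose the Zariski splitting; you must genuinely descend the transformation of presheaves of spectra $\G_m[1]\to K\to KH$ through cdh-hypercohomology, using that $KH$ is cdh-local and that the cdh-sheafification of $\sK_1$ agrees with that of $\sO^\times$ so the composite with the edge map is the identity. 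Finally, your justification of the bottom horizontal isomorphism in (1) (``$X$ and $X^{sn}$ become isomorphic after any cdh-cover'') is loose as stated --- the precise input is the Huber--Kelly computation of cdh-sheafifications of representable presheaves via seminormalization --- but since you explicitly plan to quote a reference there, this does not affect the correctness of the plan.
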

\begin{proof}
The first part is well known (e.g., apply
  \cite[Prop.~6.14]{Huber-Kelly} with $Y = \G_m$).
  It is shown in \cite[Lem.~2.1]{Krishna-ant} that the top sequence
  is split exact such that the splitting is functorial in $X$.
  The bottom sequence is left exact by definition. We now show that it is actually
  split exact.
  
  Using \cite[Prop.~3.2]{Weibel-KH} and
  Zariski descent for $KH$-theory, it follows that the canonical map
  $KH(X) \to KH(X^{sn})$ is a weak equivalence. Hence, we can assume $X$ to be
  seminormal to prove the split exactness of the bottom sequence in
  ~\eqref{eqn:K1-split-0}. Using the split exact property of the top sequence, it suffices
  to show that the change of topology map $H^0_\zar(X, \sO^{\times}_X) \to
  H^0_\cdh(X, \sO^{\times}_X)$ is an isomorphism. But this is the first part of the lemma.
\end{proof}

The spectral sequences of ~\eqref{eqn:SS} imply that there is a commutative
  diagram of short exact sequences
  \begin{equation}\label{eqn:SS-curve}
    \xymatrix@C1pc{
      0 \ar[r] & H^1_\zar(X, \sK_{i+1,X}) \ar[r] \ar[d] &  K_i(X) \ar[r] \ar[d] &
      H^0_\zar(X, \sK_{i,X}) \ar[r] \ar[d] &  0 \\
      0 \ar[r] & H^1_\cdh(X, \sK_{i+1,X}) \ar[r] &  KH_i(X) \ar[r] &
      H^0_\cdh(X, \sK_{i,X}) \ar[r] & 0}
  \end{equation}
 for every $i \ge 0$ if $\dim(X) \le 1$.
  Combining Lemma~\ref{lem:K1-split} and ~\eqref{eqn:SS-curve}, we get
  \begin{equation}\label{eqn:SS-curve-*}
  SK_1(X) \cong  H^1_\zar(X, \sK_{2,X}) \ \ {\rm and} \ \ 
  SKH_1(X) \cong  H^1_\cdh(X, \sK_{2,X})
\end{equation}
if $\dim(X) \le 1$.

For a closed immersion $W \subset Z$ in $\Sch_k$, we let $K(Z,W)$ be the relative homotopy
$K$-theory spectrum of the pair $(Z,W$).  It is defined as the  homotopy fiber
of the restriction map of spectra $K(Z) \to K(W)$. If $f \colon Z' \to Z$ is a morphism
of $k$-schemes such that $W' = W \times_Z Z'$, we let $K(Z, Z', W)$ (the double relative
$K$-theory spectrum) denote the homotopy fiber
of the canonical pull-back map $f^* \colon K(Z,W) \to K(Z', W')$. We define
$KH(Z,W)$ and $KH(Z,Z',W)$ in analogous fashion.

\subsection{Algebraic and homotopy $K_2$-groups of normal crossing
  curves}\label{sec:curves*}
We shall now compare $K_2(X)$ and $KH_2(X)$ when $X$ is a normal crossing curve.
We first recall the definition of normal crossing schemes that we shall use in this
paper.

Let $X \in \Sch_k$ be a reduced scheme of pure dimension $d \ge 0$.
Let $\{X_1, \ldots , X_n\}$ be the set of irreducible components of $X$. We 
shall say that $X$ is a normal crossing $k$-scheme 
if for every nonempty subset $J \subset [1, n]$, the scheme theoretic 
intersection $X_J := {\underset{i \in J}\bigcap} X_i$ is either empty or
a smooth $k$-scheme of pure dimension $d + 1 - |J|$.
Recall that $X \in \Sch_k$ is called $K_i$-regular if the map
  $K_i(X) \to K_i(X \times \Delta_n)$, induced by the projection, is an isomorphism for all
  $n \ge 0$.

\begin{lem}\label{lem:K1-reg}
  Let $X \in \Sch_k$ be a normal crossing curve. Then $X$ is $K_i$-regular for
  $i \le 1$.
\end{lem}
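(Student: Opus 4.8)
The statement to prove is that a normal crossing curve $X$ over $k$ is $K_i$-regular for $i \le 1$, i.e.\ that $K_i(X) \to K_i(X \times \Delta_n)$ is an isomorphism for all $n \ge 0$ and $i \le 1$. I would reduce this, via the well-known characterisation of $K_i$-regularity (and the fact that $K_i$-regular implies $K_j$-regular for $j \le i$), to showing that $K_1$-regularity holds; $K_0$- and $K_{-1}$-regularity then come along. The cleanest route is to identify the obstruction to $K_i$-regularity with the $NK_i$-groups and show these vanish, or equivalently to show that the natural map $K_i(X) \to KH_i(X)$ is an isomorphism for $i \le 1$. Since $\dim X \le 1$, the comparison of spectral sequences in \eqref{eqn:SS-curve} reduces this to checking that the change-of-topology maps $H^0_\zar(X, \sK_{i,X}) \to H^0_\cdh(X, \sK_{i,X})$ and $H^1_\zar(X, \sK_{i+1,X}) \to H^1_\cdh(X, \sK_{i+1,X})$ are isomorphisms for $i \le 1$.

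**Key steps.** First, normalise $X$: since $X$ is a normal crossing curve, its normalization $X^n$ is a disjoint union of smooth curves, hence regular, so $K_i(X^n) \cong KH_i(X^n)$ trivially, and $X^n$ is $K_i$-regular for all $i$. Second, set up the conductor square relating $X$, $X^n$, the reduced conductor $Z$ (a finite set of closed points, namely the singular points of $X$ together with their preimages), and use the Mayer–Vietoris/descent sequence for $K$-theory of this abstract blow-up square — this is where the normal crossing hypothesis is essential, since it guarantees $Z$ and its preimage $Z^n$ in $X^n$ are smooth (in fact $0$-dimensional) and that the square is an abstract blow-up (cdh) square. Third, compare with the analogous cdh-square for $KH$: because $K$ and $KH$ agree on the regular schemes $X^n$, $Z$, $Z^n$ appearing in the square, a diagram chase on the two long exact sequences (the $K$-theory sequence of the conductor square versus the cdh-descent sequence for $KH$) forces $K_i(X) \to KH_i(X)$ to be an isomorphism for $i \le 1$ — one only needs to know that the relevant relative terms, which live on $0$-dimensional schemes, behave well, and that $K_1$ of a $0$-dimensional (Artinian) $k$-scheme and of such times $\Delta_n$ agree, which is classical. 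Fourth, conclude: $K_i(X) \cong KH_i(X)$ for $i \le 1$, and since $KH$ is homotopy invariant, this gives $K_i(X) \cong K_i(X\times \Delta_n)$ for $i \le 1$, i.e.\ $K_i$-regularity.

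**Main obstacle.** The delicate point is not the regular pieces but controlling the relative $K$-theory of the conductor, i.e.\ the terms $K_i(X, Z)$ versus $K_i(X^n, Z^n)$ (the double-relative group $K_i(X, X^n, Z)$), for $i = 1, 2$. One must show this double-relative group vanishes in the relevant degrees, or at least that it is $\mathbb{A}^1$-invariant, so that it cannot obstruct the comparison with $KH$. Here I expect to invoke that $Z, Z^n$ are disjoint unions of spectra of Artinian local $k$-algebras, for which the relevant $K_1$- and $K_2$-computations (and their $\mathbb{A}^1$-invariance after taking the double-relative fiber) are tractable — pro-descent/excision for $K$-theory along the conductor ideal (in the style of Geisser–Hesselholt or Morrow, or more elementarily the fact that $SK_1$ and the relevant $K_2$ of such rings are nilpotent/well-understood) does the job. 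Alternatively, and perhaps more cleanly for this paper's purposes, one can bypass the explicit computation by using that $K \to KH$ is a cdh-equivalence together with the fact that the cofiber of $K \to KH$ (the "homotopy $K$-theory minus $K$-theory" spectrum) is, for a curve, controlled by the $NK$-groups of its local rings, which vanish in degrees $\le 1$ for a normal crossing curve by a direct argument on the two branches through each node.
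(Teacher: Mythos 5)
Your headline route has a genuine logical gap: from ``$K_i(X)\to KH_i(X)$ is an isomorphism for $i\le 1$'' you cannot conclude that $X$ is $K_i$-regular for $i\le 1$. The standard implication goes the other way: $K_i$-regularity in a range forces $K\cong KH$ there via the spectral sequence $E^1_{p,q}=N^pK_q(X)\Rightarrow KH_{p+q}(X)$, but knowing that the abutment agrees with the $p=0$ column in total degree $\le 1$ does not force the individual groups $N^pK_1(X)$ to vanish, and these are exactly what measure the failure of $K_1(X)\to K_1(X\times\Delta_n)$ to be an isomorphism. Moreover, the reduction you invoke (``$\dim X\le 1$, so the two-row comparison ~\eqref{eqn:SS-curve} applies'') is unavailable for $X\times\Delta_n$, which has dimension $n+1$, so you cannot simply rerun the $K$-versus-$KH$ comparison there. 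The salvageable version of your idea is the one you half-state in Step~4: compare the Mayer--Vietoris sequences of the conductor square for $X$ and for $X\times\Delta_n$ directly and apply the five lemma, using that $X^n$, $Z$ and $Z^n$ are regular. But then the crux, which you leave vague, is that in the conductor square the map $\sO(Z)\to\sO(Z^n)$ is \emph{not} surjective, so Milnor's sequence does not extend past $K_1$ for free; you need the birelative group $K_1(X,X^n,Z)$ (and its $\times\Delta_n$ analogue) to vanish. This does hold, by the Geller--Weibel formula $K_1(A,B,I)\cong I/I^2\otimes_B\Omega^1_{B/A}$ together with the facts that for a normal crossing curve the conductor is reduced and $Z^n\to Z$ is \'etale, whence $\Omega^1_{Z^n/Z}=0$; but this is the step that has to be written down, and pro-excision is neither needed nor does it obviously deliver the degree-by-degree statement you want. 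You also never address the non-affine case, where no Mayer--Vietoris sequence for the conductor square is available without a further localization argument.

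For comparison, the paper avoids the birelative analysis entirely. It inducts on the number of irreducible components, splitting off one smooth component $X_1$ from the closure $X_2$ of the rest along the smooth $0$-dimensional $Y=X_1\cap X_2$. That square is bi-Cartesian with \emph{both} maps to the corner $Y$ surjective, so Milnor's theorem gives the exact sequence $K_2(Y)\to K_1(X)\to K_1(X_1)\oplus K_1(X_2)\to K_1(Y)$ unconditionally; the outer terms are insensitive to $\times\Delta_n$ by smoothness of $Y$ and by induction, and the five lemma plus split injectivity of $p_X^*$ finishes the affine case. The general case is then handled by a Thomason--Trobaugh localization argument against a dense affine open containing $X_\sing$. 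Your normalization square can be made to work, but it costs you the Geller--Weibel input and still leaves the non-affine case to be dealt with separately.
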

\begin{proof}
  It is well known (e.g., use the Bass fundamental exact sequence)
  that the lemma is equivalent to the assertion that $X$ is $K_1$-regular.
  We first assume that $X$ is affine.
  We let $\mu(X)$ denote the number of irreducible components of $X$ and
  write $X(n) = X \times \Delta_n$.
 We shall prove $K_1$-regularity of $X$ by induction on $\mu(X)$. The case $\mu(X) = 1$
  is trivial because $X$ is then smooth, and one knows that
  smooth (more generally, regular) schemes are $K_i$-regular for all $i$.
  Let us now assume that $\mu(X) > 1$.
  We let $X_1$ be an irreducible component of $X$ and let $X_2$ be the scheme theoretic
  closure of $X \setminus X_1$ in $X$. We let $Y = X_1 \cap X_2$. Then
  $X_2$ is a normal crossing curve such that $\mu(X_2) = \mu(X) -1$ and
  $Y$ is a 0-dimensional smooth $k$-scheme.
  We have a commutative square
  \begin{equation}\label{eqn:Milnor-square}
    \xymatrix@C1pc{
      Y(n) \ar[r] \ar[d] & X_1(n) \ar[d] \\
    X_2(n) \ar[r] & X(n)}
\end{equation}
of affine schemes
for every $n \ge 0$, which is Cartesian as well as co-Cartesian and in which all arrows
are closed immersions.

By \cite[Thm.~6.4]{Milnor}, there exists a commutative diagram of
exact sequences
\begin{equation}\label{eqn:Milnor-square-0}
  \xymatrix@C.8pc{
 K_2(Y) \ar[r] \ar[d] & K_1(X)
 \ar[r] \ar[d]^-{p^*_X} &  K_1(X_1) \oplus K_1(X_2) \ar[r] \ar[d] & K_1(Y) \ar[d] \\
 K_2(Y(n)) \ar[r] & K_1(X(n))
  \ar[r] &  K_1(X_1(n)) \oplus K_1(X_2(n)) \ar[r]  & K_1(Y(n)),}
\end{equation}
in which the vertical arrows are induced by the projection maps.
The left-most and the right-most vertical arrows are isomorphisms because $Y$ is smooth.
The vertical arrow involving $X_1$ and $X_2$ is an isomorphism by induction on $\mu(X)$.
It follows via a diagram chase that $p^*_X$ is surjective. As this map is always
(split) injective,  the affine case of the lemma follows.

If $X$ is not necessarily affine, we choose a dense open affine subscheme $U \subset X$
such that $X_\sing \subset U$ and let $Y = X \setminus U$ with the reduced induced
closed subscheme structure. Then $Y$ is a regular $k$-scheme.
Using the Thomason-Trobaugh localization sequence (\cite[Thm.~7.4]{TT})
\[
  K^{Y(n)}(X(n)) \to K(X(n)) \to K(U(n))
\]
and the weak equivalence $K(Y(n)) \xrightarrow{\sim} K^{Y(n)}(X(n))$
(this uses excision and the fact that $Y = Y_\reg \subset X_\reg$),
we get a commutative diagram of exact sequence of homotopy groups
\begin{equation}\label{eqn:Milnor-square-1}
  \xymatrix@C.8pc{
K_1(Y) \ar[r] \ar[d] & K_1(X) \ar[r] \ar[d]^-{p^*_X} & K_1(U) \ar[r] \ar[d]^-{p^*_U} & 
K_0(Y) \ar[d] \\
K_1(Y(n)) \ar[r] & K_1(X(n)) \ar[r] & K_1(U(n)) \ar[r] & 
K_0(Y(n)).}
\end{equation}

The left-most and the right-most vertical arrows are isomorphisms because $Y$ is regular.
The arrow $p^*_U$ is an isomorphism because $U$ is affine.
It follows via a diagram chase that $p^*_X$ is surjective.  
As this map is always (split) injective,  the lemma follows.
\end{proof}

\begin{lem}\label{lem:NC-K2}
  Let $X \in \Sch_k$ be a normal crossing curve. Then the canonical map
  $K_2(X) \to KH_2(X)$ is surjective. In particular, the map
  $H^0_\zar(X, \sK_{2,X}) \to H^0_\cdh(X, \sK_{2,X})$ is surjective.
\end{lem}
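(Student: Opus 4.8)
The plan is to run the same inductive strategy on the number $\mu(X)$ of irreducible components as in the proof of Lemma~\ref{lem:K1-reg}, but now at the level of the comparison map $K_2 \to KH_2$. First, observe that the second assertion is an immediate consequence of the first: by Lemma~\ref{lem:K1-reg} the curve $X$ is $K_1$-regular, hence $K_2$-regular (again via the Bass fundamental sequence), so $K_2(X) \cong KH_2(X')$ for the simplicial scheme $X' = X\times\Delta_\bullet$ contributes, and more concretely the surjectivity of $K_2(X)\to KH_2(X)$ together with the exact sequences \eqref{eqn:SS-curve} forces the edge map $H^0_\zar(X,\sK_{2,X})\to H^0_\cdh(X,\sK_{2,X})$ to be surjective (the $K_2(X)\to KH_2(X)$ surjection maps onto the $H^0_\cdh(X,\sK_{2,X})$ quotient, which receives $H^0_\zar(X,\sK_{2,X})$ compatibly). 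So the whole content is the surjectivity of $K_2(X)\to KH_2(X)$.

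For that, I would first reduce to the affine case exactly as in Lemma~\ref{lem:K1-reg}: choose a dense open affine $U\subset X$ containing $X_{\sing}$, set $Y = X\setminus U$ with reduced structure (a regular, hence $K$-regular, $0$-dimensional scheme), and compare the Thomason--Trobaugh localization sequences for $K$ and $KH$. Since $Y$ is regular, $K_i(Y)\to KH_i(Y)$ is an isomorphism for all $i$, and $K_i(U)\to KH_i(U)$ is an isomorphism because $U$ is affine and of dimension $\le 1$ — indeed $U$ is a $K_i$-regular normal crossing curve by Lemma~\ref{lem:K1-reg} (for $i\le 1$), and one needs the same for $i=2$; this follows because a $K_1$-regular scheme of dimension $\le 1$ is $K_n$-regular for all $n$ by Vorst/Bass-type arguments, so $K_2(U)\cong KH_2(U)$. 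A five-lemma chase in the ladder of localization sequences then reduces surjectivity of $K_2(X)\to KH_2(X)$ to the affine case.

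In the affine case I would induct on $\mu(X)$. The base case $\mu(X)=1$ is the smooth case, where $K_2(X)\to KH_2(X)$ is an isomorphism. For the inductive step, split off a component: write $X = X_1\cup X_2$ with $X_2$ the closure of $X\setminus X_1$, $Y = X_1\cap X_2$ a $0$-dimensional smooth scheme, and use the Cartesian-cocartesian (Milnor) square \eqref{eqn:Milnor-square}. Apply Milnor patching / the associated Mayer--Vietoris sequence in $K$-theory (valid here since $Y\hookrightarrow X_1$ is a closed immersion of affine schemes, one of the maps being split surjective on rings) to get a long exact sequence $\cdots\to K_3(Y)\to K_2(X)\to K_2(X_1)\oplus K_2(X_2)\to K_2(Y)\to\cdots$, and the analogous one for $KH$ (which satisfies cdh-descent, hence Mayer--Vietoris for this abstract blow-up/closed cover square). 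Map one ladder to the other. The maps $K_n(Y)\to KH_n(Y)$ are isomorphisms ($Y$ regular), and $K_2(X_1)\to KH_2(X_1)$ is an isomorphism ($X_1$ smooth) while $K_2(X_2)\to KH_2(X_2)$ is surjective by the inductive hypothesis; a diagram chase then yields surjectivity of $K_2(X)\to KH_2(X)$.

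The main obstacle I anticipate is not any single step but making the two Mayer--Vietoris ladders genuinely compatible and ensuring the $KH$ side really does satisfy Mayer--Vietoris for the square \eqref{eqn:Milnor-square}: this needs that the square is a cdh-distinguished (abstract blow-up or closed Nisnevich cover) square and that $KH$ is cdh-local, which is standard (Cisinski, Haesemeyer, already cited in \S\ref{sec:KKH}), but must be invoked cleanly. A secondary technical point is the claim that a $K_1$-regular curve is $K_2$-regular, needed to know $K_2(U)\cong KH_2(U)$; this should be cited from Vorst-type results (or deduced from the fact that dimension $\le 1$ plus $K_1$-regularity propagates upward), and it is worth stating explicitly so the localization argument goes through.
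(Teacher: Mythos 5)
Your deduction of the second assertion from the first is fine: since $\dim(X)\le 1$, both edge maps $K_2(X)\to H^0_\zar(X,\sK_{2,X})$ and $KH_2(X)\to H^0_\cdh(X,\sK_{2,X})$ are surjective, so the claim follows from the commutative square coming from \eqref{eqn:SS}. The argument for the main assertion, however, has a fatal gap: higher algebraic $K$-theory does \emph{not} satisfy Mayer--Vietoris for Milnor squares. Milnor's theorem applied to the bi-Cartesian square \eqref{eqn:Milnor-square} produces an exact sequence that \emph{begins} at $K_2(Y)\to K_1(X)$; there is no term $K_2(X)\to K_2(X_1)\oplus K_2(X_2)$, and a fortiori no boundary map $K_3(Y)\to K_2(X)$. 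Extending the sequence into the range you need requires the vanishing of birelative groups such as $K_2(X,X_1,I)$, i.e.\ excision for $K_2$, which fails in general (it is controlled by relative cyclic homology and is exactly the difficulty the paper must confront in Lemma~\ref{lem:D-SK-1}, where the analogous obstruction $K_1(D,D^n,E)\cong \sI_E/\sI_E^2\otimes\Omega^1_{E'/E}$ only vanishes under extra hypotheses). So the long exact sequence on which your induction rests does not exist, and the inductive step collapses.

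A second, related error is the claim that a $K_1$-regular curve is $K_2$-regular ``by Vorst/Bass-type arguments''. Regularity of the index propagates \emph{downwards} (Vorst: $K_{n+1}$-regular $\Rightarrow$ $K_n$-regular), and by the theorem of Corti\~nas--Haesemeyer--Weibel resolving Vorst's conjecture in characteristic zero, a $K_2$-regular affine curve over such a field is regular; hence a singular affine normal crossing curve is never $K_2$-regular, and $K_2(U)\to KH_2(U)$ need not be an isomorphism --- if it were, the lemma would have no content. The intended proof is much softer and uses only what Lemma~\ref{lem:K1-reg} already provides: since $X$ is $K_0$- and $K_1$-regular, the rows $q\le 1$ of the spectral sequence $E^1_{p,q}=K_q(X\times\Delta_p)\Rightarrow KH_{p+q}(X)$ are constant simplicial abelian groups, so $E^2_{p,q}=0$ for $p>0$ and $q\le 1$; consequently the filtration on $KH_2(X)$ has only the piece $E^\infty_{0,2}=E^2_{0,2}=\cok\bigl(\partial^*_1-\partial^*_0\colon K_2(X\times\Delta_1)\to K_2(X)\bigr)$, which exhibits $KH_2(X)$ as a quotient of $K_2(X)$. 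No Mayer--Vietoris in degree $2$, no affine reduction, and no induction on components is needed.
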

\begin{proof}
  It follows from the spectral sequences \eqref{eqn:SS} that the
  edge maps $K_2(X) \to H^0_\zar(X, \sK_{2,X})$ and $KH_2(X) \to H^0_\cdh(X, \sK_{2,X})$
  are surjective. Hence, we only need to prove the first assertion of the lemma.
  In view of Lemma~\ref{lem:K1-reg}, the spectral sequence
  \[
    E^{p,q}_1 = K_q(X \times \Delta_p) \Rightarrow KH_{p+q}(X),
  \]
  degenerates to an exact sequence
  \begin{equation}\label{eqn:NC-K2-0}
    K_2(X \times \Delta_1) \xrightarrow{\partial^*_1 - \partial^*_0}
  K_2(X) \to KH_2(X) \to 0.
\end{equation}
This finishes the proof.
\end{proof}

\vskip .3cm

\subsection{Algebraic and homotopy $K_1$-groups of curves}
\label{sec:K1-curve}
For the rest of \S~\ref{sec:K-thry}, we shall work with the following set-up.
We let $X$ be a regular integral quasi-projective surface over $k$ and $D \subset X$
  a reduced effective Cartier divisor.
 Recall that $S_X$ denotes the double of $X$ along $D$.
The goal of this subsection is to prove the following two lemmas.

\begin{lem}\label{lem:D-SK-1}
    The map $SK_1(D) \to SKH_1(D)$ is an isomorphism under any of the following
    conditions.
    \begin{enumerate}
    \item
      $k$ is perfect and $D$ is seminormal.
    \item
      $k \subseteq \ov{\Q}$.
    \end{enumerate}
  \end{lem}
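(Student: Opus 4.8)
The plan is to reduce the comparison $SK_1(D) \to SKH_1(D)$ to the isomorphism \eqref{eqn:SS-curve-*}, which identifies these groups with $H^1_\zar(D, \sK_{2,D})$ and $H^1_\cdh(D, \sK_{2,D})$ respectively (note $D$ has dimension $\le 1$ since $X$ is a surface). So the task becomes showing that the change-of-topology map $H^1_\zar(D, \sK_{2,D}) \to H^1_\cdh(D, \sK_{2,D})$ is an isomorphism. By the cdh-descent spectral sequence and the fact that $KH$ is invariant under seminormalization (\cite[Prop.~3.2]{Weibel-KH}), one reduces to the case where $D$ is seminormal; in case (1) this is part of the hypothesis, and in case (2) one replaces $D$ by $D^{sn}$. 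The heart of the matter is then to analyze the Zariski-to-cdh comparison for the sheaf $\sK_2$ on a seminormal curve.

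First I would handle the case where $k$ is perfect and $D$ seminormal. A seminormal curve over a perfect field has normalization $D^n$ smooth, and the conductor $D_{\mathrm{sing}}$ is a reduced zero-dimensional (hence smooth) $k$-scheme, with $D^n \to D$ an abstract blow-up along $D_{\mathrm{sing}}$. The cdh-sheafification is computed by Mayer--Vietoris for this abstract blow-up square. I would compare the Zariski (Thomason--Trobaugh) Mayer--Vietoris sequence for $K_*$ along $D^n \amalg D_{\mathrm{sing}} \to D$ against the cdh one for $KH_*$, using that $D^n$, $D_{\mathrm{sing}}$ and the preimage $\widetilde D_{\mathrm{sing}}$ are all regular, so that $K_* = KH_*$ on them; the comparison then reduces to a statement in a single degree. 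Concretely, I expect to land on the assertion that $SK_1 \to SKH_1$ is an isomorphism for the seminormal curve $D$ provided the relative term $K_2(D, D^n, D_{\mathrm{sing}})$ (a double relative $K$-group, in the notation of \S\ref{sec:KKH}) vanishes in the appropriate degree — equivalently, that the relevant piece of $K_2$ of the ``Milnor patching'' square is already cdh-local. For a perfect field this should follow from the structure of $\sK_2$ and $K_2$ of Artinian rings together with continuity/rigidity arguments, or alternatively from Geisser--Hesselholt / Cortiñas--Haesemeyer--Schlichting--Weibel-type results on the $cdh$-descent for $\mathit{KH}$ versus $K$ on curves over a perfect field.

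For case (2), $k \subseteq \ov\Q$, the curve $D$ is defined over a number field, hence over a perfect field of characteristic $0$; after seminormalizing we again have $D^n$ smooth and the same abstract blow-up square, and the char.~$0$ version of the cdh-descent comparison (where one has resolution of singularities and the Cortiñas--Haesemeyer--Schlichting--Weibel theorem on the fiber of $K \to KH$ in terms of cyclic homology) applies; over a field algebraic over $\Q$ the relevant obstruction group — measured by (negative) cyclic homology of the conductor data relative to $k$ — vanishes in the range we need, giving the isomorphism. I expect \textbf{the main obstacle} to be precisely controlling this relative term: showing that the Zariski/cdh discrepancy for $\sK_2$ on the seminormal curve $D$ is killed exactly under the hypotheses (1) or (2), and not in general. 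This is where the arithmetic assumptions on $k$ genuinely enter, and where I anticipate needing the finer $K$-theoretic input (the algebraic-vs-homotopy $K$-theory results promised at the start of \S\ref{sec:K-thry}, and the behaviour of $K_1$, $K_2$ of zero-dimensional conductor subschemes and their thickenings) rather than any formal manipulation of spectral sequences.
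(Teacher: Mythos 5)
Your overall strategy --- compare the Zariski and cdh Mayer--Vietoris sequences attached to the abstract blow-up $D^n \amalg E \to D$ along a conductor subscheme $E$, using regularity of $D^n$ and of the reduced conductor data to reduce everything to a single relative term --- is the same as the paper's, and your treatment of case (1) is essentially on track: over a perfect field with $D$ seminormal the conductor $E$ and its preimage $E'$ may be taken reduced, the extension $E'/E$ is then finite \'etale, and the obstruction to excision, which by Geller--Weibel is $K_1(D,D^n,E)\cong{\sI_E}/{\sI^2_E}\otimes_{E'}\Omega^1_{E'/E}$, vanishes. This Geller--Weibel identification is the concrete fact hiding behind your appeal to ``continuity/rigidity arguments,'' and it is what makes the two Mayer--Vietoris sequences comparable term by term.

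Case (2), however, contains a genuine gap. You propose to ``replace $D$ by $D^{sn}$'' using the seminormalization-invariance of $KH$. That step only controls the right-hand side $SKH_1(D)\cong SKH_1(D^{sn})$; it says nothing about $SK_1(D)$ versus $SK_1(D^{sn})$, and the discrepancy between these two groups is precisely what is at stake in case (2), where no seminormality is assumed. Note that any $k\subseteq\ov{\Q}$ is perfect, so if this reduction were legitimate, case (2) would collapse into case (1) and the algebraicity hypothesis would be superfluous --- whereas the paper emphasizes (citing the counterexamples of Binda--Krishna) that it cannot be dropped. The paper instead keeps the possibly non-reduced conductor $E'$ in the Mayer--Vietoris sequence and proves the key claim that $K_2(E')=K_2(E'_\red)$ when $k\subseteq\ov{\Q}$: writing the coordinate ring of $E'$ as a product of truncated polynomial rings $A_i=k_i[t]/(t^{n_i})$ with augmentation ideal $J$, Bloch's theorem gives $K_2(A_i,J)\cong HC_1(A_i,J)\cong \Omega^1_{(A_i,J)}/d(J)$, and this vanishes precisely because $\Omega^1_{k_i/\Q}=0$ for $k_i$ algebraic over $\Q$. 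You correctly anticipate that a cyclic-homology computation of a relative term is where the arithmetic hypothesis must enter, but you never pin down this computation, and the reduction you do propose is circular; as written, case (2) does not close.
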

  \begin{proof}
Let $\pi \colon D^n \to D$ be the normalization morphism. Let $E \subset D$ be
    a conducting closed subscheme for $\pi$ and let $E' = \pi^*(E)$.
    We then have an exact sequence of relative and double relative $K$-groups:
    \begin{equation}\label{eqn:D-SK-ex-0}
      \cdots \to  K_i(D,D^n, E) \to K_i(D,E) \to K_i(D^n, E') \to K_{i-1}(D,D^n,E) \to
      \cdots .
    \end{equation}
Since $\dim(D) = 1$ and $D$ is reduced, the conducting subscheme $E$ is supported on a finite set of closed points (hence it is affine). It follows from \cite[Thm.~0.2]{Geller-Weibel} and
    the Thomason-Trobaugh descent spectral sequence \cite{TT} that $K_0(D,D^n,E) = 0$ and
    $K_1(D,D^n,E) \cong {\sI_E}/{\sI^2_E} \otimes_{E'} \Omega^1_{E'/E}$, where
    $\sI_E$ is the Zariski sheaf of ideals on $D$ defining $E$.

 Assume that $(1)$ holds. Since  $D$ is seminormal,  we can choose our conducting subschemes $E$ and $E'$
 to be reduced (in fact, $E$ can be chosen to be $V(\mathcal{I})$ where $\mathcal{I}$ is the largest conducting ideal for the map $\cO_{D} \subset \pi_* \cO_{D^n}$, see \cite[Prop.~4.2(1)]{Krishna-torsion}). It follows that the coordinate rings of $E$ and $E'$ are finite products of finite and separable extensions of $k$,
 and that the extension $E'/E$ is separable (this uses the perfectness hypothesis).
 It follows that $\Omega^1_{E'/E}=0$, hence
    $K_1(D,D^n,E) = 0$. This gives  a  Mayer-Vietoris exact sequence
    \begin{equation}\label{eqn:D-SK-ex-1}
      K_2(D^n) \oplus K_2(E) \to {K_2(E')} \to SK_1(D) \to SK_1(D^n) \to 0,
      \end{equation}
      where $E'$ is reduced (note that $SK_1(E)= SK_1(E')=0$ since $E$ and $E'$ are
      semilocal).

 If $D$ is not seminormal, the conducting subscheme $E$ cannot be chosen to be reduced.  However, for $m$ sufficiently large, we have that $E \subset m E_{\rm red}$ and $E'\subset mE'_{\rm red}$.  It follows from the above expression of $K_1(D,D^n,E)$ that there
 is some conducting closed subscheme $E \subset D$, having the same support as that
 of the maximal conducting subscheme 
      such that one has an exact Mayer-Vietoris sequence
    \begin{equation}\label{eqn:D-SK-ex-2}
      K_2(D^n) \oplus K_2(E) \to {K_2(E')} \to SK_1(D) \to SK_1(D^n) \to 0.
      \end{equation}  

We are interested in estimating the term $K_2(E')$. We claim  that if (2) holds then $K_2(E') = K_2(E'_{\rm red})$.  Since $D^n$ is a normal curve, the coordinate ring $A$ of $E'$
      is a finite product $A= \prod_{i=1}^r A_i$ of Artinian $k$-algebras $A_i$, 
      each of which is isomorphic to a truncated polynomial ring
      of the form $A_i = {k_i[t]}/{(t^{n_i})}$, where $k_i/k$ is a finite field extension
      and $n_i \ge 1$ is an integer (this follows, for example, from Cohen's structure theorem).  In order to prove the claim, we can clearly assume that $r=1$.  Let $k'$ be the residue field of $A$.
     
Let $J$ be the kernel of the augmentation ideal $A\to k'$ and write $\Omega^1_{(A, J)}$ for the kernel of the map $\Omega^1_{A/\mathbb{Z}} \to \Omega^1_{k'/\Z}$.  If $k$ is a $\Q$-algebra, a result of Bloch (see, e.g.,  \cite[Thm.  4.1]{KSri}) gives an isomorphism
      \[ K_2(A, J) \cong HC_1(A ,J) \cong {\Omega^1_{(A, J)}}/{d(J)}.\]
  By computing these groups for
      truncated polynomial rings, we conclude that if  $k \subset \ov{\Q}$,  then $K_2(A, J)=0$. Since the map $K_2(A)\to K_2(A/J)=K_2(k')$ is anyway surjective, we conclude that $K_2(A)=K_2(A/J)=K_2(k')$ in this case. This proves the claim.
      It follows from the claim that ~\eqref{eqn:D-SK-ex-2} is of the
      form
\begin{equation}\label{eqn:D-SK-ex-3}
      K_2(D^n) \oplus K_2(E) \to {K_2(E'_\red)} \to SK_1(D) \to SK_1(D^n) \to 0.
    \end{equation}

We now compare  the sequences \eqref{eqn:D-SK-ex-1} and \eqref{eqn:D-SK-ex-3} with the corresponding ones for $KH$. Since $KH$-theory   satisfies $cdh$-descent (see \cite{Cisinski}, \cite{Haesemeyer}),
      we always have an exact sequence (see \cite[Cor. IV. 12.6]{Weibel})
\begin{equation}\label{eqn:D-SK-ex-4}
      KH_2(D^n) \oplus KH_2(E_{\red}) \to {KH_2(E'_\red)} \to SKH_1(D) \to SKH_1(D^n) \to 0.
      \end{equation}  
      Putting things together, we get a commutative diagram (in both cases (1) and (2))
      with exact rows
 \begin{equation}\label{eqn:D-SK-ex-5}     
   \xymatrix@C.8pc{
     K_2(D^n) \oplus K_2(E) \ar[r] \ar[d] & {K_2(E'_\red)} \ar[r] \ar[d] &
     SK_1(D) \ar[r] \ar[d] & SK_1(D^n) \ar[r] \ar[d] &  0 \\
     KH_2(D^n) \oplus KH_2(E_\red) \ar[r] & {KH_2(E'_\red)} \ar[r] &  SKH_1(D) \ar[r] &
     SKH_1(D^n) \ar[r] & 0.}
   \end{equation}

The left vertical arrow is surjective, since $K_2(D^n)=KH_2(D^n)$ (because $D^n$ is regular) and $K_2(E)\to K_2(E_{\rm red}) = KH_2(E_{\rm red})$ is surjective because $E$ is semi-local.  The  second vertical arrow (from left) 
is an isomorphism, since $E'_{\rm red}$ is regular. Similarly, the right vertical arrow is an isomorphism. A diagram chase now finishes the proof.
\end{proof}

\begin{lem}\label{lem:D-SK-1-ex}
  The map $SK_1(D) \to SKH_1(D)$ is an isomorphism if $k$ is infinite and $D$ is a normal
  crossing curve.
\end{lem}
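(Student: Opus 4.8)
The plan is to reduce Lemma~\ref{lem:D-SK-1-ex} to the setting of Lemma~\ref{lem:D-SK-1} by exploiting the special structure of a normal crossing curve, namely that its normalization is a disjoint union of smooth curves and that the conductor can be taken to be reduced. First I would take $\pi \colon D^n \to D$ to be the normalization morphism and let $E \subset D$ be the conducting subscheme; since $D$ is a normal crossing curve, $D$ is seminormal, so by \cite[Prop.~4.2(1)]{Krishna-torsion} we may choose $E$ (hence $E' = \pi^*(E)$) to be \emph{reduced}, and $E$ is then a $0$-dimensional smooth $k$-scheme, i.e. a finite disjoint union of spectra of finite separable (as $k$ is not assumed perfect here, one must be a touch careful) extensions of $k$. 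Actually the point of the hypothesis that $k$ is infinite (rather than perfect) is precisely that one needs a different argument to kill $\Omega^1_{E'/E}$: here $E$ and $E'$ are reduced $0$-dimensional, but $E'/E$ need not be separable. However, $E'$ being regular of dimension $0$ means $E'$ is itself a finite product of fields, and the key structural input is that $K_2(E') = KH_2(E')$ and $SK_1(E') = SKH_1(E') = 0$ because $E'$ is regular semilocal, and likewise for $E$.

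The cleaner route, which I expect the authors take, is: since $D$ and $D^n$ are curves, the formula $K_1(D,D^n,E) \cong \sI_E/\sI^2_E \otimes_{E'} \Omega^1_{E'/E}$ from \cite{Geller-Weibel} still applies, but now with $E$ reduced; the issue is whether $\Omega^1_{E'/E} = 0$. With $k$ merely infinite this can fail, so instead of killing the double-relative term one compares it directly with its $KH$-analogue. Since $E$, $E'$ are $0$-dimensional reduced $k$-schemes they are already seminormal, so $KH_*(E) = K_*(E)$ and $KH_*(E') = K_*(E')$ — wait, that is not quite right either, as $0$-dimensional reduced schemes over an imperfect field need not be regular. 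The honest statement one uses is: a reduced Noetherian ring of Krull dimension $0$ is a finite product of fields, hence regular, hence $K$-regular, hence $K_*(E) = KH_*(E)$ and $K_*(E') = KH_*(E')$. Thus the double-relative term $K_*(D,D^n,E)$ maps isomorphically to $KH_*(D,D^n,E)$ in the degrees we need (degrees $\le 1$), because its $KH$-analogue is built from the same $K$-groups of $E$, $E'$, $D^n$ (all regular or regular-after-the-cdh-replacement), and $D^n$ is regular so $K_*(D^n) = KH_*(D^n)$. Therefore the Mayer--Vietoris / relative sequences computing $SK_1(D)$ and $SKH_1(D)$ become term-by-term isomorphic, and a diagram chase — exactly as at the end of the proof of Lemma~\ref{lem:D-SK-1}, comparing \eqref{eqn:D-SK-ex-2} with \eqref{eqn:D-SK-ex-4} — gives the result.

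Concretely the steps are: (i) choose the reduced conductor $E$ and its preimage $E'$, both $0$-dimensional reduced, hence regular, hence $K$-regular; (ii) write the localization/Mayer--Vietoris exact sequence relating $SK_1(D)$, $SK_1(D^n)$, $K_2(D^n)$, $K_2(E)$, $K_2(E')$, using that $K_1(D,D^n,E) \cong \sI_E/\sI_E^2 \otimes \Omega^1_{E'/E}$ and — crucially — that this double-relative group \emph{agrees with its $KH$ version} since all schemes involved are $K$-regular or regular; (iii) write the cdh-descent Mayer--Vietoris sequence for $KH$ \`a la \cite[Cor.~IV.12.6]{Weibel} relating $SKH_1(D)$, $SKH_1(D^n)$, $KH_2(D^n)$, $KH_2(E)$, $KH_2(E')$; (iv) observe the comparison maps on $K_2(D^n) = KH_2(D^n)$ (regular), $K_2(E) = KH_2(E)$, $K_2(E') = KH_2(E')$ (regular $0$-dimensional), and $SK_1(D^n) = SKH_1(D^n)$ (regular curve) are isomorphisms, and conclude by the five lemma / a diagram chase that $SK_1(D) \to SKH_1(D)$ is an isomorphism.

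The main obstacle — and the reason this needs its own lemma rather than falling under Lemma~\ref{lem:D-SK-1} — is handling the possibly inseparable residue field extensions at the nodes when $k$ is imperfect: one cannot argue $\Omega^1_{E'/E} = 0$ as in case (1) there. The fix is not to compute the double-relative term at all but to match it with its $KH$-counterpart, which works because the relevant $0$-dimensional reduced schemes are automatically regular (hence $K$-regular), so the whole relative $K$-theory square is unchanged by passing to $KH$. I would need the hypothesis that $k$ is infinite presumably to invoke a Bertini-type argument guaranteeing the nodes of $D$ lie on a convenient affine (so that $E$ is affine/semilocal and the Geller--Weibel computation and the semilocal vanishing $SK_1(E) = SKH_1(E) = 0$ apply), and to ensure $D^n$ is a disjoint union of smooth (not merely regular) curves so its $K$- and $KH$-theories coincide. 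The diagram chase itself is routine once all four outer vertical maps are known to be isomorphisms.
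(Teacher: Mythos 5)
Your proposal has a genuine gap at its pivotal step. You claim that the double-relative term $K_*(D,D^n,E)$ maps isomorphically to $KH_*(D,D^n,E)$ ``because its $KH$-analogue is built from the same $K$-groups of $E$, $E'$, $D^n$'' and these are regular. That is not how double-relative groups work: they measure the \emph{failure of excision}, which is extra data not determined by the $K$-groups of the four corner schemes. By cdh-descent one has $KH_1(D,D^n,E)=0$, while $K_1(D,D^n,E)\cong \sI_E/\sI^2_E\otimes_{E'}\Omega^1_{E'/E}$ by Geller--Weibel; so your claimed isomorphism is \emph{equivalent} to the vanishing $\Omega^1_{E'/E}=0$ --- precisely the statement you declared might fail and set out to circumvent. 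As written, the argument is circular. (In fact, for a normal crossing curve in the sense of this paper the vanishing does hold: triple intersections are empty, at most two smooth branches meet at each singular point, the pairwise intersections are smooth over $k$, so the reduced conductor $E$ is smooth, $E'\cong E\amalg E$, and $E'\to E$ is \'etale. If you supplied that computation your normalization/conductor strategy would go through and would give a correct alternative proof in the style of Lemma~\ref{lem:D-SK-1}(1); but that is the step you skipped, and it is the whole content.)

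You should also be aware that the paper's proof is entirely different and that your guess about where ``$k$ infinite'' enters is off the mark. The paper reduces, via \eqref{eqn:SS-curve-*}, to showing $H^1_\zar(D,\sK_{2,D})\to H^1_\cdh(D,\sK_{2,D})$ is an isomorphism, and argues by induction on the number of irreducible components, comparing Zariski and cdh Mayer--Vietoris sequences for the cover $\{D_1\amalg D_2\to D\}$. Exactness of the Zariski sequence uses Kerz's identification of the Quillen and Milnor $\sK_2$-sheaves --- this is where the hypothesis that $k$ is infinite is actually used, not a Bertini argument --- together with Kato--Saito's surjectivity for relative Milnor $K$-sheaves; the surjectivity of the remaining vertical arrow comes from Lemma~\ref{lem:NC-K2}, i.e.\ $K_2\surj KH_2$ for normal crossing curves via $K_1$-regularity (Lemma~\ref{lem:K1-reg}). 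None of these ingredients appears in your proposal.
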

\begin{proof}
By \eqref{eqn:SS-curve-*},  the lemma is equivalent to showing that the map   
\begin{equation}\label{eqn:D-SK-1-0}
H^1_\zar(D, \sK_{2,D}) \to H^1_\cdh(D, \sK_{2,D})
  \end{equation}
  is an isomorphism.
Let $\mu(D)$ denote the number of irreducible components of $D$.
  We shall prove the above isomorphism of cohomology groups by induction on $\mu(D)$.
  If $\mu(D) = 1$, then $D$ is regular in which case ~\eqref{eqn:D-SK-1-0} is clear,
  since $K_1(D)=KH_1(D)$.
  Otherwise, we let $D_1$ be an irreducible component of $D$ and let $D_2$ be the
  scheme theoretic closure of $D \setminus D_1$ in $D$. Then $D_2$ is a normal
  crossing curve such that $\mu(D_2) < \mu(D)$ and $D_3 := D_1 \cap D_2$ is a
  0-dimensional smooth $k$-scheme.

We now consider the commutative diagram  
  of Zariski sheaves
  \begin{equation}\label{eqn:D-SK-1-1}
    \xymatrix@C.8pc{
      0 \ar[r] & {\sK_{2, (D,D_2)}} \ar[r] \ar[d] & \sK_{2, D} \ar[r] \ar[d] &
      \sK_{2, D_2} \ar[r] \ar[d] & 0 \\
      0 \ar[r] & {\sK_{2, (D_1,D_3)}} \ar[r] & \sK_{2, D_1} \ar[r] &
      \sK_{2, D_3} \ar[r] & 0,}
    \end{equation}
    where the  terms ${\sK_{2, (D,D_2)}}$ and ${\sK_{2, (D_1,D_3)}}$  in the first and the second row are defined to be the kernels of the  right horizontal arrows. 
    Since $k$ is infinite, the Quillen $\sK_2$-sheaf coincides with the Milnor
    $\sK^M_2$-sheaf on the big Zariski site of $\Sch_k$ (see \cite[Prop.~2]{Kerz-JAG}).      In particular, sections of $\mathcal{K}_2$ are given by symbols, thus the two right horizontal arrows of \eqref{eqn:D-SK-1-1} are indeed surjective and thus both rows in \eqref{eqn:D-SK-1-1} are exact.
    
By \cite[Prop.~2]{Kerz-JAG} again, the terms ${\sK_{2, (D,D_2)}}$ and ${\sK_{2, (D_1,D_3)}}$  coincide with the relative Milnor $K$-sheaves of Kato-Saito \cite[1.3]{Kato-Saito-86}. In particular,   the map $\sK^M_{2, (D,D_2)} \to \sK^M_{2, (D_1,D_3)}$ is surjective by
\cite[Lem.~1.3.1]{Kato-Saito-86}. Furthermore, the kernel of this map is supported on
$D_3$. It follows that the left vertical arrow in the above diagram is surjective whose
kernel is supported on $D_3$. In particular, the induced map between the first Zariski
cohomology groups is an isomorphism.

Using a diagram chase of the cohomology groups induced by ~\eqref{eqn:D-SK-1-1}, we
    therefore get a Mayer-Vietoris exact sequence for the Zariski cohomology in low degrees:
   \begin{equation}\label{eqn:D-SK-1-2} 
     {\begin{array}{l}
       H^0_\zar(D_1, \sK_{2,D_1}) \\
       \hspace*{1.2cm} \oplus \\
       H^0_\zar(D_2, \sK_{2,D_2})
     \end{array}}
   \to
     H^0_\zar(D_3, \sK_{2,D_3}) \to H^1_\zar(D, \sK_{2,D}) \to 
     {\begin{array}{l}
       H^1_\zar(D_1, \sK_{2,D_1}) \\
       \hspace*{1.2cm} \oplus \\
        H^1_\zar(D_2, \sK_{2,D_2})
        \end{array}}
       \to 0.
     \end{equation}

We next observe that the square
    \begin{equation}\label{eqn:D-SK-1-3}
         \xymatrix@C.8pc{
           D_3 \ar[r] \ar[d] & D_1 \ar[d] \\
           D_2 \ar[r] & D}
       \end{equation}
       defines a cdh cover $\{D_1 \amalg D_2 \to D\}$ of $D$,
       so that we have an  associated exact sequence  of
       cdh cohomology groups similar to ~\eqref{eqn:D-SK-1-2}.
       Comparing these two sequences, we get a commutative diagram of exact sequences

\begin{equation}\label{eqn:D-SK-1-4}
         \xymatrix@C.8pc{
{\begin{array}{l}
       H^0_\zar(D_1, \sK_{2,D_1}) \\
       \hspace*{1.2cm} \oplus \\
       H^0_\zar(D_2, \sK_{2,D_2})
     \end{array}}
   \ar[r] \ar[d] & 
     H^0_\zar(D_3, \sK_{2,D_3}) \ar[r] \ar[d] & H^1_\zar(D, \sK_{2,D}) \ar[r] \ar[d] & 
     {\begin{array}{l}
       H^1_\zar(D_1, \sK_{2,D_1}) \\
       \hspace*{1.2cm} \oplus \\
        H^1_\zar(D_2, \sK_{2,D_2})
      \end{array}} \ar[r] \ar[d] & 0 \\
 {\begin{array}{l}
       H^0_\cdh(D_1, \sK_{2,D_1}) \\
       \hspace*{1.2cm} \oplus \\
       H^0_\cdh(D_2, \sK_{2,D_2})
     \end{array}}
   \ar[r] & 
     H^0_\cdh(D_3, \sK_{2,D_3}) \ar[r]  & H^1_\cdh(D, \sK_{2,D}) \ar[r] & 
     {\begin{array}{l}
       H^1_\cdh(D_1, \sK_{2,D_1}) \\
       \hspace*{1.2cm} \oplus \\
        H^1_\cdh(D_2, \sK_{2,D_2})
      \end{array}} \ar[r]  & 0.}
\end{equation}

The left vertical arrow is surjective by Lemma~\ref{lem:NC-K2}. The second vertical
arrow (from left) is an isomorphism because $D_3$ is smooth. The right vertical arrow
is an isomorphism by induction on $\mu(D)$. By a diagram chase, it follows that
~\eqref{eqn:D-SK-1-0} is an isomorphism. This concludes the proof of the
lemma.
\end{proof}

\subsection{Algebraic and homotopy $K_0$-groups of the double}
\label{sec:curves}
We continue with the set-up described in \S~\ref{sec:K1-curve}.
In this subsection, we shall compare $SK_0(S_X)$ with the analogous
subgroup of $KH_0(S_X)$. Let $\pi \colon S^n_X \to S_X$ denote the normalization map.

\begin{lem}\label{lem:Excision-K}
      There exists an exact sequence
      \[
        0 \to \frac{SK_1(S^n_X)}{SK_1(S_X)} \to SK_1(D) \to SK_0(S_X) \xrightarrow{\pi^*}
      SK_0(S^n_X) \to 0.
    \]
  \end{lem}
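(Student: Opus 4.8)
The plan is to set up a Mayer--Vietoris comparison from the bi-Cartesian square defining the double $S_X = X_+ \amalg_D X_-$, whose normalization is $S^n_X = X_+ \amalg X_-$ with conductor $D$ pulling back to $D \amalg D$. The key input is the Thomason--Trobaugh descent/Mayer--Vietoris sequence associated to the Milnor square
\[
  \xymatrix@C1pc{
    D \amalg D \ar[r] \ar[d] & X_+ \amalg X_- \ar[d] \\
    D \ar[r] & S_X,}
\]
which yields a long exact sequence relating $K_*(S_X)$, $K_*(S^n_X) = K_*(X_+) \oplus K_*(X_-)$, $K_*(D \amalg D) = K_*(D) \oplus K_*(D)$, and $K_*(D)$. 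Since $X_\pm$ are regular, $K_*(X_\pm) = KH_*(X_\pm)$, and the two copies of $D$ mapping to the single $D$ differ by the two closed immersions $\iota_\pm$; the difference map on the relative theory is what produces a clean ``reduced'' Mayer--Vietoris sequence. Concretely, using the splitting $\Delta^* \colon K_*(X) \to K_*(S_X)$ coming from the retraction $\Delta \colon S_X \to X$, the sequence should collapse to
\[
  \cdots \to K_1(S^n_X) \to K_1(D) \to K_0(S_X) \xrightarrow{\pi^*} K_0(S^n_X) \to K_0(D) \to \cdots,
\]
where one copy of $D$ has been cancelled against the splitting.

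Next I would cut this down to the ``$S$'' (special) parts. Applying the rank map $\mathrm{rk}$ and the determinant map to each term and using naturality, one gets a corresponding sequence on $\widetilde{K}_0$ and then on $SK_0$. The determinant/rank bookkeeping is routine: $\mathrm{rk}$ on each term is compatible with the maps, so one restricts to kernels of $\mathrm{rk}$, and then the determinant maps into $\Pic$ (resp.\ $H^0(\mathcal{O}^\times)$ in degree one) are likewise compatible, so one restricts further. The upshot is an exact sequence $SK_1(S^n_X) \to SK_1(D) \to SK_0(S_X) \xrightarrow{\pi^*} SK_0(S^n_X) \to 0$, where surjectivity of $\pi^*$ on $SK_0$ follows because the next term $SK_0(D) = 0$ ($D$ is an affine curve, or a normal crossing curve, of dimension $\le 1$, so $\widetilde K_0(D)$ is detected by $\Pic(D)$ and $SK_0(D)=0$ by e.g.\ Bass--Murthy-type vanishing for curves). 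Taking the image of $SK_1(S^n_X) \to SK_1(D)$ into account, the kernel of $SK_1(D) \to SK_0(S_X)$ is exactly $\mathrm{im}(SK_1(S^n_X) \to SK_1(D))$, which equals $SK_1(S^n_X)/(\ker)$; identifying $\ker$ with the image of $SK_1(S_X) \to SK_1(S^n_X)$ (again via the splitting $\Delta^*$, which makes $SK_1(S_X) \to SK_1(S^n_X)$ split injective onto that kernel) gives the stated four-term sequence
\[
  0 \to \frac{SK_1(S^n_X)}{SK_1(S_X)} \to SK_1(D) \to SK_0(S_X) \xrightarrow{\pi^*} SK_0(S^n_X) \to 0.
\]

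The main obstacle I anticipate is \emph{not} the existence of the Mayer--Vietoris sequence itself but the careful identification of the maps after restricting to the $S$-parts --- in particular, checking that the connecting map $SK_1(S^n_X) \to SK_1(D)$ lands where claimed and that the left-hand term really is the quotient $SK_1(S^n_X)/SK_1(S_X)$ rather than something involving $SK_1(D)$ as well. This requires knowing that $SK_1(S_X) \to SK_1(S^n_X)$ is injective, which one gets from the splitting $S_X \to X \to S_X$ together with $SK_1(X) = 0$ ($X$ regular) feeding into the long exact sequence, so that $SK_1(S_X)$ injects into $SK_1(D)$ compatibly, and then chasing the diagram. A secondary point needing care is that $SK_i(E) = 0$ for the semilocal conductor-type schemes and that the difference-of-two-immersions trick correctly collapses the two copies of $D$; both are by now standard (the latter is exactly the mechanism used in \cite{Krishna-torsion} and in the $K$-theory of doubles), so I would cite those rather than re-derive them.
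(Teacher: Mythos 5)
Your overall architecture --- a Mayer--Vietoris sequence for the conductor square of $\pi \colon S^n_X \to S_X$ in the range $K_1 \to K_0$, collapsing the two copies of $D$ in $\pi^{-1}(D) = D \amalg D$, and then cutting down to the $SK$-parts via the rank and determinant maps --- is exactly the route taken by the results the paper cites for this lemma. But the key step is unjustified as you state it. The square with corners $D\amalg D$, $S^n_X$, $D$, $S_X$ is an abstract blow-up (conductor) square, not a Zariski or Nisnevich square, and Thomason--Trobaugh descent does \emph{not} produce a Mayer--Vietoris sequence in algebraic $K$-theory for such squares: the failure of $K$-theory to satisfy descent for abstract blow-ups is precisely the failure of excision, it is generically nonzero, and it is the reason the rest of \S~2 of the paper must compare $K$ with $KH$ (which does satisfy cdh-descent) at all. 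The six-term sequence you start from is exact at the $K_1$-spots --- which is where you need it, both for exactness at $SK_1(D)$ and for the injectivity of $SK_1(S^n_X)/SK_1(S_X) \to SK_1(D)$ --- only when excision holds for the relative $K_0$ of the conductor square. By Geller--Weibel the obstruction is the double relative group $K_1(S_X, S^n_X, D) \cong \sI_D/\sI^2_D \otimes_{D'} \Omega^1_{D'/D}$ with $D' = D\amalg D$, and the entire content of the paper's (one-line) proof is the observation that this vanishes because $D' \to D$ is the identity on each component, so $\Omega^1_{D'/D} = 0$. Your proposal never verifies, or even mentions, this excision input, so the exact sequence feeding the rest of your argument is not established.

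Two secondary points. First, your claim that $SK_1(X) = 0$ because $X$ is regular is false: for a regular surface $SK_1(X) \cong H^1_\zar(X, \sK_{2,X})$, which is in general very large, so your argument for the injectivity of $SK_1(S_X) \to SK_1(S^n_X)$ does not work. Fortunately that injectivity is not needed: the term $SK_1(S^n_X)/SK_1(S_X)$ in the statement denotes the cokernel of $\pi^* \colon SK_1(S_X) \to SK_1(S^n_X)$, which is what the exactness at $SK_1(S^n_X)$ delivers directly. Second, $D$ need not be affine (it is a quasi-projective curve), but $SK_0(D) = SK_0(D \amalg D) = 0$ holds for any one-dimensional scheme by the descent spectral sequence, so your surjectivity argument for $\pi^*$ on $SK_0$ survives. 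The remaining bookkeeping (cancelling the diagonal copy of $K_*(D)$ inside $K_*(D\amalg D)$ and restricting to kernels of rank and determinant) is routine, as you say.
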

\begin{proof} This is a consequence of (the proof of)
    \cite[Thm. 3.3]{BPW} or  \cite[Prop.~2.7]{Gupta-Krishna-BF}, noting that we can choose $Y$ (in the notation of op. cit.) to be $D$.    The claimed exact sequence exists if excision holds for the $K_0$. The obstruction for this excision
    is controlled by ${\sI_D}/{\sI^2_D} \otimes_{D'} \Omega^1_{D'/D}$.  As $D' = D \amalg D$, this term vanishes.
\end{proof}

\begin{remk}\label{remk:Non-reduced}
  It is worth noting that the proof of Lemma \ref{lem:Excision-K} did not use our
  assumption that $D$ is reduced. Hence, the lemma remains valid for any
  effective Cartier divisor $D$.
  \end{remk}

  The analogue of Lemma~\ref{lem:Excision-K} also holds for the $KH$-groups
  by the cdh-descent, as we show now. We consider the abstract blow-up square
\begin{equation}\label{eqn:K-KH-0}
        \xymatrix@C.8pc{
          D \amalg D \ar[r] \ar[d] & S^n_X \ar[d]^-{\pi} \ar[r]^-{\cong} & X_+ \amalg X_- 
          \ar[dl] \\
          D \ar[r] & S_X}
      \end{equation}
Applying the spectral sequence ~\eqref{eqn:SS} and the cdh-excision \cite[IV.12.6]{Weibel} to this abstract
      blow-up square, we get a commutative diagram of exact sequences
      \begin{equation}\label{eqn:Ex-KH-0}
        \xymatrix@C.8pc{
          KH_1(S_X) \ar[r] \ar[d] &  KH_1(S^n_X) \ar[r] \ar[d] & KH_1(D) \ar[r] \ar[d] &
          \wt{KH}_0(S_X) \ar[r] \ar[d] & \wt{KH}_0(S^n_X) \ar[d] \\
          H^0_\cdh(S_X, \sO^{\times}_{S_X}) \ar[r] & H^0_\cdh(S^n_X, \sO^{\times}_{S^n_X}) \ar[r] &
          H^0_\cdh(D, \sO^{\times}_{D}) \ar[r] &
          H^1_\cdh(S_X, \sO^{\times}_{S_X}) \ar[r] &  H^1_\cdh(S^n_X, \sO^{\times}_{S^n_X}).}
        \end{equation}

        It follows from Lemma~\ref{lem:K1-split} that the first three vertical arrows from
        left in ~\eqref{eqn:Ex-KH-0} are surjective. 
The map $H^0_\cdh(S_X, \sO^{\times}_{S_X}) \to H^0_\cdh(S^n_X, \sO^{\times}_{S^n_X})$ is
 clearly injective because $S_X$ is reduced and $S^n_X \to S_X$ is a cdh cover.
Using a diagram chase and taking the kernels of the vertical arrows,
       we get an exact sequence
      \begin{equation}\label{eqn:Ex-KH-1} 
        0 \to \frac{SKH_1(S^n_X)}{SKH_1(S_X)} \to SKH_1(D) \to SKH_0(S_X) \to SKH_0(S^n_X).
      \end{equation}

      \vskip .3cm
      
The main result of \S~\ref{sec:K-thry} is the following.

\begin{prop}\label{prop:K-KH}
The map $SK_0(S_X) \to SKH_0(S_X)$  is an isomorphism under any of the following
    conditions.
    \begin{enumerate}
    \item
      $k$ is perfect and $D$ is seminormal.
    \item
      $k \subseteq \ov{\Q}$.
    \item
     $k$ is infinite and $D$ is a normal crossing curve.
    \end{enumerate}
  \end{prop}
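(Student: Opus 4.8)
The plan is to run a diagram chase on the commutative ladder relating the exact sequence of Lemma~\ref{lem:Excision-K} to its $KH$-analogue~\eqref{eqn:Ex-KH-1}, the vertical maps being those induced by the natural transformation $K \to KH$. Thus I would place side by side the top row
\[
0 \to \tfrac{SK_1(S^n_X)}{SK_1(S_X)} \to SK_1(D) \xrightarrow{\partial} SK_0(S_X) \xrightarrow{\pi^*} SK_0(S^n_X) \to 0
\]
and the bottom row
\[
0 \to \tfrac{SKH_1(S^n_X)}{SKH_1(S_X)} \to SKH_1(D) \to SKH_0(S_X) \to SKH_0(S^n_X),
\]
noting that the top row is exact throughout (including right-exactness at $SK_0(S^n_X)$), while the bottom row is exact except possibly at its rightmost term; this asymmetry is harmless for what follows.

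The inputs I would assemble are: (a) the map $SK_1(D) \to SKH_1(D)$ is an isomorphism — this is Lemma~\ref{lem:D-SK-1} in cases (1) and (2), and Lemma~\ref{lem:D-SK-1-ex} in case (3); (b) $S^n_X$ is canonically $X_+ \amalg X_-$ and hence regular, so $K(S^n_X) \to KH(S^n_X)$ is a weak equivalence and therefore $SK_i(S^n_X) \to SKH_i(S^n_X)$ is an isomorphism for all $i$; (c) the map $\pi^* \colon SK_0(S_X) \to SK_0(S^n_X)$ is surjective, by Lemma~\ref{lem:Excision-K}; and (d) the induced map $\tfrac{SK_1(S^n_X)}{SK_1(S_X)} \to \tfrac{SKH_1(S^n_X)}{SKH_1(S_X)}$ is well defined and surjective. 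For (d): the square relating $SK_1(S_X) \to SK_1(S^n_X)$ to $SKH_1(S_X) \to SKH_1(S^n_X)$ commutes, so the image of $SK_1(S_X)$ in $SK_1(S^n_X) \cong SKH_1(S^n_X)$ lies in the image of $SKH_1(S_X)$; this makes the map on quotients well defined, and it is surjective because $SK_1(S^n_X) \to SKH_1(S^n_X)$ is onto by (b).

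With these in hand, surjectivity of $SK_0(S_X) \to SKH_0(S_X)$ is immediate: given $z \in SKH_0(S_X)$, use (b) and (c) to find $x \in SK_0(S_X)$ whose image in $SKH_0(S^n_X)$ agrees with that of $z$; then $z$ minus the image of $x$ dies in $SKH_0(S^n_X)$, hence equals the boundary of some $\bar y \in SKH_1(D)$; lifting $\bar y$ to $SK_1(D)$ via (a) and adding its boundary to $x$ gives a preimage of $z$. For injectivity, suppose $x \in SK_0(S_X)$ dies in $SKH_0(S_X)$; then $\pi^*(x) = 0$ by (b), so $x = \partial(y)$ for some $y \in SK_1(D)$; the image $\bar y$ of $y$ satisfies $\partial(\bar y) = 0$ in $SKH_0(S_X)$, so by exactness of the bottom row $\bar y$ comes from $\tfrac{SKH_1(S^n_X)}{SKH_1(S_X)}$, which by (d) is hit by a class in $\tfrac{SK_1(S^n_X)}{SK_1(S_X)}$; its image $y'$ in $SK_1(D)$ has the same image $\bar y$ in $SKH_1(D)$, so $y = y'$ by the injectivity in (a), and since $y'$ lies in the kernel of $\partial$ we conclude $x = \partial(y') = 0$.

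The genuine mathematical content — the comparison of $SK_1$ and $SKH_1$ for the reduced curve $D$, where the hypotheses on $k$ and $D$ enter — has already been isolated in Lemmas~\ref{lem:D-SK-1} and~\ref{lem:D-SK-1-ex}, so within the proof of this proposition the remaining work is bookkeeping. The one point that genuinely needs to be checked rather than assumed is the well-definedness and surjectivity (d) of the map on the quotient terms $\tfrac{SK_1(S^n_X)}{SK_1(S_X)} \to \tfrac{SKH_1(S^n_X)}{SKH_1(S_X)}$; the regularity of the normalization $S^n_X$ is precisely what makes this, and the whole chase, go through uniformly in all three cases.
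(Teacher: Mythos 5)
Your proposal is correct and follows essentially the same route as the paper: both compare the excision sequence of Lemma~\ref{lem:Excision-K} with its $KH$-analogue~\eqref{eqn:Ex-KH-1}, feed in Lemmas~\ref{lem:D-SK-1} and~\ref{lem:D-SK-1-ex} for the middle vertical arrow and the regularity of $S^n_X$ for the outer ones, and conclude by a diagram chase. Your write-up merely makes explicit the chase (and the well-definedness of the map on the quotient terms) that the paper leaves to the reader.
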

  \begin{proof}
    A comparison of ~\eqref{eqn:Ex-KH-1}  with the exact sequence of Lemma
    \ref{lem:Excision-K} gives rise to a commutative diagram of exact sequences
\begin{equation}\label{eqn:Excision-K-KH}
  \xymatrix@C.8pc{
    0 \ar[r] & \frac{SK_1(S^n_X)}{SK_1(S_X)} \ar[r] \ar@{->>}[d] & SK_1(D) \ar[r]
    \ar[d]^-{\cong} &
  SK_0(S_X) \ar[r]^-{\pi^*} \ar[d] & SK_0(S^n_X) \ar[r] \ar[d]^-{\cong} &  0 \\
  0 \ar[r] & \frac{SKH_1(S^n_X)}{SKH_1(S_X)} \ar[r] & SKH_1(D) \ar[r] & SKH_0(S_X)
  \ar[r]^-{\pi^*} & SKH_0(S^n_X) \ar[r] & 0.}
\end{equation}

Note that $\pi^*$ on the bottom is surjective because the same holds for the
corresponding arrow on the top and the right vertical arrow is an isomorphism by the
regularity of the scheme $S^n_X$. The latter also implies that the left vertical arrow is
surjective. The vertical arrow involving $D$ is an isomorphism by Lemmas
\ref{lem:D-SK-1} and ~\ref{lem:D-SK-1-ex}.
The desired assertion now follows by a diagram chase.
\end{proof}

\section{Proof of the main result}\label{sec:Pf-main}
In this section, we shall prove our main result Theorem \ref{thm:Main}. We shall
also give proofs of some of its applications.
We begin by recalling the definitions of the Chow group of 0-cycles with modulus and
Suslin homology. To prove Theorem \ref{thm:Main}, we shall use two other 0-cycle groups,
namely, the Levine-Weibel Chow group and its modified version called the lci Chow group of
the double. We shall recall these too. We fix a field $k$.

\subsection{Review of Chow group with modulus and Suslin homology}
\label{sec:CHMS}
Let $X$ be an integral quasi-projective $k$-scheme of dimension $d \ge 1$ and let
$D \subset X$ be an effective Cartier divisor. Let $j \colon U \inj X$ be the inclusion of
the complement of $D$ in $X$. Assume that $U$ is regular.
Recall from \cite[\S~1]{Kerz-Saito-Duke} that the Chow group of
0-cycles on $X$ with modulus $D$ is the quotient of $\sZ_0(U)$ by the subgroup $\sR_0(X|D)$
generated
by $\nu_*(\divf(f))$, where $\nu \colon C \to X$ is a finite (and birational to its image)
morphism from an integral normal curve $C$ whose image is not contained in $D$ and $f \in
\Ker(\sO^{\times}_{C, \nu^{-1}(D)} \surj \sO^{\times}_{\nu^*(D)})$. This group is denoted by
$\CH_0(X|D)$.

Recall that the Suslin-Voevodsky singular homology $H^{S}_n(U)$ of $U$
(also called Suslin homology in the literature)  is defined as the $n$-th homology of a
certain explicit complex of algebraic cycles, introduced by Suslin and
Voevodsky \cite{SuslinVoevSingular}.
We do not need to recall this complex. Instead, we shall use the following
equivalent definition of $H^S_0(U)$ in this paper. This equivalence was shown by
Schmidt \cite[Thm.~5.1]{Schmidt-ant}.

\begin{lem}\label{lem:Schmidt}
Assume that $X$ is projective.  Then $H^S_0(U)$ is canonically isomorphic to
the quotient of $\sZ_0(U)$ by the subgroup $\sR^S_0(U)$ generated
by $\nu_*(\divf(f))$, where $\nu \colon C \to X$ is a finite (and birational to its image)
morphism from an integral
normal curve $C$ whose image is not contained in $D$ and $f \in
\Ker(\sO^{\times}_{C, \nu^{-1}(D)} \surj \sO^{\times}_{\nu^*(D)_\red})$. 
\end{lem}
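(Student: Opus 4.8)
The claim is a reformulation of \cite[Thm.~5.1]{Schmidt-ant}, and the plan is to spell out the dictionary between the two presentations of $H^S_0(U)$. Recall that $H^S_0(U)$ is, by definition, the cokernel of the boundary map $\partial = \partial_0 - \partial_1 \colon C_1(U) \to C_0(U)$ of the Suslin (a.k.a.\ algebraic singular) complex, where $C_0(U) = \sZ_0(U)$ and $C_1(U)$ is the free abelian group on the integral closed subschemes $W \subseteq U \times \Delta^1$ which are finite and surjective over $\Delta^1 \cong \A^1$, the two vertices being placed at $0$ and $1$. Since such a $W$ is a reduced curve finite over the regular curve $\Delta^1$, it is automatically flat over $\Delta^1$, so $\partial_i W$ is the scheme-theoretic fibre of $W$ over the vertex $i$, viewed as a $0$-cycle on $U \times \{i\} = U$. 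Thus the statement amounts to the equality of subgroups $\sR^S_0(U) = \im(\partial)$ inside $\sZ_0(U)$, and I would prove the two inclusions separately.

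For $\sR^S_0(U) \subseteq \im(\partial)$, let $\nu \colon C \to X$ and $f$ be as in the statement (we may assume $f$ non-constant, as otherwise $\divf(f) = 0$). Regard $f$ as a finite morphism $C \to \P^1$ and compose it with the involution $w \mapsto w/(w-1)$ of $\P^1$, which interchanges $1$ and $\infty$ and fixes $0$, to obtain $h \colon C \to \P^1$. Because $f \equiv 1$ along $\nu^{-1}(D)$, the morphism $(\nu, h) \colon C \to X \times \P^1$ carries $\nu^{-1}(D)$ into $D \times \{\infty\}$; since $C$ is proper, its image $\Gamma$ is closed and meets $D \times \P^1$ only inside $D \times \{\infty\}$. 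Hence $W := \Gamma \cap (U \times \A^1)$ coincides with $\Gamma \cap (X \times \A^1)$, is closed in $U \times \A^1$, and is finite and surjective over $\A^1$, i.e.\ $W \in C_1(U)$. Computing the fibres of $W$ over $0$ and $1$ on the normalization of $C$ (using the flatness above and the projection formula for divisors on curves) gives $\partial W = \pm\,\nu_*(\divf(f))$, since the zero locus of $h$ is the zero divisor of $f$ while the level set $\{h = 1\}$ is the polar divisor of $f$.

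For $\im(\partial) \subseteq \sR^S_0(U)$, take an integral $W \in C_1(U)$; we may assume its closure $\ol W \subseteq X \times \P^1$ is not a vertical line $\{x\}\times\P^1$ (in which case $\partial W = 0$). Let $C \to \ol W$ be the normalization, with induced finite maps $\nu \colon C \to X$ and $g \colon C \to \P^1$. Finiteness and surjectivity of $W$ over $\A^1$ together with integrality of $\ol W$ force $\ol W \cap (X \times \A^1) = W$ (an open immersion over $\A^1$ into a connected, finite $\A^1$-scheme must be an isomorphism), so $\ol W$ meets $D \times \P^1$ only inside $D \times \{\infty\}$. Consequently $g$ has a pole at every point of $\nu^{-1}(D)$, and $f := 1 - g^{-1}$ lies in $\Ker\bigl(\sO^{\times}_{C, \nu^{-1}(D)} \surj \sO^{\times}_{\nu^*(D)_\red}\bigr)$; the same fibre computation yields $\partial W = \pm\,\nu_*(\divf(f))$. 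If $\nu$ is not birational onto its image, one replaces $C$ by the normalization $\ol C$ of the curve $\nu(C)$ — through which $C$ factors by normality of $C$ — and $f$ by its norm $N_{k(C)/k(\ol C)}(f)$, which still lies in the relevant kernel; the projection formula then rewrites $\partial W$ as a generator of $\sR^S_0(U)$. Summing over the components of a general element of $C_1(U)$ gives the inclusion.

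The substantive point — and the reason we simply invoke \cite[Thm.~5.1]{Schmidt-ant} rather than reproduce the argument — is that turning the fibre computations above into genuine identities of $0$-cycles requires the Suslin--Voevodsky formalism of relative cycles, and one must be careful both about the reduced structure on $\nu^*(D)$ and about the reduction to curves that are birational onto their image; all of this is carried out in \cite[\S 5]{Schmidt-ant}. Finally, projectivity of $X$ is used throughout to guarantee that the curves $C$ occurring here are proper, so that $\divf(f)$ has a well-defined push-forward and the closure $\ol W$ is finite over $\P^1$.
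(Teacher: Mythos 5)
Your argument is correct and rests on exactly the same foundation as the paper: the paper gives no proof of this lemma beyond attributing the equivalence to \cite[Thm.~5.1]{Schmidt-ant}, and your graph/normalization dictionary (with the $w \mapsto w/(w-1)$ trick, the norm reduction to curves birational onto their image, and the deferral of the delicate cycle-theoretic identities to Schmidt) is precisely the content of that citation. The sketch itself is sound, so there is nothing to add.
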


It is clear that there is a canonical surjection
\begin{equation}\label{eqn:Surj-0}
  \phi_{X|D} \colon \CH_0(X|D) \surj H^S_0(U).
\end{equation}
From the definition one gets immediately that $\phi_{X|D}$ is an isomorphism if $X$ is of
dimension $1$ and $D$ is reduced. 
It was shown in \cite[Thm.~4.4]{Binda-Krishna-21} that
$\phi_{X|D}$ may have a non-trivial kernel, even if $D$ is reduced and $k$ is algebraically
closed as soon as $\dim(X)\geq 2$, so that the relationship between the two objects is
quite subtle.  This relationship is the main object of study in this paper.

\subsection{Review of Levine-Weibel and lci Chow groups}\label{sec:LWC}
Let $X$ be an equidimensional reduced quasi-projective $k$-scheme of dimension $d \ge 1$.
Let $X_\sing$ denote the singular locus of $X$ with reduced closed subscheme structure and
let $X_\reg$ denote the complement of $X_\sing$ in $X$.
Let $C \subset X$ be a curve (i.e., an equidimensional one-dimensional $k$-scheme).
Recall (see \cite[\S~1]{Levine-Weibel}) that $C$ is called a Cartier curve on $X$ 
if no component of $C$ lies in $X_\sing$, no embedded point of $C$ lies away from
$X_\sing$,  $\sO_{C, \eta}$ is a field if $\ov{\{\eta\}}$ is a component of $C$
disjoint from $X_\sing$ and, $C$ is defined by a regular sequence at every point of
$C \cap X_\sing$. 
We let $k(C, C\cap X_\sing)^{\times}$ be the image of the natural map
$\sO^{\times}_{C,S} \to \stackrel{s}{\underset{i =1}\oplus} \sO^{\times}_{C, \eta_i}$,
where $\{\eta_1, \ldots , \eta_s\}$ is the set of generic points of $C$ and
$S$ is the union of the closed subset 
$C \cap X_\sing$ and the set of generic points $\eta_i$ of $C$ 
such that $\ov{\{\eta_i\}}$ is disjoint from $X_\sing$.

For $f \in k(C, C\cap X_\sing)^{\times}$, we let $\divf(f) =
\stackrel{s}{\underset{i =1}\sum} \divf(f_i)$, where
$f_i$ is the projection of $f$ onto $\sO^{\times}_{C, \eta_i}$, and
$\divf(f_i)$ is the divisor of the restriction of $f_i$ to the maximal Cohen-Macaulay 
subscheme $C_i$ of $C$ supporting $\eta_i$.
If $C$ is reduced, then $k(C, C\cap X_\sing)^{\times} = \sO^{\times}_{C,S}$ and
for $f \in \sO^{\times}_{C,S}$, $\divf(f)$  is the sum of
$\divf(f_i)$, where the sum runs through
the divisors (in the classical sense, see \cite[Chap.~1]{Fulton})
of the restrictions of $f$ to the components of $C$.
The Levine-Weibel Chow group $\CH^{LW}_0(X)$ is
the quotient of $\sZ_0(X_\reg)$ by the subgroup
$\sR^{LW}_0(X)$ generated by $\divf(f)$, where $f \in k(C, C\cap X)_\sing)^{\times}$ for
a Cartier curve $C$ on $X$.

One says that $C$ a good curve (relative to $X_\sing$)
if it is reduced and there is a finite local complete
intersection (lci) morphism $\nu \colon C \to X$ such that $\nu^{-1}(X_\sing)$ is nowhere
dense in $C$. The lci Chow group of 0-cycles $\CH^{\lci}_0(X)$ is the quotient of
$\sZ_0(X_\reg)$ by the subgroup $\sR^{\lci}_0(X)$ generated by $\nu_*(\divf(f))$, where
$\nu \colon C \to X$ is a good curve and $f \in k(C, \nu^{-1}(X_\sing))^{\times}$.
We let $\CH^F_0(X)$ denote the classical homological Chow group of
0-cycles on $X$ as defined in \cite[Chap.~1]{Fulton}.
Clearly, there are canonical maps $\CH^{LW}_0(X) \surj \CH^{\lci}_0(X) \to \CH^F_0(X)$.

\subsection{The fundamental exact sequence}\label{sec:double-00}
Assume that $X$ is a regular quasi-projective scheme and $D \subset X$ is an effective
Cartier divisor with complement $U$.
To prove Theorem \ref{thm:Main}, we shall use the following fundamental exact
sequence (see \cite[Thm.~1.9]{Binda-Krishna} when $k$ is perfect, \cite[Thm.~2.11]{BKS} if $\dim(X)=2$ and \cite[Thm.~1.1]{GKR}
in the general case).

\begin{thm}\label{thm:Double-ex}
There is a split short exact sequence
  \[
    0 \to \CH_0(X|D) \xrightarrow{p_*} \CH^{\lci}_0(S_X) \xrightarrow{\iota^*} \CH^F_0(X)
    \to 0.
  \]
\end{thm}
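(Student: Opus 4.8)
The statement is the \emph{fundamental exact sequence} for the double, so my plan is to follow the strategy of \cite{GKR} (which handles arbitrary $k$ and $d$; the special cases of perfect $k$ and of $d = 2$ being \cite{Binda-Krishna} and \cite{BKS}). Write $S_X = X_+ \amalg_D X_-$ with closed immersions $\iota_\pm \colon X \to S_X$, normalization $\pi \colon S^n_X = X_+ \amalg X_- \to S_X$ with conductor $D$ and $\pi^*(D) = D \amalg D$, and the finite flat degree-two map $\Delta \colon S_X \to X$ with $\Delta \circ \iota_\pm = \id$. Note that $U_+ := X_+ \setminus D = S_X \setminus X_-$ is an open regular subscheme of $S_X$, and likewise $U_-$, so that $(S_X)_\reg = U_+ \amalg U_-$. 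I would take $p_* = (\iota_-)_*$: a $0$-cycle with modulus is supported on $U = U_- \subseteq (S_X)_\reg$, hence defines a class in $\CH^{\lci}_0(S_X)$. The map $\iota^*$ is restriction of a $0$-cycle on $(S_X)_\reg$ to its $U_+$-part, viewed in $\sZ_0(X_\reg)$; it visibly kills $\im(p_*)$ because $X_+ \cap X_- = D$. For the splitting I would use $s = (\iota_+)_*$ composed with a moving lemma on the regular quasi-projective $X$ that represents each class of $\CH^F_0(X)$ by a $0$-cycle disjoint from $D$ (so its image under $(\iota_+)_*$ lies on $(S_X)_\reg$); then $\iota^* \circ s = \id$ holds formally.

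\textbf{Well-definedness of $p_*$ and $s$.} Given a curve with modulus $\nu \colon C \to X$ (with $C$ normal integral, image not in $D$) and $f$ in the kernel of $\sO^{\times}_{C,\nu^{-1}(D)} \surj \sO^{\times}_{\nu^*(D)}$, I would form the double $S_C = C_+ \amalg_E C_-$ along $E := \nu^{-1}(D)$; by \S\ref{sec:Double} the induced $\nu' \colon S_C \to S_X$ is finite and lci, so $S_C$ is a good curve over $S_X$. The modulus condition is exactly what makes $f$ (on $C_+$) and the constant $1$ (on $C_-$) glue to a rational function $\wt f$ on $S_C$ that is regular and invertible along $(S_C)_\sing = E$; hence $\wt f$ gives an element of $k(S_C,(\nu')^{-1}((S_X)_\sing))^{\times}$ with $\nu'_*(\divf \wt f) = (\iota_-)_* \nu_*(\divf f)$, which therefore lies in $\sR^{\lci}_0(S_X)$. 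Thus $p_*$ descends to Chow groups. Similarly, a reduced curve on $X$ disjoint from $D$, pushed into $S_X$ via $\iota_+$, lands in the regular open $U_+ \subseteq S_X$ and gives a good curve, so $s$ is well defined modulo the moving lemma. Consequently $\iota^* p_* = 0$ and $\iota^* s = \id$, which gives $\im p_* \subseteq \ker \iota^*$ and shows that $\iota^*$ is split surjective.

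\textbf{The main obstacle.} What remains — and is the heart of the matter — is that $p_*$ is injective and $\ker \iota^* \subseteq \im p_*$. For the latter, given $w \in \CH^{\lci}_0(S_X)$ with $\iota^*(w) = 0$, I would pick a representative $w_+ + w_-$ with $w_\pm$ supported on $U_\pm$, note that $w_+$ dies in $\CH^F_0(X)$, kill it by a rational equivalence coming from good curves lying in the regular open $U_+ \subseteq S_X$ (using a moving lemma on $X$ to keep the equivalence away from $D$), and conclude $w = p_*([w_-])$. Both this compatibility and the injectivity of $p_*$ reduce to showing that a rational equivalence on $S_X$ carried by an arbitrary good curve $\nu \colon C \to S_X$ can be \emph{separated along the two components}: passing to the normalization of $C$ and to the preimage of the conductor $D$, one decomposes such a relation into a relation on $X_+$, a relation on $X_-$, and a genuine modulus relation on $(X,D)$. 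This delicate Cartier-curve and moving-lemma bookkeeping is precisely the decomposition argument carried out in \cite{GKR} (and in \cite{BKS} when $d=2$, \cite{Binda-Krishna} when $k$ is perfect), and I would invoke it. A final diagram chase then combines the resulting exactness with the section $s$ to yield the asserted split short exact sequence.
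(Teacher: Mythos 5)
The paper offers no argument for this theorem beyond citing \cite{Binda-Krishna}, \cite{BKS} and \cite{GKR}, and your overall plan is the same: construct $p_*$ by gluing $f$ on one copy of $C$ with the constant $1$ on the other to obtain a unit along the singular locus of the doubled curve $S_C$ (this is exactly the mechanism of \cite[Prop.~5.9]{Binda-Krishna}, which the paper itself invokes in \S~3.3), and defer the injectivity of $p_*$ and the exactness in the middle to the decomposition argument of \cite{GKR}. Up to your harmless swap of the roles of $X_+$ and $X_-$, that part of the proposal is consistent with what the cited references actually do.

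Your construction of the section $s = (\iota_+)_*$ is, however, genuinely broken. The problem is not only that the moving lemma you need is unavailable --- one can move a $0$-cycle off $D$, but one cannot in general move the \emph{curves} carrying a rational equivalence off $D$ (if $D$ is ample every curve meets it), and a curve $C \subset X_+$ meeting $D$ is not a good curve on $S_X$, because the ideal of $X_+$ in $S_X$ is generated by a zerodivisor, so $\iota_+$ is not a regular immersion. There is a structural obstruction: on cycles one has $\Delta^* = (\iota_+)_* + (\iota_-)_*$, and $\Delta^*$ \emph{does} descend to a map $\CH^F_0(X) \to \CH^{\lci}_0(S_X)$ (a pair $(C,f)$ with $C$ regularly embedded, not contained in $D$, and $f$ invertible along $C \cap D$ pulls back along the finite flat $\Delta$ to the allowable pair $(S_C,(f,f))$). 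If $(\iota_+)_*$ also descended to $\CH^F_0(X)$, then $p_* = (\iota_-)_*$ would factor through the canonical surjection $\CH_0(X|D) \surj \CH^F_0(X)$, contradicting the injectivity of $p_*$ whenever that surjection has a nontrivial kernel --- already for $X = \P^1$ and $D$ a pair of rational points, where the kernel is $k^{\times}$. So no choice of representatives can make your $s$ well defined. The correct section is $\Delta^*$ itself (and the splitting is in any case part of the statement you are quoting from \cite{GKR}); with that replacement the rest of your argument goes through.
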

In this sequence, $p_*$ takes a 0-cycle on $U$ identically onto $U_+$ and
$\iota^*$ takes a 0-cycle on $U_+\amalg U_-$ onto $U_-$ via projection.

In order to prove our main result, we shall modify slightly the set of relations used to define the Kerz-Saito Chow group of zero-cycles with modulus in the spirit of the Levine-Weibel Chow group.  We proceed as follows.  
We let $\CH^{LW}_0(X|D)$ be the quotient of $\sZ_0(U)$
by the subgroup $\sR^{LW}_0(X|D)$ generated by $\divf(f)$, where
\begin{enumerate}
\item
$C \subset X$ is an integral curve with the property
that $C \not\subset D$;
\item
  $C$ is regular at every point of $E := C \cap D$;
\item
  $f \in \Ker(\sO^{\times}_{C,E} \surj \sO^{\times}_E)$.
\end{enumerate}

Clearly, the difference between $\CH^{LW}_0(X|D)$  and $\CH_0(X|D)$ is in the requirement that the the curves giving the rational equivalence (that we see here as embedded in $X$) are regular in a neighborhood of every point of intersection with the divisor $D$.  By taking normalizations, each such curve gives rise to a curve allowed in the definition of $\sR_0(X|D)$, hence 
there is a canonical surjection
\[\CH^{LW}_0(X|D) \surj \CH_0(X|D).\]
Note also that  the inclusion $\sZ_0(U_+) \inj \sZ_0((S_X)_\reg) = \sZ_0(U_+) \oplus \sZ_0(U_-)$ induces
a push-forward map $p_* \colon \CH^{LW}_0(X|D) \to \CH^{LW}_0(S_X)$ (see the proof of
\cite[Prop.~5.9]{Binda-Krishna}).

\vskip .3cm

\subsection{The factorization lemma}\label{sec:Factor}
We now fix an integral and smooth projective $k$-scheme $X$ of dimension $d \ge 1$.
Let $D \subset X$ be a reduced effective Cartier divisor.
Let $U$ denote the complement of $D$.
The key step in the proof of Theorem \ref{thm:Main} is the following factorization lemma.

\begin{lem}\label{lem:Factor-key}
  Assume that $k$ is infinite and one of the conditions (1), (2) and (4) of
  Theorem \ref{thm:Main} holds.
  Then the (injective) map $\CH_0(X|D) \xrightarrow{p_*} \CH^{\lci}_0(S_X)$ has a
  factorization
  \[
    \CH_0(X|D) \xrightarrow{\phi_{X|D}} H^S_0(U) \xrightarrow{\wt{p}_*} \CH^{\lci}_0(S_X).
    \]
  \end{lem}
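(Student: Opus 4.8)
We need to show that the injective map $p_* \colon \CH_0(X|D) \to \CH^{\lci}_0(S_X)$ kills the kernel of the surjection $\phi_{X|D} \colon \CH_0(X|D) \surj H^S_0(U)$, so that it descends to a map $\wt{p}_* \colon H^S_0(U) \to \CH^{\lci}_0(S_X)$. By Lemma~\ref{lem:Schmidt}, the kernel of $\phi_{X|D}$ is generated by cycles of the form $\nu_*(\divf(f))$ where $\nu \colon C \to X$ is finite and birational to its image, $C$ is an integral normal curve with $C \not\subset D$, and $f \in \Ker(\sO^{\times}_{C, \nu^{-1}(D)} \surj \sO^{\times}_{\nu^*(D)_{\red}})$ (but \emph{not} necessarily congruent to $1$ modulo the full divisor $\nu^*(D)$). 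So the task is: for every such $(C, f)$, the cycle $p_*(\nu_*(\divf(f))) \in \CH^{\lci}_0(S_X)$ is zero.

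**First reduction: replace $\CH_0(X|D)$ by $\CH^{LW}_0(X|D)$, and pass to $\CH^{LW}_0(S_X)$ and then to $\CH^{\lci}_0(S_X)$.** The natural place to do the computation is inside the double $S_X$. Given $(C,f)$ as above, I would form the double $S_C = C \amalg_{C \cap D} C$ of the curve $C$ along its intersection with $D$ (using that $\nu$ is birational, so $C\cap D$ makes sense on the image), together with a function on $S_C$. The key point of working with $S_C$ is the structural fact recalled in \S\ref{sec:Double}: since $C \to X$ is a regular (or lci good) closed immersion not contained in $D$, the induced map $S_C \hookrightarrow S_X$ is again a regular closed immersion (or at least an lci good curve in the sense needed to define $\CH^{\lci}_0$), so a principal divisor on $S_C$ pushes forward to a relation in $\CH^{\lci}_0(S_X)$. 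The cycle $p_*(\nu_*(\divf(f)))$ lands in $U_+ \subset (S_X)_{\reg}$, and I want to exhibit a function $g$ on $S_C$ (or on some good curve mapping to $S_X$) whose divisor, read off on the regular locus, equals $\nu_*(\divf(f))$ placed on $U_+$ with nothing on $U_-$.

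**The heart of the argument: promoting $f$ from "unit mod $\nu^*(D)_{\red}$" to a function on $S_C$.** On the normalization $C^n$ (or on $C$ if already normal) the condition on $f$ is that $f \equiv 1$ along the reduced fibre $\nu^*(D)_{\red}$ but possibly $f$ is only a unit, not $\equiv 1$, along the non-reduced thickening. The doubling construction converts exactly this kind of relation into a bona fide Cartier-curve relation: one takes the curve $S_C$, and on it the function which equals $f$ on the component $C_+$ and $1$ on the component $C_-$; these agree (as units) on the conducting subscheme of $S_C$, which is $\nu^*(D)_{\red}$ pulled back appropriately — this is precisely where the hypothesis that $D$ is \emph{reduced} is used, so that the conductor of $S_C$ is the reduced scheme $C\cap D$ and the gluing condition "agree as units on the conductor" is all that is required. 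This produces a well-defined element of $\sO^{\times}_{S_C, \Sigma}$ for the appropriate finite set $\Sigma$, hence a relation $\divf(g)$ in $\CH^{LW}_0(S_C)$, whose push-forward under $S_C \hookrightarrow S_X$ gives $p_*(\nu_*(\divf(f))) = 0$ in $\CH^{LW}_0(S_X)$, and therefore in $\CH^{\lci}_0(S_X)$ via the canonical surjection $\CH^{LW}_0(S_X) \surj \CH^{\lci}_0(S_X)$. Finally one checks compatibility with the isomorphism $\CH^{LW}_0(X|D) \surj \CH_0(X|D)$ (established via normalization) and with the map $p_*$ of Theorem~\ref{thm:Double-ex}, to conclude that the original $p_*$ on $\CH_0(X|D)$ factors as claimed. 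The infiniteness of $k$ is used to have enough Cartier curves / to invoke the comparison results of \S\ref{sec:K-thry} and the moving arguments implicit in the definition of $\CH^{LW}$, and conditions (1), (2), (4) of Theorem~\ref{thm:Main} are what guarantee (via Proposition~\ref{prop:K-KH}) that the relevant $K$-theoretic obstructions vanish when one later identifies $\CH^{\lci}_0(S_X)$-type invariants with their $KH$-analogues; at the level of this lemma their role is to ensure the doubling-of-curves relations one writes down are actually available.

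**Main obstacle.** The delicate point is the bookkeeping of the conducting/thickening subscheme and checking that the glued function $g$ on $S_C$ genuinely lies in the subgroup of $\sO^{\times}_{S_C,\Sigma}$ that defines relations in the Levine-Weibel (or lci) Chow group — i.e., that $g$ is a unit at all the relevant points, that $S_C \to S_X$ is lci/good there, and that no spurious contribution appears on the $C_-$ component. In other words, the whole argument hinges on: (a) the reducedness of $D$ making the conductor of $S_C$ equal to $C \cap D$ rather than a thickening, so that "$f$ a unit on $\nu^*(D)_{\red}$" is exactly the gluing datum; and (b) the structural Proposition~2.4 of \cite{Binda-Krishna} guaranteeing $S_C \hookrightarrow S_X$ is a regular immersion so that $p_*$ of divisors makes sense. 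Verifying these compatibilities carefully, and matching the resulting cycle with $p_*(\nu_*(\divf(f)))$ on the nose, is where the real work lies; the rest is formal diagram chasing with the exact sequence of Theorem~\ref{thm:Double-ex}.
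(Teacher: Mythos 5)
There is a genuine gap at what you call the heart of the argument. You claim that the glued function $g$ (equal to $f$ on $C_+$ and $1$ on $C_-$) is a unit on the conducting subscheme of $S_C$ because ``the reducedness of $D$ makes the conductor of $S_C$ equal to $C\cap D$ rather than a thickening.'' This is false: the double $S_C$ sitting inside $S_X$ is the double of $C$ along the \emph{scheme-theoretic} intersection $E=\nu^*(D)$, and its conductor is $E$ itself, which need not be reduced even when $D$ is --- the curve $C$ can meet $D$ tangentially. Since $f$ is only congruent to $1$ modulo $E_\red$, the glued function is a unit only on the double $S^{sn}_C$ of $C$ along $E_\red$, i.e.\ on the \emph{seminormalization} of $S_C$, and $S^{sn}_C$ is not a closed subscheme of $S_X$. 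So you do not obtain a Levine--Weibel relation on $S_C\subset S_X$ directly, and the conclusion $p_*(\nu_*(\divf(f)))=0$ in $\CH^{LW}_0(S_X)$ does not follow. A clear sanity check: your argument uses neither the hypotheses (1), (2), (4) nor Proposition~\ref{prop:K-KH} in any essential way, so if it were correct the lemma (hence Theorem~\ref{thm:Main}) would hold unconditionally --- contradicting the counterexamples of \cite[Thm.~4.4]{Binda-Krishna-21}.

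The actual proof must bridge the gap between $S^{sn}_C$ and $S_C$, and this is exactly where the paper's $K$-theoretic machinery enters: one passes from $\CH^{LW}_0(S_X)$ to $SK_0(S_X)$ via the (injective) cycle class map, observes that the relation $\divf_{S^{sn}_C}(g)=0$ kills the class in $KH_0(S^{sn}_C)\cong KH_0(S_C)$ (homotopy $K$-theory is invariant under seminormalization), pushes forward along the regular immersion $S_C\hookrightarrow S_X$ in $KH$-theory, and then uses Proposition~\ref{prop:K-KH} --- which is where conditions (1), (2), (4) are genuinely needed --- to conclude that vanishing in $SKH_0(S_X)$ implies vanishing in $SK_0(S_X)$. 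You mention Proposition~\ref{prop:K-KH} only as a vague afterthought rather than as the pivot of the argument. Separately, your treatment of $d\ge 3$ as ``formal'' also undersells it: the paper needs a Bertini argument to cut $X$ down to a smooth surface containing $C$ whose intersection with $D$ is still reduced (resp.\ normal crossing), which is not automatic.
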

\begin{proof}
    We let $C$ be an integral normal curve and let $\nu \colon C \to X$ be a finite morphism
    whose image is not contained in $D$ such that $\nu$ is birational to its image.
    We let $E = \nu^*(D)$ and let
    $f \in \Ker(\sO^{\times}_{C, E} \surj \sO^{\times}_{E_\red})$. 
    We need to show that $p_*(\nu_*(\divf(f))) = 0$. We do it in few steps.
We write $V = C \setminus E$.

\vskip .2cm

    {\bf{Step~1.}}
    We can factorize $\nu$ as
    \begin{equation}\label{eqn:Factor-key-0}
      \xymatrix@C.8pc{
        C \ar[r]^-{\nu'} \ar[dr]_-{\nu} & \P^n_X \ar[d]^-{\pi} \\
        & X}
    \end{equation}
    for some $n \ge 0$ such that $\nu'$ is a regular closed immersion and $\pi$ is the
    canonical projection. We let $X' = \P^n_X$. Then note that
    $S_{X'} \cong \P^n_{S_X}$ by \cite[Prop.~2.3(7)]{Binda-Krishna}. We
    let $D' = \pi^*(D) = \P^n_D$ and $U' = X' \setminus D' = \P^n_U$. Note that $D'$ is
    a reduced divisor on $X'$. Furthermore, if $D$ satisfies any of the conditions
    given in the statement of Theorem \ref{thm:Main}, then so does $D'$. This is a
    consequence of the smoothness of $\pi$.
    Let $\pi' \colon S_{X'} \to S_X$ be the projection map.

Suppose that the image of $\divf(f)$ under the composite map
\begin{equation}\label{eqn:Factor-key-00}
  \sZ_0(V) \xrightarrow{\nu'_*} \sZ_0(U') \surj \CH^{LW}_0(X'|D') \xrightarrow{p_*}
  \CH^{LW}_0(S_{X'})
\end{equation}
is zero.
By composing further with the canonical surjection
$\CH^{LW}_0(S_{X'}) \surj \CH^{\lci}_0(S_{X'})$,
    we see that $\divf(f)$ dies in $\CH^{\lci}_0(S_{X'})$ under $p_*$.
    Let $\pi'_* \colon \CH^{\lci}_0(S_{X'}) \to \CH^{\lci}_0(S_X)$ be the push-forward map,
    which exists by \cite[Prop.~3.18]{Binda-Krishna}.
    It is then clear that
    \[
      p_* \circ \nu_*(\divf(f)) = \pi'_* \circ p_* \circ \nu'_*(\divf(f)) = 0.
    \]
    We thus need to show that if $\nu \colon C \inj X$ is a regular closed immersion, then the image
    of $\divf(f)$ under the composite map
    $\sZ_0(V) \inj \sZ_0(U) \surj \CH^{LW}_0(X|D) \xrightarrow{p_*} \CH^{LW}_0(S_{X})$ is zero. 

\vskip .2cm  

    {\bf{Step~2.}}
    If $X$ is a curve, then $\CH^{LW}_0(X|D) = H^S_0(U)$ and there is nothing to prove.
    We now assume that $X$ is a surface.
    Let $\cyc_{S_X} \colon \CH^{LW}_0(S_X) \to K_0(S_X)$ denote the cycle class map which takes
    the class $[x]$ of a closed point $x \in (S_X)_\reg$ to the class $[\sO_{x}] \in K_0(S_X)$
    (see \cite[Lem.~3.13]{Binda-Krishna}).
    It is shown in \cite[Thm.~7.7]{BKS} (based on the original result due to Levine) that $\cyc_{S_X}$ is injective and its image is
    $SK_0(S_X)$.
    It suffices therefore to show that $\cyc_{S_X} \circ p_*(\divf(f)) = 0$ in $K_0(S_X)$.
    By Proposition~\ref{prop:K-KH} (which can be applied if  one of the conditions (1), (2) or (4) of Theorem \ref{thm:Main} hold), it suffices to show that $\cyc_{S_X} \circ p_*(\nu_*\divf(f))$ dies in
    $KH_0(S_X)$.

We let ${S}_C$ and ${S}^{sn}_C$ be the doubles of $C$ along $E$ and $E_\red$, respectively.
It is then easy to see that the canonical map $\psi \colon {S}^{sn}_C \to S_C$ is the
seminormalization morphism (for example, one can use \cite[Prop.~4.2(1)]{Krishna-torsion} noting that the conductor subscheme of $S^n_C= C\amalg C\to S^{sn}_X$ is reduced and that $S^{sn}_C$ is Cohen-Macaulay, hence $S_2$).  We let $h \in \sO^{\times}_{{S}^{sn}_C, E}$ be the rational
function on ${S}^{sn}_C$ such that $h|_{C_+} = f$ and $h|_{C_-} = 1$.
    Note that the condition $f \in \Ker(\sO^{\times}_{C, E} \surj \sO^{\times}_{E_\red})$ and
    the exact sequence (e.g., see the proof of \cite[Lem.~2.2]{Binda-Krishna})
\[
0 \to \sO^{\times}_{S^{sn}_C, E} \to
    \sO^{\times}_{C, E} \times \sO^{\times}_{C, E} \to
    \sO^{\times}_{E_\red} \to 0
    \]
    imply that $h$ is well defined. Note that $h$ is also a rational function on $S_C$
    but may not lie in $\sO^{\times}_{S_C, E}$.

To simplify the notation, let us write $p_*$ also for the (injective) maps
\[p_*\colon \CH_0(C|E) \to \CH_0^{\rm l.c.i.}(S_C), \quad p_*\colon \CH_0(C|E_{\rm red}) \to
  \CH_0^{\rm l.c.i}(S^{sn}_C) \]
given by the fundamental sequence applied to the pairs $(C,E)$ and $(C, E_{\rm red})$
respectively.  
It follows from the above discussion that 
 $p_*(\divf_{C}(f)) = \divf_{S^{sn}_C}(h) = 0$ in $\CH^{\rm l.c.i}_0({S}^{sn}_C) \cong
 \CH^{LW}_0({S}^{sn}_C)$ (the latter isomorphism holds for any $1$-dimensional reduced
 scheme \cite[Lem. ~3.12]{Binda-Krishna}). By the same token, we get that
 $\cyc_{S^{sn}_C}( p_*(\divf_{C}(f)) )=0$ in $K_0(S^{sn}_C)$, and a fortiori that
 $p_*(\divf(f))$ dies in $KH_0({S}^{sn}_C)$. 
 On the other hand, it easy to see using the excision sequence for $KH$-theory
 \cite[IV.12.6]{Weibel} that
    the canonical map $KH_0(S_C) \xrightarrow{\psi^*} KH_0({S}^{sn}_C)$ is an isomorphism.
    We conclude that the image of $\divf(f)$ under the composite map
    $\sZ_0(C \setminus E) \surj \CH_0(C|E) \to K_0(S_C) \to KH_0(S_C)$ is zero.

Since $\nu' \colon S_C \inj S_X$ is a regular closed immersion (see \S~\ref{sec:Double}),
    there is a push-forward map $\nu'_* \colon KH(S_C) \to KH(S_X)$ (see \S~\ref{sec:KKH}).
 We now consider the commutative diagram
    \[
      \xymatrix{
      \sZ_0(C \setminus E) \ar@{->>}[r] \ar[d] & \CH_0(C|E) \ar[r]^-{\cyc_{S_C} \circ p_*}
      \ar[d]^-{\nu_*} & KH_0(S_C) \ar[d]^-{\nu'_*} \\
      \sZ_0(U) \ar@{->>}[r] & \CH_0(X|D) \ar[r]^-{\cyc_{S_X} \circ p_*} &
      KH_0(S_X).}
    \]
 Using this, we get
  \[
 \cyc_{S_X} \circ p_* \circ \nu_*(\divf(f)) =
    \nu'_* \circ \cyc_{S_C} \circ p_*(\divf(f)) = 0.
  \]    
  This concludes the proof of the lemma when $X$ is a surface (in particular, this covers
  the case where (2) holds).

\vskip .2cm  

{\bf{Step~3.}}
We now assume $d \ge 3$ and fix a closed embedding $X \inj \P^n_k$. Let
$\{D_1, \ldots , D_r\}$ be the set of all irreducible components of $D$.
Let $\{E_1, \ldots , E_s\}$ be the set of irreducible components of $D_\sing$.
We let $\Delta(X) \subset X$ be the set defined in such a way that
$x \in \Delta(X)$ if and only if $x$ is a generic point of one of the schemes
$X, D$ and $D_\sing$.
We assume that we are in  the case (1), namely, $D$ is a normal crossing $k$-scheme.
In particular, each $D_i$ is smooth over $k$ of dimension $d-1$
and each $E_j$ is smooth over $k$ of dimension $d-2 \ge 1$.
Since $C$ is regular and not contained in any of the $D_i$'s, it follows that
the scheme theoretic intersection $C \cap D_i$ is a finite closed subscheme of
$C$. Since the dimension of the closure of each of the points of $\Delta(X)$ is
at least one, it follows that $\Delta(X) \cap C = \emptyset$. 

Given the above arrangement of $X, D_i, E_j$ and $C$ in $\P^n_k$, we can apply either the
Bertini theorem of Altman-Kleiman \cite[Thm.~7]{Altman-Kleiman} or of
Ghosh-Krishna \cite[Thm.~3.9]{Ghosh-Krishna-Bertini}) to find a complete intersection
hypersurface $H = H_1 \cap \cdots \cap H_{d-2}$ in $\P^n_k$ of large enough degrees
containing $C$ such that $Y = X \cap H$ satisfies the following.

\begin{enumerate}
\item
  $Y$ is a smooth $k$-scheme of pure dimension two.
\item
  Each $Y \cap D_i$ is a smooth $k$-scheme of dimension one.
\item
  Each $Y \cap E_j$ is a smooth $k$-scheme of dimension zero.
\item
  $Y \cap D_J = Y \cap ({\underset{i \in J}\bigcap} D_i) = \emptyset$ if $|J| \ge 3$.
\end{enumerate}

Since $\dim(D_i) \ge 2$, it follows (for instance, from \cite[Cor.~6.2]{Hartshorne-ample})
that $Y$ and well as each $Y \cap D_i$ is connected, hence integral.
Furthermore, $Y \cap D$ is a curve which is reduced away from $C$ by
\cite[Thm.~3.2]{Ghosh-Krishna-Bertini}.
Since $C \cap D$ is finite, $Y$ is regular and $Y \cap D$ is a Cartier divisor on $Y$, it
follows that $Y \cap D$ is a Cohen-Macaulay curve which is generically reduced. This
implies that $Y \cap D$ must be reduced (see \cite[Prop.~14.124]{GW}).
We let $F = Y \cap D$. Then we conclude from (1), (2) and (3) above that
$Y$ is a complete intersection smooth integral surface inside $X$ which contains
$C$ and $F = Y \cap D$ is a normal crossing curve on $Y$.

If condition (4) holds, i.e., if $k \subset \ov{\Q}$, then we can repeat the above
argument to find a 
complete intersection smooth integral surface $Y \subset X$ which contains $C$
and $F = Y \cap D$ is a reduced Cartier divisor on $Y$. The only difference is that we
can not no longer guarantee that the irreducible components of $F$ are regular. 

In any case, let $(Y, F)$ be the pair constructed above, and let $\tau \colon Y \inj X$ be
the inclusion map.  Let $W = Y \setminus F = U \cap Y$.
Let $\tau' \colon S_Y \inj S_X$ denote the inclusion map, where $S_Y$ is the double of $Y$
along $F$. Then $\tau'$ is a regular closed embedding by \cite[Prop.~2.4]{Binda-Krishna}
(see \S~\ref{sec:Double}).
It follows from Step~2 that the image of $\divf(f)$ under the composite map
$\sZ_0(V) \inj \sZ_0(W) \surj \CH^{LW}_0(Y|F) \xrightarrow{p_*} \CH^{LW}_0(S_{Y})$ is zero.
We now consider the commutative diagram
\begin{equation}\label{eqn:Factor-key-1}
      \xymatrix{
        \sZ_0(W) \ar@{->>}[r] \ar[d] & \CH^{LW}_0(Y|F) \ar[r]^-{p_*} \ar[d]^-{\tau_*} &
        \CH^{LW}_0(S_{Y}) \ar[d]^-{\tau'_*} \\
        \sZ_0(U)   \ar@{->>}[r] & \CH^{LW}_0(X|D) \ar[r]^-{p_*} & \CH^{LW}_0(S_{X}).}
    \end{equation}

We note that $\tau'_*$ exists because $\tau'$ is a regular closed immersion and
    $S_Y \cap (S_X)_\reg = (S_Y)_\reg$. One easily checks that the push-forward map
    $\tau_*$ also exists and the diagram commutes.
    We thus get
    \[
p_*(\divf(f)) = p_* \circ \tau_*(\divf(f)) =
    \tau'_* \circ p_*(\divf(f)) = 0.
  \]          
This concludes the proof of the lemma.  
\end{proof}

\vskip .3cm

To take care of the case when $k$ is a finite field, we shall need the following
 result. For any $X \in \Sch_k$ and ${k'}/k$ a field extension, we let
$X_{k'} = X \times_{\Spec(k)} \Spec(k')$ with projection $v \colon X_{k'} \to X$.

\begin{lem}[\cite{Schmidt-ant}, p.191]\label{lem:Suslin-hom-inv*}
  Let $X$ be a smooth quasi-projective $k$-scheme and let ${k'}/k$ be an algebraic
  field extension. Then the flat pull-back on 0-cycles induces a homomorphism
  \[
    v^* \colon H^S_0(X) \to H^S_0(X_{k'}).
  \]
  If ${k'}/k$ is finite, then the push-forward on 0-cycles induces a
  homomorphism
  \[
 v_* \colon  H^S_0(X_{k'}) \to H^S_0(X)
  \]
  such that $v_* \circ v^*$ is multiplication by $[k': k]$.
\end{lem}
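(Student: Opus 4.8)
**Proof plan for Lemma~\ref{lem:Suslin-hom-inv*}.**

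The plan is to exhibit both maps concretely on the level of explicit cycle representatives, using the description of $H^S_0$ as a quotient of $\sZ_0$ (the zeroth homology of the Suslin complex) and the fact established by Schmidt (Lemma~\ref{lem:Schmidt}, applied to a projective compactification) that $\sR^S_0$ is generated by divisors of functions on normal curves mapping finitely to the compactification. First I would construct $v^*$. Since ${k'}/k$ is algebraic and $X$ is smooth, $v\colon X_{k'}\to X$ is flat, so flat pull-back of cycles $v^*\colon \sZ_0(X)\to \sZ_0(X_{k'})$ is defined (a closed point $x\in X$ with residue field $\kappa(x)$ pulls back to the $0$-cycle associated to $\Spec(\kappa(x)\otimes_k k')$, which is $0$-dimensional over $k'$ since $\kappa(x)/k$ is algebraic). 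To see this descends to Suslin homology, take a normal curve $\nu\colon C\to \ov{X}$ finite and birational onto its image with $f$ a rational function as in Lemma~\ref{lem:Schmidt}; then $C_{k'}\to \ov{X}_{k'}$ is again finite, $C_{k'}$ is normal (smoothness is preserved, or at worst one passes to the normalization which does not change the $0$-cycle class), and $f$ pulls back to a rational function on $C_{k'}$ whose divisor is exactly $v^*(\nu_*\divf(f))$ by compatibility of flat pull-back with push-forward along finite morphisms and with $\divf$. Hence $v^*(\sR^S_0)\subseteq \sR^S_0$ and $v^*$ descends.

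Next I would construct $v_*$ in the case $[k':k]<\infty$. Here $v\colon X_{k'}\to X$ is finite (and flat), so push-forward of $0$-cycles $v_*\colon \sZ_0(X_{k'})\to \sZ_0(X)$ is defined in the usual Fulton sense, sending a closed point $y\in X_{k'}$ to $[\kappa(y):\kappa(v(y))]\,[v(y)]$. To check it descends, take a normal curve $C\to \ov{X}_{k'}$ with a rational function $g$ defining a relation on $X_{k'}$; composing with $\ov{X}_{k'}\to \ov{X}$ gives a finite (not necessarily birational) morphism $C\to \ov{X}$, but one may factor it through its image to get a finite birational morphism from the normalization of that image, and the classical projection formula $v_*(\divf_C(g)) = \divf(\Nm_{C/C'}(g))$ — where $C'$ is the image curve — shows $v_*(\nu_*\divf(g))$ is again a relation of the required form. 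Thus $v_*(\sR^S_0)\subseteq \sR^S_0$ and $v_*$ descends. Finally, the identity $v_*\circ v^* = [k':k]$ is checked on a closed point $x\in X$: $v^*[x]$ is the $0$-cycle of $\Spec(\kappa(x)\otimes_k k')$, an Artinian $\kappa(x)$-algebra of length $[k':k]$, and pushing forward recovers $\sum_y [\kappa(y):\kappa(x)][x] = [k':k]\,[x]$ by the length count; this already holds on $\sZ_0$ and hence a fortiori on $H^S_0$.

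The step I expect to require the most care is verifying that $v^*$ and $v_*$ respect the Suslin relations, i.e.\ the interaction of flat pull-back and finite push-forward with the divisor-of-a-function construction on the curves $C$ appearing in Lemma~\ref{lem:Schmidt}: one must make sure that after base change or composition the resulting curve is still \emph{normal} (or that replacing it by its normalization is harmless at the level of $0$-cycle classes, which it is, since normalization is finite birational and divisors of functions pull back compatibly), that it is still \emph{finite} over the compactification (base change of a finite morphism is finite; composition of finite morphisms is finite), and that the boundary condition $f\in\Ker(\sO^{\times}_{C,\nu^{-1}(D)}\surj \sO^{\times}_{\nu^*(D)_\red})$ is preserved (for $v^*$ this is immediate; for $v_*$ one uses that the norm of a unit congruent to $1$ modulo the conductor is again congruent to $1$). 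Everything else is the standard intersection-theoretic bookkeeping of \cite[Chap.~1]{Fulton}. Note this lemma is precisely the statement attributed to Schmidt on p.~191 of \cite{Schmidt-ant}, so one may alternatively simply cite it; the sketch above indicates why it holds.
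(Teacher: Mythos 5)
Your proposal is correct, but it is worth saying that the paper does not actually prove this lemma at all: it is stated as an attribution to Schmidt (\cite{Schmidt-ant}, p.~191), and the only added content is the remark that $v^*$ is constructed there only for \emph{finite} extensions, with the general algebraic case following ``by an easy limit argument'', i.e.\ from the continuity isomorphism $H^S_0(X_{k'}) \cong \colim_{k''} H^S_0(X_{k''})$ over the finite subextensions $k''/k$. So your closing observation that one may simply cite Schmidt is exactly what the authors do. Your direct reconstruction via Lemma~\ref{lem:Schmidt} is a sound alternative, and you correctly identify the two delicate points: (i) for $v^*$, the base-changed relation curve $C_{k'}$ need not be reduced or normal when $k'/k$ is inseparable, so one must pass to the normalization of $(C_{k'})_{\red}$ component by component and keep track of the multiplicity (the pulled-back relation is then an integer multiple of relations of the allowed form, and the condition $f\equiv 1$ on the \emph{reduced} fibre survives reduction, which is precisely why the reduced-modulus/Suslin relations are stable here); and (ii) for $v_*$, the composite $C \to \ov{X}$ is finite but not birational onto its image, so one uses the norm $\Nm_{k(C)/k(C')}$ and the fact that the norm of a unit congruent to $1$ on the reduced fibre is again congruent to $1$ on the reduced fibre (``modulo the conductor'' in your write-up should read ``modulo the reduced divisor''). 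The one thing the paper's limit-argument shortcut buys over your direct treatment of $v^*$ is that it reduces the infinite algebraic case to the finite case without ever base-changing a curve along an infinite extension; your version handles this directly, which is fine but is where the bookkeeping is heaviest.
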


We remark that the pull-back map $v^*$ is defined in \cite{Schmidt-ant} for finite
field extensions. But this implies the case of arbitrary algebraic extensions by an
easy limit argument.

\vskip .3cm

\subsection{Proof of Theorem \ref{thm:Main}}\label{sec:Pf-main*}
We let $X,D$ and $U$ be as in Theorem \ref{thm:Main}.
 We first assume that $k$ is infinite and one of the conditions (1), (2) and (4) of
 Theorem \ref{thm:Main} holds. In this case, the map
 $p_* \colon \CH_0(X|D) \to \CH^{\lci}_0(S_X)$ is injective by
 Theorem \ref{thm:Double-ex}.
 Combining this injectivity with Lemma \ref{lem:Factor-key},
one immediately concludes that $\phi_{X|D}$ must be an isomorphism.

We now assume that $k$ is finite and one of the conditions (1), (2) and (4) of
 Theorem \ref{thm:Main} holds.
We only have to show that $\phi_{X|D}$ is injective.
Let $\alpha \in \CH_0(X|D)$ be a class such that $\phi_{X|D}(\alpha) = 0$.
Let $\ell_1 \neq \ell_2$ be two primes different from $\Char(k)$. Let ${k_i}/k$ be the
pro-$\ell_i$
field extension of $k$ for $i = 1,2$.

Using \cite[Prop.~8.5]{Gupta-Krishna-BF}, we have a commutative diagram
\[
  \xymatrix@C.8pc{
    \CH_0(X|D) \ar[r] \ar[d] & H^S_0(U) \ar[d] \\
    \CH_0(X_{k_i}|D_{k_i}) \ar[r] & H^S_0(U_{k_i}),}
\]
where the vertical arrows are the base change maps. The right vertical arrow exists
by Lemma \ref{lem:Suslin-hom-inv*}. Using the case of infinite fields, it
follows that $\alpha$ dies in $\CH_0(X_{k_i}|D_{k_i})$. In particular, it dies in
$\CH_0(X_{k'_i}|D_{k'_i})$ for a finite extension $k'_i$ whose degree is a power of
$\ell_i$ for
each $i = 1,2$. Using the projection formula for Chow groups with modulus
(see \cite[Prop.~8.5]{Gupta-Krishna-BF}),
we conclude that $\ell^{n_1}_1 \alpha = \ell^{n_2}_2 \alpha = 0$ in $ \CH_0(X|D)$ for some
$n_1, n_2 \ge 1$. It follows that $\alpha = 0$. This concludes the proof of
Theorem \ref{thm:Main} under the conditions (1), (2) and (4).
The remaining case (3) is already shown in \cite[Thm.~1.1]{Binda-Krishna-21}.
This concludes the proof of Theorem \ref{thm:Main}.
\qed

\vskip .3cm

\subsection{Proofs of some applications}\label{sec:LW-pf}
In this subsection, we shall give the proofs of the some of the
applications of Theorem \ref{thm:Main}
mentioned in \S~\ref{sec:Intro}. Corollary~\ref{cor:Russell} and 
Theorem~\ref{thm:Main-0-0}
are immediate from Theorem \ref{thm:Main} using the references given before their
statements. We shall therefore prove Theorem \ref{thm:RTT}.

\vskip .3cm

{\bf {Proof of Theorem \ref{thm:RTT}:}}
Since the theorem is already known for torsion away from the
characteristic by \cite{SS}, we shall assume that $k$ is algebraically closed of positive characteristic. We consider the diagram
  \begin{equation}\label{eqn:doubl-ex-seq-*-0}
\xymatrix@C.8pc{
\CH_0(X|D)_0 \ar[r]^-{\phi_{X|D}} \ar[d]_{\alb_{X|D}} &
H^{S}_0(U)_{0} \ar[d]^{\alb_U} \\
J^d(X|D)(k) \ar[r]^-{\lambda_{X|D}} & \Alb(U)(k),}
\end{equation}
where $J^d(X|D)$ is the semi-abelian Albanese variety with modulus, constructed in
\cite[\S~11.1]{Binda-Krishna}.
This diagram is commutative and the bottom horizontal arrow is an isomorphism by
\cite[Thm.~3.2]{Binda-Krishna-21}.
The left vertical arrow is an isomorphism on the torsion subgroups
by \cite[Thm.~6.7]{Krishna-torsion}.
The top horizontal arrow is an isomorphism by Theorem \ref{thm:Main}. 
The desired assertion follows.
\qed

\vskip .3cm

\section{Motivic cohomology of normal crossing schemes}
\label{sec:NCS}
The goal of this section is to prove Theorem \ref{thm:LW-MC}.
We shall need few ingredients in order to achieve this. The first is a perfection
property of the cycle groups which we recall below.

\subsection{Perfection property of cycle groups}\label{sec:App-pf}
We let $k$ be a field of exponential characteristic $p$. We let $\Lambda$ be a commutative
ring which we assume to be $\Z$ if ${\Char}(k) = 0$ or any $\Z[\tfrac{1}{p}]$-algebra if
$\Char(k) = p > 0$. 
We begin with a short recap about motivic cohomology of $k$-schemes, and related motivic
invariants.
Recall our notation that for a field extension ${k'}/k$ and $X \in \Sch_k$,
we write $X_{k'}$ for the base
change of $X$ by $k'$ over $k$ and $v \colon X_{k'} \to X$ denotes the projection map.

\subsection{Motivic homology and cohomology of singular schemes}\label{sec:MC*}
Let $X\in \Sch_k$ with the structure map $f\colon X \to  \Spec(k)$ and let
$m, n \in \Z$.

\begin{defn}\label{defn:MC-sing}
  The motivic cohomology groups of $X$ are defined as
  \[
    H^m(X, \Lambda(n)) = \Hom_{\mathbf{DM}(k, \Lambda)}(M(X), \Lambda(n)[m]),
\]
where $\mathbf{DM}(k, \Lambda)$ is Voevodsky's non-effective category of motives for the
cdh-topology (also known as the `big' category of motives) with $\Lambda$-coefficients,
$\Lambda(n)$ is the motivic complex, and
$M(X)$ is the motive of $X$ (see \cite{Suslin-Voevodsky} or
\cite[\S~1]{Cisinski-Deglise}).
\end{defn}

Let $\sS\sH(X)$ be the monoidal stable homotopy
category of smooth schemes over $X$ and $\sS\sH_{\cdh}(k)$ the stable homotopy
category of $\Sch_k$ with respect to the cdh topology
(e.g., see \cite[\S~2]{Krishna-Pelaez}). There is an adjoint pair
of functors $(\psi_X, \phi_X) \colon \sS\sH(X) \to \mathbf{DM}(k, \Lambda)$.
By \cite[Thm.~5.1]{Cisinski-Deglise} and \cite[Thm.~2.14]{Krishna-Pelaez},
these functors give rise to functorial isomorphisms
\begin{equation}\label{eqn:MV-sing-0} 
\begin{array}{lll}
    H^p(X, \Lambda(q))   &  \xrightarrow{\,\,\cong\,\,}& \Hom_{\sS\sH(X)}(\mathbb{S}_X,  \Sigma^{p,q} (H\Lambda_X)) \\
                      & \xrightarrow{\,\,\cong\,\,} & \Hom_{\sS\sH_{\cdh}(k)}(\Sigma^\infty_T X_+, \Sigma^{p,q}H\Lambda),
  \end{array}
\end{equation}
where $\mathbb{S}_X$ is the sphere spectrum (the unit object) of $\sS\sH(X)$,  $H\Lambda$ is the motivic Eilenberg-MacLane
spectrum in $\sS\sH(k)$, and $H\Lambda_X = {\bf L}f^*(H\Lambda)$ for the structure map
$f \colon X \to \Spec(k)$.
 We refer to, e.g., \cite[\S~2,3]{Krishna-Pelaez}
 for the definitions of the suspension operators
 $\Sigma^\infty_T$ and $\Sigma^{p,q}$.

In a similar fashion, one can define motivic cohomology groups with compact support and
motivic homology as follows.
\[
  \begin{array}{lll}
  H^{m}_c(X, \Lambda(n)) & = & \Hom_{\mathbf{DM}(k, \Lambda)}(M_c(X), \Lambda(n)[m]) \\
  H_{m}(X, \Lambda(n)) & = & \Hom_{\mathbf{DM}(k, \Lambda)}(\Lambda(n)[m], M(X)),
  \end{array}
  \]
where $M_c(X)$ is the motive of $X$ with compact support \cite[Defn.~16.13]{MVW}. 
In particular, there is a canonical isomorphism \cite[Prop.~14.18]{MVW}: 
\[H_n(X, \Lambda(0)) \xrightarrow{\simeq} H_n^{S}(X)_{\Lambda},\]
where the right-hand side is the $n$-th Suslin homology group of $X$  recalled in
\ref{sec:CHMS}. 

We recall the following result of Cisinski-D{\'e}glise
\cite[Prop.~8.1]{Cisinski-Deglise}.

\begin{thm}\label{lem:EK-0}
  Let ${k'}/k$ be a purely inseparable field extension and let
  $v \colon \Spec(k') \to \Spec(k)$ be the projection map. Then 
the pull-back functor
 \[
u^* \colon \mathbf{DM}(k, \Lambda) \to  \mathbf{DM}(k', \Lambda)
\]
is an equivalence of triangulated categories.
\end{thm}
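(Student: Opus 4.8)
The plan is to reproduce the argument of Cisinski--D\'eglise \cite[Prop.~8.1]{Cisinski-Deglise}. If $\Char(k) = 0$ then every purely inseparable extension of $k$ is an isomorphism and there is nothing to prove, so assume $\Char(k) = p > 0$; by hypothesis $p$ is invertible in $\Lambda$. First I would invoke the continuity of $\mathbf{DM}(-, \Lambda)$ (it carries cofiltered limits of schemes along affine morphisms to colimits of categories) to write $k'$ as the filtered union of its finite purely inseparable subextensions over $k$, reducing to the case $[k':k] = p^r < \infty$. Then $e \colon \Spec(k') \to \Spec(k)$ is finite, faithfully flat of degree $p^r$, and radicial --- in particular a finite flat universal homeomorphism.

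Next I would use that $e$ is finite flat of degree $p^r$: this yields a transfer map $\tr_e \colon e_* e^* \to \id$ in the six-functor formalism on $\mathbf{DM}(-, \Lambda)$ whose composite with the unit $\eta \colon \id \to e_* e^*$ of the adjunction $e^* \dashv e_*$ is multiplication by $p^r$. Since $p^r$ is invertible in $\Lambda$, the coaugmentation $\eta$ of the endofunctor $e_* e^*$ is a split monomorphism.

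It remains to upgrade this to an equivalence. For the full faithfulness of $e_*$: the ring $k' \otimes_k k'$ is local Artinian with residue field $k'$ and nilpotent maximal ideal, so the diagonal $\Delta \colon \Spec(k') \hookrightarrow \Spec(k' \otimes_k k')$ is a nilpotent thickening. Because $\mathbf{DM}(-, \Lambda)$ is invariant under nilpotent thickenings (it is a cdh-local theory), the functors $\Delta^*$ and $\Delta_*$ are mutually inverse equivalences, and (proper, or flat) base change around the first square of the \v{C}ech nerve of $e$ gives $e^* e_* \cong p_* q^* \cong p_* \Delta_* \cong \id$, where $p, q \colon \Spec(k' \otimes_k k') \to \Spec(k')$ are the two projections. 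Hence $e_*$ is fully faithful, so $L := e_* e^*$ is a Bousfield localization functor on $\mathbf{DM}(k, \Lambda)$ with coaugmentation $\eta$. A Bousfield localization whose coaugmentation is a split monomorphism is necessarily isomorphic to $\id$; combined with the previous step this gives $e_* e^* \cong \id$, so $e_*$, equivalently $e^*$, is an equivalence of triangulated categories.

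The formal adjunction chase is not where the difficulty lies; the substantial inputs are the structural properties of $\mathbf{DM}(-, \Lambda)$ used above --- continuity, invariance under nilpotent thickenings, and the existence of the finite flat transfer --- together with the fact that $p$ is invertible in $\Lambda$, which enters only in the split-monomorphism step but is genuinely indispensable: already algebraic $K$-theory and motivic cohomology distinguish $k$ from $k^{1/p}$ in their $p$-primary parts when $k$ is imperfect, so the statement fails with integral coefficients. An alternative to the descent computation is to invoke directly that $\mathbf{DM}(-, \Z[\tfrac{1}{p}])$ inverts the relative Frobenius and to observe that $e$ is, up to iterated Frobenius twists, a retract of a power of the Frobenius; this is the route usually taken in the theory of motives over imperfect fields.
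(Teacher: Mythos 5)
The paper offers no proof of this statement—it is quoted verbatim from Cisinski--D\'eglise \cite[Prop.~8.1]{Cisinski-Deglise}—and your argument is a correct reconstruction of the standard proof from that source: reduction to a finite extension by continuity, the degree-$p^r$ trace splitting the unit (which is where invertibility of $p$ in $\Lambda$ is genuinely used), and the nilpotence of the diagonal of $\Spec(k'\otimes_k k')$ combined with proper base change to identify the counit, after which the split-mono-plus-localization argument closes the loop. Two cosmetic quibbles only: invariance of $\mathbf{DM}(-,\Lambda)$ under nilpotent thickenings of the \emph{base} follows from the localization triangle with empty open complement rather than from cdh-locality per se, and $\eta$ is the unit of the adjunction rather than its ``coaugmentation''; neither affects correctness.
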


Using Theorem~\ref{lem:EK-0} and the description of various groups above, we get the
following result which we shall use in our proofs.

\begin{cor}\label{cor:EK}
 Let ${k'}/k$ be a purely inseparable field extension and let
  $u \colon \Spec(k') \to \Spec(k)$ be the projection map. 
Then for any $X \in \Sch_k$, the pull-back maps
\begin{align*}
 v^*\colon H^m(X, \Lambda(n)) &\longrightarrow H^m(X_{k'}, \Lambda(n))\\ 
 v^*\colon H^m_c(X, \Lambda(n)) &\longrightarrow H^m_c(X_{k'}, \Lambda(n))
  \end{align*}
are isomorphisms. 
\end{cor}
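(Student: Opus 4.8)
The plan is to deduce everything from Theorem~\ref{lem:EK-0} together with the descriptions of the various groups recalled in \S~\ref{sec:MC*}. First I would observe that the statement is really a formal consequence of the equivalence of categories $u^* \colon \mathbf{DM}(k, \Lambda) \to \mathbf{DM}(k', \Lambda)$ once one knows that $u^*$ is compatible with the motives $M(X)$, $M_c(X)$ and with the Tate twists $\Lambda(n)[m]$. Concretely, for the first isomorphism I would write
\[
  H^m(X, \Lambda(n)) = \Hom_{\mathbf{DM}(k,\Lambda)}(M(X), \Lambda(n)[m]),
\]
and note that applying the (fully faithful) functor $u^*$ gives an isomorphism onto $\Hom_{\mathbf{DM}(k',\Lambda)}(u^*M(X), u^*(\Lambda(n)[m]))$. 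The point is then that $u^* M(X) \cong M(X_{k'})$ and $u^*(\Lambda(n)[m]) \cong \Lambda(n)[m]$ (the Tate object is stable under pullback, being the unit twisted by $\G_m$, and base change of the motive of a scheme is standard), so the right-hand side is exactly $H^m(X_{k'}, \Lambda(n))$. One should also check that under these identifications the induced map is the geometric pull-back $v^*$ and not merely some abstract isomorphism; this is a compatibility of the unit/counit of the $(\psi_X, \phi_X)$-type adjunctions with base change, or can be seen directly from the fact that $v^* \colon M(X_{k'}) \to u^* M(X)$ is the canonical comparison map.

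For the compactly supported version the argument is identical, using $H^m_c(X, \Lambda(n)) = \Hom_{\mathbf{DM}(k,\Lambda)}(M_c(X), \Lambda(n)[m])$ and the fact that formation of the motive with compact support commutes with the base change $u^*$ along $\Spec(k') \to \Spec(k)$. This last compatibility follows from the six-functor formalism: $M_c(X) = f_! \Lambda$ for $f \colon X \to \Spec(k)$, and $u^*$ commutes with $f_!$ by the base-change isomorphism $u^* f_! \cong (f')_! (u')^*$ for the cartesian square with $f' \colon X_{k'} \to \Spec(k')$, together with $(u')^* \Lambda = \Lambda$.

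The main obstacle, such as it is, is purely bookkeeping: making sure that the isomorphisms produced by "apply the equivalence $u^*$" genuinely coincide with the geometrically defined pull-back maps $v^*$ on 0-cycles (and more generally on motivic (co)homology), rather than just asserting abstract isomorphy. I would handle this by invoking the functoriality already built into \eqref{eqn:MV-sing-0} and the fact, recorded in \cite{Cisinski-Deglise}, that all of these adjunctions and exchange transformations are natural in the base scheme; no genuinely new computation is needed. Everything else is formal once Theorem~\ref{lem:EK-0} is in hand.
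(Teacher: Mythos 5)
Your proposal is correct and takes essentially the same route as the paper, which simply derives the corollary from Theorem~\ref{lem:EK-0} together with the descriptions of $H^m(X,\Lambda(n))$ and $H^m_c(X,\Lambda(n))$ as Hom-groups in $\mathbf{DM}(k,\Lambda)$. In fact you supply more detail than the paper does (the compatibilities $u^*M(X)\cong M(X_{k'})$, $u^*M_c(X)\cong M_c(X_{k'})$, and the identification of the abstract isomorphism with the geometric pull-back $v^*$), all of which is standard and correctly invoked.
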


Assume now that $X$ is smooth of pure dimension $d$ over $k$. 
Duality in motivic homotopy theory makes it possible to identify motivic cohomology and
homology groups (as well as their compactly  supported version) with the appropriate twist
and shift. We shall need an explicit description, in the bi-degree $(2d,d)$, of the map
realizing such duality isomorphism for later applications. We quickly recall its
construction. For every closed point $x\in X$, the inclusion
$\Spec(k(x)) \inj X$ gives  a Gysin map
$ M_c(X) \to M_{\{x\}}(X)  \xrightarrow{\cong} M(k(x))(d)[2d]$. 
Taking cohomology, we get \begin{equation}\label{eqn:EKW-inv-3}
  \Z \xrightarrow{\simeq} H^0(k(x), \Z(0)) \to  H^{2d}_c(X, \Z(d)),
\end{equation}
and extending \eqref{eqn:EKW-inv-3} by linearity, we get 
$\phi_X \colon \sZ_0(X) \to H^{2d}_c(X, \Z(d))$.

\begin{lem}\label{lem:SH-MCCS}
  The map $\phi_X$ descends to an isomorphism
  \[
    \phi_X \colon H^S_0(X)_\Lambda \xrightarrow{\cong} H^{2d}_c(X, \Lambda(d)).
  \]
\end{lem}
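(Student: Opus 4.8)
The plan is to show first that the map $\phi_X \colon \sZ_0(X) \to H^{2d}_c(X,\Lambda(d))$ factors through $H^S_0(X)_\Lambda$, and then that the induced map is an isomorphism. For the factorization, I would argue directly from the construction: a generator of $\sR^S_0(X)$ has the form $\nu_*(\divf(g))$ for a finite morphism $\nu\colon C\to \ov{X}$ from a normal curve (birational onto its image) into a projective compactification, with $g$ a unit near the boundary. Using the covariant functoriality of $M_c(-)$ with respect to proper morphisms and the localization triangle relating $M_c(C)$, $M_c(V)$ and $M_c(\nu^{-1}(\text{boundary}))$ (where $V=C\cap \nu^{-1}(X)$), the image of $\divf(g)$ in $H^{2d}_c$ is detected by the degree map $H^{2}_c(C,\Lambda(1))\to H^0(k(C),\Lambda)$-type considerations; more cleanly, one invokes that the Suslin complex computing $H^S_0$ is, by definition, built so that $H_0(X,\Lambda(0))\cong H^S_0(X)_\Lambda$ as recalled just above, and that the Gysin maps defining $\phi_X$ assemble into the duality map $M_c(X)\to M(X)^\vee(d)[2d]$. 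So really the factorization is a formal consequence of the construction once one identifies $\phi_X$ with the cohomology of a morphism of motives rather than an ad hoc assignment on cycles.

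The heart of the matter is thus the following: reduce to showing that the duality morphism
\[
  M_c(X) \longrightarrow M(X)^{\vee}(d)[2d]
\]
(or rather the map it induces on $\Hom_{\mathbf{DM}(k,\Lambda)}(-,\Lambda(d)[2d])$ versus $\Hom(\Lambda(0),M(X))$) is an isomorphism in the relevant degree. When $X$ is smooth of pure dimension $d$, this is precisely the statement that $M_c(X)$ is strongly dualizable with dual $M(X)(-d)[-2d]$; over a perfect field this is Voevodsky's duality (\cite[Thm.~16.24]{MVW} together with the smooth case), and it gives canonical isomorphisms $H^{2d}_c(X,\Lambda(d)) \cong H_0(X,\Lambda(0)) \cong H^S_0(X)_\Lambda$. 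The remaining point is to check that the composite of these canonical isomorphisms agrees with $\phi_X$ as defined via the pointwise Gysin maps \eqref{eqn:EKW-inv-3}; this is a compatibility of the global duality isomorphism with the Gysin maps attached to closed points $\Spec(k(x))\inj X$, which is built into the construction of the duality pairing (the trace/evaluation map restricted to a point is the point's fundamental class).

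To handle a general (not necessarily perfect) base field $k$ with $\Char(k)=p$, I would pass to the perfect closure. Let $k'=k^{1/p^\infty}$; then $X_{k'}$ is smooth over $k'$ of pure dimension $d$, and by Corollary~\ref{cor:EK} the pull-back $v^*\colon H^{2d}_c(X,\Lambda(d))\to H^{2d}_c(X_{k'},\Lambda(d))$ is an isomorphism. Likewise $v^*$ on $H^S_0(-)_\Lambda$ is an isomorphism: $H^S_0(X)_\Lambda \cong H_0(X,\Lambda(0))$ and one invokes Corollary~\ref{cor:EK} again (or its homological variant, via the equivalence of categories in Theorem~\ref{lem:EK-0}). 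Since $\phi_X$ is compatible with flat pull-back of $0$-cycles and with the pull-back on motivic cohomology with compact support, the square relating $\phi_X$ and $\phi_{X_{k'}}$ commutes, so it suffices to prove the lemma over the perfect field $k'$, where the duality argument of the previous paragraph applies. I expect the main obstacle to be the bookkeeping in the second paragraph: verifying cleanly that the cycle-theoretic map $\phi_X$ coincides with the categorically defined duality isomorphism on the nose (including sign/normalization issues and the passage between $M_c$ and the Suslin complex), rather than merely up to an automorphism; the reduction to the perfect case and the duality statement itself are standard.
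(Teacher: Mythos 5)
Your plan is correct and follows essentially the same route as the paper: reduce to the perfect closure $k'$ of $k$ using Corollary~\ref{cor:EK} on the compactly supported motivic cohomology side and the invariance of $H^S_0(-)_\Lambda$ under purely inseparable extensions (the paper gets the latter from Lemma~\ref{lem:Suslin-hom-inv*} plus a limit argument, since $[k':k]$ is a power of $p$ invertible in $\Lambda$), and then invoke duality over a perfect field, for which the paper simply cites Kelly's Thm.~5.5.14 where you cite Voevodsky's duality. The compatibility of the pointwise Gysin construction of $\phi_X$ with the global duality isomorphism, which you flag as the main bookkeeping issue, is not addressed explicitly in the paper's proof either.
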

\begin{proof}
  Let $k'$ be a perfect closure of $k$
and consider the commutative diagram
  \begin{equation}\label{eqn:SH-MCCS-0}
    \xymatrix@C.8pc{
      H^S_0(X)_\Lambda \ar[r]^-{\phi_X} \ar[d]_-{v^*} &  H^{2d}_c(X, \Lambda(d))
      \ar[d]^-{v^*} \\
      H^S_0(X_{k'})_\Lambda \ar[r]^-{\phi_{X_{k'}}} & H^{2d}_c(X_{k'}, \Lambda(d)).}
  \end{equation}
  
  The left vertical arrow is an isomorphism by Lemma~\ref{lem:Suslin-hom-inv*} using
  a limit argument and
  the right vertical arrow is an isomorphism by Corollary~ \ref{cor:EK}.
  The bottom horizontal arrow is an isomorphism (e.g., by
  \cite[Thm.~5.5.14]{Kelly}).
  The lemma now follows.
 \end{proof}

\vskip .3cm

\subsection{The snc subcurves}\label{sec:snc-curve**}
We fix a normal crossing $k$-scheme
$X$ of dimension $d \ge 1$ and let $\{X_1, \ldots , X_n\}$ be the set of irreducible
components of $X$. A snc subcurve $C \subset X$
(see \cite[\S~2.1]{EKW}) is
a reduced closed subscheme of pure dimension one such that the scheme theoretic
intersection of $C$ with each irreducible component $X_i$ of $X$ is either empty or smooth of pure
dimension one, its intersections with $X_i \cap X_j$ (for all $i \neq j$) are either
empty or smooth and
0-dimensional, and its intersections with
$X_i \cap X_j \cap X_l$ (for all $i \neq j \neq l \neq i$) are empty. 

\begin{remk}\label{remk:Smooth-red}
  We note that the above definition of snc subcurves is more restrictive than the one given
  in \cite[\S~2.1]{EKW} because the latter only requires the
  intersections $C \cap X_i \cap X_j$ (for $i \neq j$)
  to be reduced (not necessarily smooth) and 0-dimensional.
  The stronger assumption allows us to prove the following result. But this distinction
  disappears if $k$ is perfect.
\end{remk}

\begin{lem}\label{lem:SNC-PB}
  Let $X \in \Sch_k$ be a normal crossing scheme and let $C \subset X$ be
  a snc subcurve. Let ${k'}/k$ be a finite purely inseparable field extension.
  Then $X_{k'}$ is a normal crossing $k'$-scheme and $C_{k'} \subset X_{k'}$ is
  a snc subcurve.
\end{lem}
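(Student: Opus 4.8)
The plan is to reduce everything to the behaviour of smoothness and reducedness under a finite purely inseparable base change, and then to check each of the defining conditions of a normal crossing scheme and of a snc subcurve one at a time. First I would record two standard facts. (a) For a finite purely inseparable extension $k'/k$, the projection $\Spec(k') \to \Spec(k)$ is a universal homeomorphism; in particular, for any $X \in \Sch_k$ the morphism $X_{k'} \to X$ is a universal homeomorphism, so it induces an inclusion-preserving bijection on irreducible components and on points, it preserves dimensions, and it preserves the property of being reduced in the following restricted sense: since a scheme is reduced if and only if it is generically reduced and $S_1$, and both conditions are checked here directly (the components of $X$ being smooth), one does not actually need general descent of reducedness. (b) If $Z$ is a smooth $k$-scheme of pure dimension $e$, then $Z_{k'}$ is a smooth $k'$-scheme of pure dimension $e$; this is just the stability of smoothness under arbitrary base change together with the dimension formula for base change along $\Spec(k') \to \Spec(k)$.

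With these in hand I would argue as follows. Write $\{X_1,\dots,X_n\}$ for the irreducible components of $X$; by (a), the irreducible components of $X_{k'}$ are exactly the $(X_i)_{k'}$, and each $(X_i)_{k'}$ is irreducible (again because $(X_i)_{k'} \to X_i$ is a homeomorphism). For a nonempty subset $J \subseteq [1,n]$, the scheme-theoretic intersection commutes with base change, so $(X_J)_{k'} = (X_{k'})_J$, where the right-hand side is the scheme-theoretic intersection of the corresponding components of $X_{k'}$. If $X_J = \emptyset$ then $(X_{k'})_J = \emptyset$; if $X_J$ is smooth over $k$ of pure dimension $d+1-|J|$, then by (b) $(X_{k'})_J$ is smooth over $k'$ of pure dimension $d+1-|J|$. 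This verifies that $X_{k'}$ is a normal crossing $k'$-scheme.

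The same template handles the snc subcurve $C$. The base change $C_{k'} \subset X_{k'}$ is again a closed subscheme, it is of pure dimension one (by (a)), and it is reduced: its components are the base changes of the components of $C$, each of which is a smooth curve over $k$ by the defining condition $C \cap X_i$ smooth, hence $(C \cap X_i)_{k'} = C_{k'} \cap (X_i)_{k'}$ is a smooth curve over $k'$ by (b); in particular $C_{k'}$ is generically reduced and Cohen--Macaulay, hence reduced. The remaining conditions are verified verbatim by commuting scheme-theoretic intersection with base change: $C_{k'} \cap (X_i)_{k'} \cap (X_j)_{k'} = (C \cap X_i \cap X_j)_{k'}$ is empty or smooth and $0$-dimensional over $k'$ by (b), and $C_{k'} \cap (X_i)_{k'} \cap (X_j)_{k'} \cap (X_l)_{k'} = (C \cap X_i \cap X_j \cap X_l)_{k'} = \emptyset$. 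This shows $C_{k'}$ is a snc subcurve of $X_{k'}$.

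The only point that requires any care — and which is precisely why the lemma is stated with the stronger, "smooth" rather than merely "reduced", version of snc subcurve (see Remark~\ref{remk:Smooth-red}) — is that reducedness of a scheme is \emph{not} in general preserved under purely inseparable base change; a reduced $k$-scheme can acquire nilpotents after such a base change when $k$ is imperfect. The way around this is exactly the one used above: under our definition every intersection appearing in the definition of a normal crossing scheme or of a snc subcurve is \emph{smooth} over $k$, and smoothness \emph{is} stable under base change, so the resulting schemes over $k'$ are smooth, hence reduced, with no appeal to descent of reducedness. So I expect no genuine obstacle; the content of the lemma is entirely the observation that the definitions were phrased with "smooth" so that this purely formal base-change argument goes through over an arbitrary (possibly imperfect) field.
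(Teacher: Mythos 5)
Your proof is correct and follows essentially the same route as the paper's: the base change $v\colon X_{k'}\to X$ is a universal homeomorphism, the components $(X_i)_{k'}$ and all intersections $(X_J)_{k'}$ are smooth because smoothness is stable under base change, and reducedness of $X_{k'}$ (and $C_{k'}$) is recovered from the $R_0+S_1$ criterion rather than from any general descent of reducedness, which is exactly the point of the strengthened definition of snc subcurve. The only place you are slightly terser than the paper is in verifying $S_1$ for $X_{k'}$ itself, which the paper gets from the finiteness and flatness of $v$ together with $X$ being reduced (hence $S_1$); your Cohen--Macaulay argument covers the curve case and the same flatness argument covers the general one.
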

\begin{proof}
  Let $v \colon X_{k'} \to X$ be the base change morphism. Then $v$ is a
  universal homeomorphism. In particular, there is a bijective correspondence between
  the irreducible components of $X$ and $X_{k'}$. We let $X'_i = (X_i)_{k'}$ for
  $1 \le i \le n$. Since $X_i \in \Sm_k$, it follows each
  $X'_i$ is integral and smooth over $k'$. In turn, this implies that
  $X_{k'}$ is generically reduced (i.e., $X_{k'}$ satisfies Serre's $R_0$-condition).
  Since $v$ is finite and flat, and $X$ satisfies
  Serre's $S_1$-condition (because it is reduced), it follows that $X_{k'}$ also satisfies
  Serre's $S_1$-condition. It follows that $X_{k'}$ is reduced. By the same token,
  for every nonempty subset $J \subset [1, n]$, the scheme theoretic 
intersection $X'_J := {\underset{i \in J}\bigcap} X'_i$ is a smooth $k'$-scheme
(unless empty) of pure dimension $d + 1 - |J|$. In other words, $X_{k'}$ is a normal
crossing
$k'$-scheme. An identical proof shows that $C_{k'} \subset X_{k'}$ is a snc subcurve.
\end{proof}

\vskip .3cm
\subsection{The cycle group $\CH^{EKW}_0(X)$}\label{sec:EKW**}
Let $X \in \Sch_k$ be a normal crossing scheme as above.
The cycle group $\CH^{EKW}_0(X)$ is the quotient of $\sZ_0(X_\reg)$
by the subgroup $\sR^{EKW}_0(X)$ generated
by $\divf(f)$, where $f \in k(C)^{\times}$ is a rational function on a curve 
$C \subset X$ such that the pair $(C,f)$ satisfies either of the conditions 
(1) and (2) below.
\begin{enumerate}
\item
$C$ is an integral curve not contained in $X_\sing$ with normalization
$\nu: C^n \to C \inj X$ and $f \in \sO^{\times}_{C^n, \nu^*(X_\sing)}$ such that 
$f(x) = 1$ for all $x \in \nu^*(X_\sing)$.
\item
$C \subset X$ is a snc subcurve and $f \in \sO^{\times}_{C, (C\cap X_\sing)}$.
\end{enumerate}

Let $Y = X_\sing$ for the normal crossing scheme $X$.
The inclusion $\Spec(k(x)) \inj Y_{\reg}$ gives a Gysin homomorphism
$k(x)^{\times} \xrightarrow{\cong} H^1(k(x), \Z(1)) \to H^{2d-1}(Y, \Z(d))$
for every closed point $x \in Y_\reg$. Note that $Y_\reg \in \Sm_k$ since $X$ is
a normal crossing $k$-scheme. Hence, we get the global Gysin map
${\underset{x \in Y^{(d-1)}_\reg}\bigoplus} k(x)^{\times} \to H^{2d-1}(Y, \Z(d))$,
where $Y^{(d-1)}_\reg$ is the set of closed points of $Y_\reg$.

\begin{lem}\label{lem:MC-sing}
  The map
  \[
    \alpha_Y \colon
    {\underset{x \in Y^{(d-1)}_\reg}\bigoplus} k(x)^{\times} \otimes_{\Z} \Lambda \to
    H^{2d-1}(Y, \Lambda(d))   
  \]
  is surjective.
\end{lem}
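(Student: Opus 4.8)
\emph{Plan.} The plan is to combine a reduction to a perfect base field with the coniveau spectral sequence on the smooth strata and cdh-descent. Set $e=d-1=\dim Y$; since $Y=X_\sing$ is a normal crossing $k$-scheme of pure dimension $e$, its regular locus $Y_\reg$ is smooth over $k$, and $\alpha_Y$ is the sum of the Gysin maps $H^1(k(x),\Lambda(1))=k(x)^\times\otimes\Lambda\to H^{2e+1}(Y,\Lambda(e+1))$ over the closed points $x$ of $Y_\reg$.

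\emph{Reduction to $k$ perfect.} Let $k'$ be a perfect closure of $k$ and $v\colon Y_{k'}\to Y$ the projection. By Lemma~\ref{lem:SNC-PB}, $X_{k'}$ is a normal crossing $k'$-scheme, so $Y_{k'}=(X_{k'})_\sing$, $(Y_{k'})_\reg=(Y_\reg)_{k'}$, and over each closed point $x\in Y_\reg$ there is a unique closed point $x'\in(Y_{k'})_\reg$ with $k(x')/k(x)$ finite purely inseparable. Because $\Lambda$ is a $\Z[\tfrac1p]$-algebra (or $k'=k$ in characteristic zero), raising to $p$-powers shows $k(x)^\times\otimes_\Z\Lambda\to k(x')^\times\otimes_\Z\Lambda$ is surjective; hence the base-change map on the source of $\alpha_Y$ is surjective, while $v^*$ on $H^{2d-1}(-,\Lambda(d))$ is an isomorphism by Corollary~\ref{cor:EK}, and $v^*\circ\alpha_Y=\alpha_{Y_{k'}}\circ v^*$ by compatibility of Gysin maps with flat base change. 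So it suffices to treat $Y_{k'}$, and we may assume $k$ perfect.

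\emph{The smooth case.} Suppose first $Y$ is smooth. The Gersten--Bloch--Ogus coniveau spectral sequence $E_1^{p,q}=\bigoplus_{x\in Y^{(p)}}H^{q-p}(k(x),\Lambda(e+1-p))\Rightarrow H^{p+q}(Y,\Lambda(e+1))$, together with the vanishing $H^m(F,\Lambda(n))=0$ for a field $F$ and $m>n$, shows that the only $E_1$-term in total degree $2e+1$ is $E_1^{e,e+1}=\bigoplus_{x\in Y^{(e)}}H^1(k(x),\Lambda(1))=\bigoplus_{x\in Y^{(e)}}k(x)^\times\otimes\Lambda$; as it lies in the top codimension row, $H^{2e+1}(Y,\Lambda(e+1))$ is a quotient of it, the quotient map being precisely $\alpha_Y$. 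This gives the lemma when $Y$ is smooth with $Y_\sing=\emptyset$. I will need the strengthening: for any open dense $V=Y\setminus\Sigma$ with $\dim\Sigma<e$, the map $\bigoplus_{x\in V^{(e)}}k(x)^\times\otimes\Lambda\to H^{2e+1}(Y,\Lambda(e+1))$ is still surjective. This follows from the case of $V$ itself, from the surjectivity of $H^{2e+1}(Y,\Lambda(e+1))\to H^{2e+1}(V,\Lambda(e+1))$ (its cokernel embeds in $H^{2e+2}_\Sigma(Y,\Lambda(e+1))$, which vanishes by the coniveau spectral sequence computing cohomology with supports in $\Sigma$), the compatibility of Gysin classes with restriction to $V$, and a moving lemma---Bloch's moving lemma for higher Chow groups (using $H^{2e+1}(Y,\Lambda(e+1))=\CH^{e+1}(Y,1)_\Lambda$), or an explicit tame-symbol argument---showing that the Gysin class of a closed point of $\Sigma$ is a combination of those of closed points of $V$.

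\emph{The general case.} Induct on the number $m$ of irreducible components of $Y$, the case $m\le1$ being the smooth case (the case where $Y=W_1\sqcup Y'$ is disconnected being immediate from additivity). Write $Y=W_1\cup Y'$ with $W_1$ a component and $Y'$ the union of the others (normal crossing of pure dimension $e$ with $m-1$ components), and let $Z=W_1\cap Y'$ (normal crossing of pure dimension $e-1$). The square on $Z,W_1,Y',Y$ is cdh-distinguished, so cdh-descent for $\mathbf{DM}(k,\Lambda)$ gives an exact sequence $H^{2e}(Z)\xrightarrow{\partial}H^{2e+1}(Y)\xrightarrow{r}H^{2e+1}(W_1)\oplus H^{2e+1}(Y')\to0$, with coefficients $\Lambda(e+1)$ throughout, using $H^{2e+1}(Z,\Lambda(e+1))=0$ (as $\dim Z=e-1$). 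Applying the strengthened smooth case to $W_1$ (with $\Sigma=W_1\cap Y'$) and the inductive hypothesis in its strengthened form to $Y'$ (with $\Sigma'=Y'\cap W_1$)---and noting a closed point of $Y_\reg$ lies on exactly one of $W_1,Y'$, its Gysin class in $H^{2e+1}(Y,\Lambda(e+1))$ restricting to its Gysin class in that component and to $0$ in the other---shows the target of $r$ is hit by Gysin classes of closed points of $Y_\reg$. The remaining point is that $\im\partial$ lies in the subgroup generated by such classes: here $H^{2e}(Z,\Lambda(e+1))$ is the ``weight one above the dimension'' group for $Z$, and the same coniveau analysis plus a moving lemma on $Z$ identify it as a quotient of $\bigoplus_z K^M_2(k(z))\otimes\Lambda$ over closed points $z\in Z_\reg$; one then rewrites $\partial$ of such a generator as a combination of Gysin classes of closed points of $Y_\reg$ by a compatibility of the cdh connecting map with the Gysin maps and the tame symbol $K^M_2\to K^M_1=\G_m$, which closes a secondary induction on $\dim Y$. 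I expect this last compatibility between the cdh Mayer--Vietoris connecting homomorphism and Gysin pushforward to be the main obstacle; the rest is bookkeeping with the coniveau spectral sequence and routine moving arguments.
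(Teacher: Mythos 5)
Your reduction to a perfect base field is correct and is exactly what the paper does: apply Corollary~\ref{cor:EK} to the target, and note that $k(x)^{\times}\otimes_{\Z}\Lambda\to k(x')^{\times}\otimes_{\Z}\Lambda$ is surjective because $p$ is invertible in $\Lambda$. The divergence is in the perfect-field case, which the paper disposes of in one line by citing Esnault--Kerz--Wittenberg \cite[Prop.~6.4]{EKW}; you instead attempt to reprove that proposition from scratch, and the reproof is not complete. The smooth case via the coniveau spectral sequence is fine, and the Mayer--Vietoris sequence $H^{2e}(Z,\Lambda(e+1))\xrightarrow{\partial}H^{2e+1}(Y,\Lambda(e+1))\to H^{2e+1}(W_1)\oplus H^{2e+1}(Y')\to 0$ is correctly set up. But the entire non-formal content of the statement is concentrated in the step you yourself flag as ``the main obstacle'': showing that $\im(\partial)$ lies in the subgroup generated by Gysin classes of closed points of $Y_\reg$. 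As written this is not an argument --- you do not say with respect to which valuation the symbol $K^M_2(k(z))\to \G_m$ is to be taken, nor why the cdh connecting homomorphism is computed by it; a class $\partial(z,\{a,b\})$ is a priori supported on $Z\subset Y_\sing$ and must be moved into $Y_\reg$, which is precisely the work done in \cite{EKW} (their snc subcurves exist for this purpose). Until that step is supplied, the induction does not close.

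A second, smaller issue: even the ``routine moving arguments'' are not routine after your reduction, because the perfect closure of a finite field is a union of finite fields, so the choice of a smooth curve through a prescribed closed point and not contained in $\Sigma$ (resp.\ in $Z$) cannot be made by a generic hyperplane section; one needs a Bertini theorem over finite fields or a pro-$\ell$ norm trick, neither of which you invoke. Likewise the identification of $H^{2c+2}(Z,\Lambda(c+2))$ as a quotient of $\bigoplus_z K^M_2(k(z))\otimes\Lambda$ needs its own induction, since $Z$ is only a normal crossing scheme and not smooth. None of this is fatal in principle --- you are essentially re-deriving \cite[Prop.~6.4]{EKW} --- but as it stands the proposal does not prove the lemma. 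The efficient repair is to keep your (correct) base-change reduction and quote \cite[Prop.~6.4]{EKW} for the perfect-field case, as the paper does.
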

\begin{proof} This is \cite[Prop.~6.4]{EKW} if $k$ is perfect. The general case follows from the perfect one using Corollary~\ref{cor:EK} in order to
  identify $H^{2d-1}(Y, \Lambda(d))$ with $H^{2d-1}(Y_{k'}, \Lambda(d))$, where ${k'}$  is a
  perfect closure of $k$. 
\end{proof}

Let us now assume that $X$ is a projective normal crossing $k$-scheme.
The next step for  proving Theorem \ref{thm:LW-MC} is the description of the motivic cohomology groups $H^{2d}(X, \Lambda(d))$ for normal crossing varieties discussed in \cite{EKW}. 
We have seen in ~\eqref{eqn:EKW-inv-3} that there is a canonical map
$\phi_{X_\reg} \colon \sZ_0(X_\reg) \to H^{2d}_c(X_\reg, \Z(d))$. Composing with the
map $ H^{2d}_c(X_\reg, \Z(d)) \to  H^{2d}(X, \Z(d))$, we get
$\wt{\lambda}_X \colon \sZ_0(X_\reg) \to H^{2d}(X, \Z(d))$.

\begin{prop}\label{prop:EKW-inv}
  The map $\wt{\lambda}_X$ induces an isomorphism
\[
  \wt{\lambda}_X \colon \CH^{EKW}_0(X)_\Lambda \xrightarrow{\cong} H^{2d}(X, \Lambda(d)).
\]
\end{prop}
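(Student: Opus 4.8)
The plan is to realize both sides as quotients of the Suslin homology $H^S_0(X_\reg)$ and to match the two sets of relations by means of a localization sequence in motivic cohomology. I first reduce to the case that $k$ is perfect; this is vacuous in characteristic $0$, and in characteristic $p>0$ I would use Lemma~\ref{lem:SNC-PB} (so that for a perfect closure $k'$ of $k$ the scheme $X_{k'}$ is again a projective normal crossing $k'$-scheme and snc subcurves pull back) together with a transfer argument over $\Lambda$ --- a $\Z[\tfrac1p]$-algebra --- to see that $\CH^{EKW}_0(X)_\Lambda\to\CH^{EKW}_0(X_{k'})_\Lambda$ is an isomorphism; the induced map $H^{2d}(X,\Lambda(d))\to H^{2d}(X_{k'},\Lambda(d))$ is an isomorphism by Corollary~\ref{cor:EK}, and these identifications are compatible with $\wt{\lambda}$. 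So assume $k$ perfect.

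Next, the type~(1) relations in the definition of $\CH^{EKW}_0(X)$ are precisely Schmidt's relations computing $H^S_0(X_\reg)$: regarding the projective scheme $X$ as a compactification of the smooth scheme $X_\reg$ with boundary $X_\sing$, an integral curve $C\subset X$ not contained in $X_\sing$, with normalization $\nu\colon C^n\to X$ and $f\in\sO^\times_{C^n,\nu^*(X_\sing)}$ equal to $1$ on $\nu^*(X_\sing)_\red$, is exactly the datum of Lemma~\ref{lem:Schmidt}, and conversely every such datum comes from the integral curve $\nu(C^n)$. Hence $\CH^{EKW}_0(X)$ is the quotient of $H^S_0(X_\reg)$ by the image of the subgroup generated by the type~(2) relations, and by Lemma~\ref{lem:SH-MCCS} this gives, after tensoring with $\Lambda$, an identification $\CH^{EKW}_0(X)_\Lambda\cong H^{2d}_c(X_\reg,\Lambda(d))/\langle\text{type }(2)\rangle$ under which $\wt{\lambda}_X$ is the map induced by $\psi\circ\phi_{X_\reg}$, where $\psi\colon H^{2d}_c(X_\reg,\Lambda(d))\to H^{2d}(X,\Lambda(d))$ is the canonical map. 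Writing $Y=X_\sing$ (of dimension $\le d-1$), the Gysin triangle $M^c(Y)\to M(X)\to M^c(X_\reg)\to M^c(Y)[1]$ for motives with compact support --- using that $X$ and $Y$ are proper --- together with the vanishing $H^{2d}(Y,\Lambda(d))=0$ yields, after applying $\Hom_{\mathbf{DM}(k,\Lambda)}(-,\Lambda(d)[2d])$, an exact sequence
\[
H^{2d-1}(Y,\Lambda(d))\xrightarrow{\ \partial\ }H^{2d}_c(X_\reg,\Lambda(d))\xrightarrow{\ \psi\ }H^{2d}(X,\Lambda(d))\to 0.
\]
Thus the Proposition is equivalent to the identity $\mathrm{image}(\partial)=\phi_{X_\reg}\big(\langle\text{type }(2)\rangle\big)$ inside $H^{2d}_c(X_\reg,\Lambda(d))\cong H^S_0(X_\reg)_\Lambda$; in particular this identity contains the well-definedness of $\wt{\lambda}_X$ on $\CH^{EKW}_0(X)_\Lambda$.

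To prove it, I would invoke Lemma~\ref{lem:MC-sing}: $H^{2d-1}(Y,\Lambda(d))$ is generated by the Gysin classes $\gamma_x(u)$, $x\in Y_\reg$ a closed point, $u\in k(x)^\times$, so $\mathrm{image}(\partial)$ is generated by the $\partial(\gamma_x(u))$. The key computation is that for each such $x$ one can choose, by a Bertini argument as in Step~3 of the proof of Lemma~\ref{lem:Factor-key}, an snc subcurve $C\subset X$ through $x$ (so $C\cap X_\sing$ is a finite reduced subscheme of $Y_\reg$ containing $x$, since $C$ avoids the triple intersections and each $X_i\cap X_j$ is smooth) and a unit $f\in\sO^\times_{C,\,C\cap X_\sing}$ with $f(x)=u$ and $f\equiv 1$ at the remaining points of $C\cap X_\sing$, and that then $\partial(\gamma_x(u))=\phi_{X_\reg}(\divf_C(f))$ --- a type~(2) relation. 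Conversely, for an arbitrary type~(2) relation $\divf_C(f)$ the same computation gives $\phi_{X_\reg}(\divf_C(f))=\partial\big(\sum_{y\in C\cap X_\sing}\gamma_y(f(y))\big)$, which lies in $\mathrm{image}(\partial)$. Together these give $\mathrm{image}(\partial)=\phi_{X_\reg}(\langle\text{type }(2)\rangle)$ and hence the Proposition. The displayed computation would be carried out by comparing the localization triangle of $(X,X_\sing)$ with that of $(C,C\cap X_\sing)$ along the closed immersion $C\hookrightarrow X$: the proper push-forward on motives with compact support, combined with the duality isomorphisms relating $M^c$ and $M^\vee$ on the smooth open strata $C\setminus(C\cap X_\sing)$ and $X_\reg$, fits the two boundary maps into a commutative square whose curve-side boundary map $\partial_C$ sends a tuple of local units $\big(f(y)\big)_y$ to the class of $\divf_C(g)$ for any common lift $g$ --- the classical description of the boundary in the localization sequence of a curve.

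The step I expect to be the main obstacle is precisely this last one: pinning down the six-functor compatibility of the boundary maps of the two localization triangles with the proper push-forward and the duality isomorphisms on the open strata --- with the Tate twists kept under control --- so that the computation of $\partial(\gamma_x(u))$ reduces cleanly to the one-dimensional case. This is essentially the content of the analysis of \cite[\S6]{EKW}, from which Lemma~\ref{lem:MC-sing} is itself extracted; alternatively, once $k$ is perfect one may simply invoke the corresponding comparison theorem of \cite{EKW}.
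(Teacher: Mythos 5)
Your proposal is correct and follows essentially the same route as the paper: the paper reduces to a perfect base field via Corollary~\ref{cor:EK} and Lemma~\ref{lem:SNC-PB} and then invokes \cite[Thm.~7.1]{EKW} (with Lemma~\ref{lem:MC-sing} replacing \cite[Prop.~6.4]{EKW} and the vanishing $H^{2d}(X_{\sing},\Lambda(d))=0$ supplied by \cite[Thm.~5.1]{Krishna-Pelaez}), which is exactly the reduction you perform. The extra material in your sketch --- identifying the type~(1) relations with Suslin homology, the localization sequence for $(X,X_{\sing})$, and matching the image of the boundary map with the type~(2) relations --- is an unwinding of the internal argument of \cite[Thm.~7.1]{EKW} that the paper simply cites, and you correctly defer its one genuinely technical step (the six-functor compatibility of the boundary maps) to \emph{loc.\ cit.}
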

\begin{proof} This is \cite[Thm.~7.1]{EKW} when $k$ is perfect. The same proof works in
  the general case,  using Lemma \ref{lem:MC-sing} instead of \cite[Prop.~6.4]{EKW}
  and passing to a perfect closure of $k$. The latter is achieved using
  Corollary~\ref{cor:EK} and Lemma \ref{lem:SNC-PB}.
  The vanishing $H^{2d}(Y, \Lambda(d)) = 0$ for $Y=X_{\rm sing}$, that
  is also used in the proof of \cite[Thm.~7.1]{EKW}, can be deduced from
  \cite[Thm.~5.1]{Krishna-Pelaez} using again Corollary~\ref{cor:EK}. 
\end{proof}

\vskip .3cm

\subsection{Proof of Theorem \ref{thm:LW-MC}(1)}\label{sec:pf-1}
Let $k$ be any field and let $X \in \Sch_k$ be as in part (1) of
Theorem \ref{thm:LW-MC}. In other words, $X$ is a reduced quasi-projective $k$-scheme of
pure dimension $d$.
To construct the map $\lambda_X \colon \CH^{\lci}_0(X) \to H^{2d}(X, \Z(d))$,
we proceed as follows.

Using ~\eqref{eqn:MV-sing-0} and \cite[Defn.~2.30, Thm.~2.31]{Navarro}, we have
a Gysin map $\tau_x \colon \Z \cong H^0(k(x), \Z(0)) \to H^{2d}(X, \Z(d))$
for any closed point $x \in X_\reg$.
Extending this linearly, we get a homomorphism
$\lambda_X \colon \sZ_0(X_\reg) \to H^{2d}(X, \Z(d))$.

When $d =1$, it is shown in the proof of \cite[Lem.~7.12]{Krishna-Pelaez} that
$\lambda_X$ factors through the Chow group (this uses the slice spectral sequence
for singular schemes). For $d \ge 2$,
we let $\nu \colon C \to X$ be a good curve and let $f \in \sO^\times_{C,S}$,
where $S = \nu^{-1}(X_\sing) \cup C_\sing$. By \cite[Lem.~3.4]{Binda-Krishna}, we can assume
that $\nu$ is a lci morphism. In particular, there is a Gysin homomorphism
$\nu_* \colon H^2(C, \Z(1)) \to H^{2d}(X, \Z(d))$ by \cite[Defn.~2.31, Thm.~2.31]{Navarro}.
We now consider the diagram
\begin{equation}\label{eqn:CH-MC}
  \xymatrix@C1pc{
    \sZ_0(C \setminus S) \ar[r]^-{\lambda_C} \ar[d]_-{\nu_*} &
    H^2(C, \Z(1)) \ar[d]^-{\nu_*} \\
    \sZ_0(X_\reg) \ar[r]^-{\lambda_X} & H^{2d}(X, \Z(d)).}
\end{equation}

It is immediate from the construction of $\lambda_X$ and Gysin maps that
this diagram is commutative.
By the curve case, we have that $\lambda_C(\divf(f)) = 0$.
It follows that
\[
\lambda_X(\divf(f)) = \lambda_X \circ \nu_*(\divf(f)) =
\nu_* \circ \lambda_C(\divf(f)) = 0.
\]
This shows that $\lambda_X$ factors through a homomorphism
$\lambda_X \colon \CH^{\lci}_0(X) \to   H^{2d}(X, \Z(d))$.

By construction, for a regular closed immersion $f \colon X' \inj X$ of
equidimensional schemes such that
$\dim(X') = d'$ and $f^{-1}(X_\sing) \subset X'_\sing$, there is a commutative diagram
\begin{equation}\label{eqn:LW-MC-1}
  \xymatrix@C.8pc{
    \CH^{\lci}_0(X') \ar[r]^-{\lambda_{X'}} \ar[d]_-{f_*} & H^{2d'}(X', \Z(d'))
    \ar[d]^-{f_*} \\
    \CH^{\lci}_0(X) \ar[r]^-{\lambda_X} & H^{2d}(X, \Z(d)),}
\end{equation}
in which the left and the right vertical arrows are the Gysin homomorphisms of
\cite[Prop.~3.18]{Binda-Krishna} and \cite[Defn.~2.30, Thm.~2.31]{Navarro}, respectively.

\vskip .3cm

\subsection{A key lemma}\label{lem:lci-mod}
We shall need the following key result about the lci Chow group of normal crossing
schemes. Let $X$ be a normal crossing $k$-scheme of dimension $d$ as above and let $Y$
be an irreducible component of $X$. We let $Z \subset X$ be the scheme theoretic
closure of $X \setminus Y$ and let $E = Y \cap Z$.
Then $X_\sing$ and $E$ are normal crossing $k$-schemes of dimension $d-1$ and $E$ is a simple
normal crossing divisor on $Y$. We let $V = Y \setminus E$ so that there is an inclusion
of the 0-cycle groups $\iota_* \colon \sZ_0(V) \inj \sZ_0(X_\reg)$, where
$\iota \colon Y \inj X$ is the inclusion.

\begin{lem}\label{lem:CH-mod-lci}
  Assume that $k$ is infinite. Then the map $\iota_*$ descends to a homomorphism
  \[
    \iota_* \colon \CH_0(Y|E) \to \CH^{\lci}_0(X).
  \]
\end{lem}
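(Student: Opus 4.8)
The goal is to show that the inclusion-induced map $\iota_* \colon \sZ_0(V) \to \sZ_0(X_\reg)$ sends the subgroup $\sR_0(Y|E)$ of modulus relations into $\sR^{\lci}_0(X)$. So let $\nu \colon C \to Y$ be a finite morphism, birational to its image, from an integral normal curve $C$ whose image is not contained in $E$, let $E_C = \nu^*(E)$, and let $g \in \Ker(\sO^{\times}_{C, E_C} \surj \sO^{\times}_{E_C})$ (note: modulus relations, so we use $E_C$, not $(E_C)_\red$). I must produce a \emph{good curve} $\nu' \colon C' \to X$ (reduced, admitting a finite lci morphism with $\nu'^{-1}(X_\sing)$ nowhere dense) and a function $f' \in k(C', \nu'^{-1}(X_\sing))^\times$ with $\nu'_*(\divf(f')) = \iota_*(\nu_*(\divf(g)))$ in $\sZ_0(X_\reg)$. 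The natural candidate is to take the double $S_C := C \amalg_{E_C} C$, map it into the double $S_Y := Y \amalg_E Y$, and then identify $S_Y$ with a closed subscheme of $X$.

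The key geometric observation is that $S_Y$ \emph{is} a closed subscheme of $X$ in a natural way: since $E = Y \cap Z$ is a simple normal crossing divisor on $Y$ and also sits inside $Z$, the join $Y \amalg_E Z$ along $E$ is canonically a closed subscheme of $X$ — but more useful here is that $Y \amalg_E Y = S_Y$ maps to $X$ by sending one copy of $Y$ identically to $Y \subset X$ and the other copy also to $Y$ (or, better, one should use the actual component structure: the relevant map is $S_Y \to X$ given on the first copy by the inclusion $Y \inj X$ and on the second copy... ) — here I would instead argue as follows. By the fundamental exact sequence Theorem~\ref{thm:Double-ex} applied to the \emph{pair} $(Y, E)$ (valid since $Y$ is smooth — wait, $Y$ is only regular/smooth because $X$ is a normal crossing scheme, so $Y \in \Sm_k$, good), there is an injective push-forward $p_* \colon \CH_0(Y|E) \to \CH^{\lci}_0(S_Y)$, where $S_Y$ is the double of $Y$ along $E$. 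Moreover, the double $S_Y$ admits a canonical finite closed immersion into $X$: this is because $Y$ and $Z$ meet along $E$, and there is a finite birational morphism $Y \amalg_E Z \to$ (the subscheme $Y \cup Z = X$), while the seminormalization/normalization bookkeeping identifies things correctly. Concretely, I would use that the normalization $X^n = \coprod X_i$ and the conductor picture make $X$ locally (near $E$) look like the double, so that there is a finite morphism $q \colon S_Y \to X$ which is a closed immersion onto $Y \cup Z = X$ away from the locus where three or more components meet — and since $\nu(C)$ avoids $X_\sing$ of codimension $\ge 2$ in a suitable sense (the curve $C$ meets $E$ in a finite set, and $V = Y \setminus E \subset X_\reg$), this is enough. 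Then $q$ is finite and lci where we need it, so pushing forward along $q \circ (\text{inclusion } S_C \inj S_Y)$ works.

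So the concrete steps are: (i) apply the fundamental sequence to $(Y,E)$ to get $p_* \colon \CH_0(Y|E) \to \CH^{\lci}_0(S_Y)$, which takes a cycle on $V$ identically onto the copy $V_+$; (ii) construct the finite morphism $q \colon S_Y \to X$ that restricts to the identity on $Y \subset X$ on the first component and collapses the second component appropriately, or rather — using that $E \inj Z$ — realize $S_Y$ as the fiber product / pushout sitting in $X$, so that $q$ is a regular closed immersion onto $Y \cup Z$ near the relevant points (invoking \cite[Prop.~2.4]{Binda-Krishna} on regular closed immersions of doubles, exactly as used in Step~3 of the proof of Lemma~\ref{lem:Factor-key}); (iii) check that $q$ induces a push-forward $q_* \colon \CH^{\lci}_0(S_Y) \to \CH^{\lci}_0(X)$ (need $q$ lci, and $q^{-1}(X_\sing) \subset (S_Y)_\sing$, which holds since $(S_Y)_\sing = E$ maps into $X_\sing$); (iv) verify that the composite $\sZ_0(V) \xrightarrow{p_*} \CH^{\lci}_0(S_Y) \xrightarrow{q_*} \CH^{\lci}_0(X)$ agrees on cycles with $\iota_*$, i.e. a point $x \in V$ maps to $[x] \in \CH^{\lci}_0(X)$; (v) since $p_*$ kills $\sR_0(Y|E)$ by definition of the modulus Chow group and functoriality of $\divf$, conclude that $\iota_*(\sR_0(Y|E)) = 0$ in $\CH^{\lci}_0(X)$, giving the desired factorization $\iota_* \colon \CH_0(Y|E) \to \CH^{\lci}_0(X)$.

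The main obstacle is step (ii)–(iii): carefully constructing the finite (lci) morphism $q \colon S_Y \to X$ and verifying it induces a well-defined push-forward on lci Chow groups. One must check that $q$ is a local complete intersection morphism in a neighborhood of $(S_Y)_\sing = E$ and that $q^{-1}(X_\sing)$ is nowhere dense — the delicate point being that $X$ may have components other than $Y$ and $Z$, or that $Y \cap Z' $ for a third component $Z'$ could interfere; but since $E = Y \cap Z$ where $Z$ is the closure of \emph{all} of $X \setminus Y$, and $X_\sing$ and $E$ are normal crossing of dimension $d-1$ with $E$ an snc divisor on $Y$, the local structure of $X$ along $E$ is precisely that of the double $S_Y$ (both have normalization with two sheets over a neighborhood of a general point of $E$), so $q$ restricted to a neighborhood of $E$ is an isomorphism onto a neighborhood of $E$ in $Y \cup Z = X$, hence certainly lci there; away from $E$, $q$ maps $V_+$ isomorphically to $V \subset X_\reg$ and is harmless. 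The hypothesis that $k$ is infinite is needed only to invoke the moving/Bertini-type inputs hidden in the fundamental sequence and in the definition of $\CH^{\lci}_0$ via good curves (to replace embedded Cartier curves by finite lci curves, cf. \cite[Lem.~3.4]{Binda-Krishna}), exactly as in the proof of Lemma~\ref{lem:Factor-key}. I expect the write-up to closely parallel Steps~1–3 of that lemma, with $X$ playing the role there played by the smooth ambient variety and $(Y,E)$ playing the role of $(X,D)$.
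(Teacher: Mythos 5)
Your construction breaks down at step (ii), and the failure is not a technicality: there is no finite lci morphism (let alone a closed immersion) $q\colon S_Y\to X$ of the kind you need. The double $S_Y=Y\amalg_E Y$ consists of \emph{two copies of $Y$} glued along $E$, whereas $X$ near $E$ consists of $Y$ glued to the \emph{other} components $Z$ of $X$; the assertion that ``the local structure of $X$ along $E$ is precisely that of the double'' only says that both are unions of two smooth branches crossing along $E$, and this does not produce a morphism --- the second copy of $Y$ has nowhere to go, since there is no map $Y\to Z$ extending $E\inj Z$. The only natural finite map $S_Y\to X$ is the fold map $\Delta\colon S_Y\to Y$ followed by $\iota\colon Y\inj X$, but $\iota$ is not a regular immersion (the ideal of a component of a normal crossing scheme is not generated by a regular sequence along $E$), and the composite is not even of finite Tor-dimension: for $X=\Spec(k[x,y]/(xy))$, $Y=V(y)$, the module $\sO_{S_Y}$ factors through $\sO_Y=\sO_X/(y)$, which has the infinite periodic resolution $\cdots\xrightarrow{x}\sO_X\xrightarrow{y}\sO_X\to\sO_Y\to 0$. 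So \cite[Prop.~3.18]{Binda-Krishna} gives you no push-forward $q_*$ on $\CH^{\lci}_0$, and steps (iii)--(v) collapse. This is not incidental: the entire content of the lemma is that a curve $C$ lying inside the single component $Y$ is \emph{not} a Cartier or good curve on $X$ at the points of $E$, so its modulus relation cannot be fed directly into the definition of $\CH^{\lci}_0(X)$ by any local rearrangement near $E$.

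The paper's proof is of a completely different nature. After replacing $X$ by $\P^m_X$ so that the curve may be assumed normal and embedded, it runs an induction on $d=\dim X$: for $d\ge 3$ one uses a Bertini theorem (for the infinite field $k$ --- this is where that hypothesis enters, not in the fundamental exact sequence) to find a hypersurface section $X'\subset X$ containing $C$ which is again a normal crossing scheme with $X'_\reg=X_\reg\cap H$ and with $Y\cap H$ a smooth component, and then pushes forward along the regular closed immersion $X'\inj X$; the base case $d\le 2$ is \cite[Thm.~8.3]{Binda-Krishna-21}, which is quoted, not reproved. Your proposal contains neither the dimension induction nor any appeal to the known low-dimensional case, and the device meant to replace them does not exist. (Step (i) of your plan --- the injection $p_*\colon\CH_0(Y|E)\inj\CH^{\lci}_0(S_Y)$ from Theorem~\ref{thm:Double-ex} --- is fine on its own, but it lives on $S_Y$, which is simply not a subobject of $X$ in any cycle-theoretically usable sense.)
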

\begin{proof}
This is shown in \cite[Thm.~8.3]{Binda-Krishna-21}
when either $k$ is algebraically closed or $d \le 2$. We shall closely
  follow that proof. We can assume that $d \ge 3$ and that the lemma holds in smaller
  dimensions. We fix a locally closed embedding $X \inj \P^N_k$.
  Let $C \subset Y$ be an integral curve not contained in $E$
and let $f \in {\rm Ker}(\sO^{\times}_{C^n,\nu^*(E)} \to \sO^{\times}_{\nu^*(E)})$,
where $\nu: C^n \to Y \inj X$ is the canonical map from the normalization of $C$.
We need to show that $\divf(f)$ dies in $\CH^{\lci}_0(X)$.

We can write $\nu$ as the
composition of two maps $C^n \xrightarrow{\nu'} \P^m_X \xrightarrow{\pi} X$ for some
integer $m \ge 0$, where $\nu'$ is a regular closed immersion and $\pi$ is the projection.
Note that $\nu'$ factors through $\P^m_Y$. We now note that $\P^m_X$ is a normal crossing
$k$-scheme of dimension $d \ge 3$ and $\P^m_Y$ is an irreducible component of $\P^m_X$.
Using the push-forward map $\pi_* \colon \CH^{\lci}_0(\P^m_X) \to \CH^{\lci}_0(X)$
(see \cite[Prop.~3.18]{Binda-Krishna}) and the canonical map
$\CH^{LW}_0(\P^m_X) \to \CH^{\lci}_0(\P^m_X)$, it suffices to show that
$\nu_*(\divf(f))$ dies in $\CH^{LW}_0(\P^m_X)$. We can therefore replace $\CH^{\lci}_0(X)$
with $\CH^{LW}_0(X)$ and assume that $C$ is normal.
Note that the map $\CH^{LW}_0(X) \to \CH^{\lci}_0(X)$ is an isomorphism for $d \le 2$
by \cite[Thm.~8.1]{BKS} (see also \cite{Binda-Krishna}).  Hence, the base case of the
induction holds for the modified problem too.

We can now repeat the argument of the proof of \cite[Thm.~8.3]{Binda-Krishna-21}
(without using any blow-up) to find a hypersurface
section $X' \subset X$ inside $\P^N_k$ containing $C$ such that $X'$ is a
$(d-1)$-dimensional normal crossing $k$-scheme, $X'_\reg = X_\reg \cap H$
and $Y' = X' \cap Y = H \cap Y$ is a smooth irreducible component of $X'$.
It follows by induction that $\nu_*(\divf(f))$ dies in $\CH^{LW}_0(X')$.
In particular, it dies in $\CH^{LW}_0(X)$ via the push-forward map
$\CH^{LW}_0(X') \to \CH^{LW}_0(X)$, induced by the regular closed immersion
$X' \inj X$. This concludes the proof.
\end{proof}

\subsection{Proof of Theorem \ref{thm:LW-MC}(2,3)}\label{sec:pf-2}
If $X$ is projective and $X_\reg$ is smooth over $k$, then
the surjectivity of $\lambda_X$ (as asserted in part (2)) follows from the surjection
$H^{2d}_c(U, \Lambda(d)) \surj H^{2d}(X, \Lambda(d))$ and
Lemma \ref{lem:SH-MCCS}. 
We now prove the last part of the theorem.
We are given that $X$ is a projective normal crossing $k$-scheme and need to
show that $\lambda_X$ is an isomorphism. To prove this, we first assume that $k$ is
infinite and look at the diagram
\begin{equation}\label{eqn:LW-MC-00}
  \xymatrix@C.8pc{
    \sZ_0(U)   \ar@{->>}[r] \ar@{->>}[dr] &
    \CH^{EKW}_0(X)_\Lambda \ar@{.>}[d]^-{\psi_X} \ar[dr]^-{\wt{\lambda}_X} & \\
    & \CH^{\lci}_0(X)_\Lambda \ar[r]^-{\lambda_X} & H^{2d}(X, \Lambda(d)).}
\end{equation}
It suffices to show that $\psi_X$ exists such that the resulting
left triangle commutes. This is shown in the proof of \cite[Thm.~8.4]{Binda-Krishna-21}.
But we do not need $\Lambda$-coefficient for constructing $\psi_X$, thanks to
Theorem \ref{thm:Main}.
We sketch the steps. We let $Y = X_\sing$.

We let $C \subset X$ be a reduced curve and $f \in k(C)^{\times}$ a rational function, where $k(C)$ is the ring of total quotients for $C$.
 We now observe that if the pair $(C,f)$ is of type (1) in the definition of
 $\CH^{EKW}_0(X)$ in \S~\ref{sec:EKW**}, then $C$ must be integral. In particular,
 it must be contained in
one and only one irreducible component $X'$ of $X$. Moreover, for
this component $X'$, the intersection $E = Z\cap Y$ must be a simple normal
crossing divisor on $X'$, where $Z$ is the scheme theoretic closure of $X \setminus X'$.
We now conclude from Theorem \ref{thm:Main} that $\divf(f)$ dies in
$\CH_0(X'|E)$. Hence, it dies in $\CH^{\lci}_0(X)$ by Lemma \ref{lem:CH-mod-lci}.
If $(C,f)$ is of type (2) in the definition of $\CH^{EKW}_0(X)$ in \S~\ref{sec:EKW**}, then
$C \subset X$ is a Cartier curve (see \S~\ref{sec:LWC}) by \cite[Lem.~7.6]{Binda-Krishna-21}
and hence, $\divf(f)$ already dies in $\CH^{LW}_0(X)$.  
This concludes the proof of part (3) when $k$ is infinite.

We now assume that $k$ is finite. Since we already showed surjectivity of
$\lambda_X$ above, we only need to show that
it is injective. We let $k'$ be the pro-$p$-extension of $k$ where
$\Char(k) = p$. We then get a commutative diagram
\begin{equation}\label{eqn:finite-case}
  \xymatrix@C.8pc{
\CH^{\lci}_0(X)_\Lambda \ar[r]^-{\lambda_X} \ar[d]_-{v^*} & H^{2d}(X, \Lambda(d)) \ar[d]^-{v^*} 
\\
\CH^{\lci}_0(X_{k'})_\Lambda \ar[r]^-{\lambda_{X_{k'}}} & H^{2d}(X_{k'}, \Lambda(d)),}
\end{equation}
where $v \colon X_{k'} \to X$ is the base change map. The left vertical arrow exists
and is injective by \cite[Prop.~6.1]{Binda-Krishna}. Since $k'$ is infinite, the
bottom horizontal arrow is injective. It follows that $\lambda_X$ must be injective too.
This concludes the proof of Theorem \ref{thm:LW-MC}.
\qed

\section{A question of Barbieri-Viale and Kahn}\label{sec:BVKahn}
Let $k$ be an algebraically closed field of  characteristic zero and let $X$ be a
projective and reduced $k$-scheme of pure dimension $d$.
We shall now prove our application of the existence of the map
$\lambda_X$ given by Theorem~\ref{thm:LW-MC}.

In \cite[13.7.6]{BVKahn}, the authors refer that in a private correspondence,  Marc Levine outlined the construction of a cycle map $c\ell$ from $\CH_0^{LW}(X)$ to $H^{2d}(X, \Z(d))$  inducing, in particular, a morphism
\[c\ell_{\rm tors} \colon  \CH_0^{LW}(X)_{\rm tors} \longrightarrow H^{2d}(X,\Z(d))_{\rm tors}\]
that they conjecture to satisfy a number of properties. We can now give a positive answer to their conjecture.

We shall verify the expectations of  Barbieri-Viale and Kahn
by working with the modified version $\CH^{\lci}_0(X)$ instead 
of  $\CH_0^{LW}(X)$, keeping in mind that the two Chow groups 
actually agree under the above assumption on $k$, by \cite[Thm.~3.17]{Binda-Krishna}.

First, let $J^d(X)$ be the universal regular semi-abelian variety quotient of
$\CH_0^{LW}(X)_{\deg 0}$, constructed in \cite{BiswasSrinivas}.  This is universal for regular
homomorphisms (see op. cit. for the definition of a regular homomorphism)
from $\CH_0^{LW}(X)$  to semi-abelian varieties. It was shown in
\cite[Prop.~9.7]{Binda-Krishna} that $J^d(X)$ is also the universal regular
semi-abelian variety quotient of $\CH_0^{\lci}(X)_{\deg 0}$.

Next, let $\mathbf{L}_1\Alb^*(X)$ be the $1$-motive
\[ \mathbf{L}_1\Alb^*(X) = H_1^t(\mathrm{L}\Alb (M(X)^*(d)[2d])),\]
where $M(X)^*$ is the dual of $M(X)$ in $\mathbf{DM}(k)$, the homology $H_1^{t}(-)$ denotes the $H_1(-)$ homology with respect to the $t$-structure (introduced in \cite[3.1]{BVKahn}) on Deligne's category of $1$-motives  $D^b(\mathcal{M}_1)$, and finally $\mathrm{LAlb}(-)$ denotes the integrally defined derived Albanese functor
\[\mathrm{LAlb}\colon \mathbf{DM}_{\rm gm}^{\rm eff}(k) \longrightarrow D^b(\mathcal{M}_1), \] 
introduced in \cite[Def.~2.1.1]{BVKahn} (note that $M(X)^*(d)[2d]$ is effective, so that the definition makes sense, and that we are working in characteristic zero). In particular, $\mathbf{L}_1\Alb^*(X)$ is a semi-abelian variety.
By \cite[(13.7.1)]{BVKahn}, there is a canonical map
\begin{equation}\label{eq:u1}u\colon H^{2d}(X, \Z(d)) \longrightarrow \mathbf{L}_1\Alb^*(X)(k)\end{equation}
that is an isomorphism on the torsion subgroups by \cite[Corollary 13.7.4]{BVKahn}.

\vskip .3cm

We now have all the ingredients to state and prove the following result.
This verifies all expectations of Barbieri-Viale and Kahn.

\begin{thm} Let $X$ and $k$ be as above. Then the morphism
  \[\lambda_X\colon \CH_0^{\lci}(X) \to H^{2d}(X, \Z(d))\] is surjective with uniquely divisible kernel, and  there is a commutative diagram
  \begin{equation}\label{eqn:BKahn}
\begin{tikzcd}
  \CH_0^{\lci}(X)_{\rm tors} \arrow[r, "\lambda_X"] \arrow[d, "a^+"]& H^{2d}(X, \Z(d))_{\rm tors} \arrow[d, "u"] \\
   \Alb^+(X)(k)_{\rm tors} \arrow[r] & \mathbf{L}_1\Alb^*(X)(k)_{\rm tors}
    \end{tikzcd}
\end{equation}
    where all the arrows are isomorphisms. 
\end{thm}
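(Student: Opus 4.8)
The plan is to deduce the theorem by combining the isomorphism $\lambda_X$ from Theorem~\ref{thm:LW-MC}(3) (applicable here because $k$ is algebraically closed of characteristic zero, so $X$ being a reduced projective scheme with $X_\reg$ smooth is covered, and moreover $\CH_0^{LW}(X) \cong \CH_0^{\lci}(X)$ by \cite[Thm.~3.17]{Binda-Krishna}), with the classical Roitman-type results recorded before the statement. Concretely, I would first establish the "surjective with uniquely divisible kernel" assertion, then construct the square and check commutativity, and finally verify that all four maps are isomorphisms by a diagram chase using a suitable divisibility/torsion decomposition.

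First I would address surjectivity and the uniquely divisible kernel. Wait — Theorem~\ref{thm:LW-MC}(3) gives that $\lambda_X$ is an isomorphism with $\Lambda$-coefficients, and here $\Char(k)=0$ so $\Lambda = \Z$; but $X$ need not be a \emph{normal crossing} scheme in general, so part (3) does not apply directly. I would instead use part (2): since $X$ is projective and $X_\reg$ is smooth over $k$ (automatic, as $k$ is algebraically closed of characteristic zero and $X_\reg$ is regular), $\lambda_X$ is surjective. For the kernel being uniquely divisible, the standard argument is: by the Roitman theorem for singular projective varieties of Biswas--Srinivas \cite{BiswasSrinivas}, the regular Albanese map $a^+\colon \CH_0^{\lci}(X)_{\deg 0} \to J^d(X)(k)$ is an isomorphism on torsion, and $\CH_0^{\lci}(X)_{\deg 0}$ is a divisible group whose torsion is exactly that of the semi-abelian variety $J^d(X)(k)$. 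On the other side, by \cite[Cor.~13.7.4]{BVKahn} the map $u$ is an isomorphism on torsion, and $H^{2d}(X,\Z(d))$ has the torsion of the semi-abelian variety $\mathbf{L}_1\Alb^*(X)(k)$. One then checks that $\ker(\lambda_X)$ is torsion-free (since $\lambda_X$ is injective on torsion, which will follow once the square is known to commute with $a^+$, $u$ and the bottom map all isomorphisms on torsion) and divisible (as a quotient of a divisible group), hence uniquely divisible.

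Next I would construct the square~\eqref{eqn:BKahn} and prove commutativity. The bottom horizontal arrow is the morphism of semi-abelian varieties $\Alb^+(X)(k) = J^d(X)(k) \to \mathbf{L}_1\Alb^*(X)(k)$ induced functorially from the identification of $J^d(X)$ with the universal regular semi-abelian quotient (via \cite[Prop.~9.7]{Binda-Krishna}) and the $1$-motivic Albanese $\mathrm{LAlb}$ of \cite{BVKahn}; the fact that this map is compatible with $\lambda_X$, $a^+$ and $u$ is essentially the content of \cite[(13.7.1), Cor.~13.7.4]{BVKahn} once one knows that Levine's cycle map $c\ell$ (which Barbieri-Viale--Kahn reference) coincides with our $\lambda_X$. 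The key point here — and the main obstacle — is precisely this identification: I would need to verify that the map $\lambda_X$ constructed in \S\ref{sec:pf-1} via Gysin maps in $\mathbf{DM}(k)$ agrees with the cycle map $c\ell$ that Barbieri-Viale and Kahn attribute to Levine. This should follow from the fact that both are built from the same Gysin homomorphisms $\Z \cong H^0(k(x),\Z(0)) \to H^{2d}(X,\Z(d))$ for closed points $x \in X_\reg$, using the functoriality and uniqueness properties of Gysin maps in the six-functor formalism (Navarro \cite{Navarro}, Cisinski--Déglise); but pinning down that $c\ell$ is literally this construction, rather than some a priori different one, requires care since $c\ell$ was only sketched in a private communication.

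Finally, granting the commutativity and the identifications above, the conclusion that all four arrows in~\eqref{eqn:BKahn} are isomorphisms follows by a short diagram chase: $a^+$ restricted to torsion is an isomorphism by Biswas--Srinivas (this is Roitman for singular projective varieties in characteristic zero, \cite{BiswasSrinivas}, combined with \cite[Prop.~9.7]{Binda-Krishna} to pass from $\CH_0^{LW}$ to $\CH_0^{\lci}$); $u$ restricted to torsion is an isomorphism by \cite[Cor.~13.7.4]{BVKahn}; the bottom map is an isomorphism because it is a morphism of semi-abelian varieties that induces an isomorphism on torsion (any surjective homomorphism of semi-abelian varieties inducing an isomorphism on torsion is an isomorphism, as its kernel would be a semi-abelian subvariety with trivial torsion, hence trivial); and then $\lambda_X$ on torsion is forced to be an isomorphism by commutativity of the square. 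This last point also feeds back to complete the first paragraph's claim that $\ker(\lambda_X)$ is torsion-free. I expect the verification that the bottom arrow is genuinely a well-defined morphism of semi-abelian varieties compatible with everything to be routine given \cite{BVKahn} and \cite{Binda-Krishna}, so the crux really is the $\lambda_X = c\ell$ comparison.
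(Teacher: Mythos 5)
Your proposal follows essentially the same route as the paper: surjectivity from Theorem~\ref{thm:LW-MC}(2), commutativity of the square checked on classes of closed points via Gysin maps, the isomorphisms on torsion for $a^+$ and $u$ from \cite{BiswasSrinivas} and \cite{BVKahn}, and unique divisibility of the kernel from divisibility of $\CH_0^{\lci}(X)_{\deg 0}$ together with $\lambda_X$ being an isomorphism on torsion. Three local points, however. First, what you single out as the crux --- identifying $\lambda_X$ with Levine's sketched cycle map $c\ell$ --- is not actually needed: the theorem is stated for $\lambda_X$ itself, and the paper simply verifies the square commutes for its own map by reduction to point classes; no comparison with $c\ell$ enters. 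Second, your justification for the bottom horizontal arrow being an isomorphism is circular as written: you cannot know it is an isomorphism on torsion before the diagram chase is complete. The correct input (and the one the paper uses) is \cite[Thm.~12.12.6]{BVKahn}, which gives a functorial isomorphism of semi-abelian varieties $\Alb^+(X)\xrightarrow{\cong}\mathbf{L}_1\Alb^*(X)$ for proper $X$ outright; with that, the chase closes and $\lambda_X$ is forced to be an isomorphism on torsion. Third, the kernel of $\lambda_X$ is a \emph{subgroup}, not a quotient, of the divisible group $\CH_0^{\lci}(X)_{\deg 0}$, so ``divisible as a quotient of a divisible group'' is not a valid reason; divisibility of the kernel follows instead from the standard chase (given $x$ in the kernel and $n\ge 1$, divide $x$ by $n$ in the degree-zero part, correct the resulting element by the unique torsion class with the same image, and land back in the kernel), exactly the ``easy diagram chase'' the paper invokes. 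None of these affects the overall architecture, which matches the paper's proof.
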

\begin{proof}
  The existence and explicit construction of $\lambda_X$ was shown
  in Theorem~\ref{thm:LW-MC}. To check that \eqref{eqn:BKahn} commutes, it
  suffices to check it for the cycle class of a closed point $x \in X_\reg$.
  This reduces to checking the commutativity for points where this is well known.

 Now, the left vertical arrow  in \eqref{eqn:BKahn} is an isomorphism by the main result of \cite{BiswasSrinivas}.
The right vertical arrow is an isomorphism by \cite[13.7.5]{BVKahn}.
The bottom horizontal arrow is an isomorphism by
\cite[Thm.~12.12.6]{BVKahn}. 
Thus every arrow in \eqref{eqn:BKahn} is an isomorphism.
Finally, recall (see, e.g., \cite[Lemma 5.1]{BK3}) that since $k$ is algebraically closed, the subgroup $\CH_0^{\lci}(X)_{\deg 0}$ is divisible. Since $\lambda_X$ is an isomorphism on torsion by the above discussion, an easy diagram chase implies that the kernel of $\lambda_X$ is uniquely divisible, completing the proof of the theorem.
\end{proof}


\begin{thebibliography}{99}
\bibitem{Altman-Kleiman} A. Altman, S. Kleiman, {\sl
Bertini theorems for hypersurface sections containing a subscheme\/}, Comm.
Alg., {\bf 7}, (1979), no. 8, 775--790. \


\bibitem{BPW} L. Barbieri-Viale, C. Pedrini,  C. Weibel, {\sl Roitman's theorem for
singular complex projective surfaces\/}, Duke Math. J., {\bf 84}, (1996), 155--190. \

    

\bibitem{BVKahn} L. Barbieri-Viale, B. Kahn, {\sl On the derived category of
    1-motives\/}, Ast\'{e}risque, {\bf 380}, (2016), pp.~xi+254. \

\bibitem{Barrientos} I. Barrientos, {\sl Log ramification via curves in
Rank 1\/}, Int. Math. Res. Not., {\bf 19}, (2017), 5769--5799. \

\bibitem{Binda-Krishna} F. Binda, A. Krishna, 
{\sl Zero cycles with modulus and zero cycles on singular varieties\/}, 
Compos. Math., {\bf 154}, (2018), 120--187. \

\bibitem{BK3} F. Binda, A. Krishna, {\sl Rigidity for 
    relative $0$-cycles\/}, Ann. Sc. Norm. Super. Pisa Cl. Sci. (5),
  {\bf 22}, (2021), 241--267. \


\bibitem{Binda-Krishna-21} F. Binda, A. Krishna, {\sl Zero-cycle groups on algebraic
    varieties\/}, J. {\'E}c. polytech. Math., {\bf 9}, (2022), 281--325. \

\bibitem{BKS} F. Binda, A. Krishna, S. Saito, {\sl
Bloch's formula for 0-cycles with modulus and the higher dimensional
class field theory\/}, J. Alg. Geom. (to appear), arXiv:200201856[math.AG],
(2020). \ 

\bibitem{BiswasSrinivas} J. Biswas, V. Srinivas, {\sl Roitman's theorem for singular
    projective varieties\/}, Compos. Math., {\bf 119}, (1999), 213--237. \

  
\bibitem{Cisinski} D.-C. Cisinski, {\sl Descente par  {\'e}clatements en $K$-th{\'e}orie
invariante par homotopie\/}, Ann. of Math., {\bf  177}, (2013), no. 2, 425--448. \

\bibitem{Cisinski-Deglise} D.-C. Cisinski, F. D{\'e}glise,
  {\sl Integral mixed motives in equal characteristic\/},
Doc. Math. Extra volume: Alexander S. Merkurjev's sixtieth birthday, (2015), 145--194. \




\bibitem{SP} A. J. de Jong et al., {\sl The Stacks Project\/},
Available at http://stacks.math.columbia.edu. (2020). \




  
\bibitem{EKW} H. Esnault, M. Kerz, O. Wittenberg, {\sl A restriction
isomorphism for cycles of relative dimension zero\/},
Cambridge J. Math., {\bf 4}, (2016), 163--196. \
  

\bibitem{FV} E. Friedlander, V. Voevodsky, {\sl Bivariant cycle cohomology\/},
in `Cycles, transfers, and motivic homology theories', Ann. of Math. Stud., {\bf 143},
Princeton University Press, (2000), 138--187. \


\bibitem {Fulton} W. Fulton, {\sl Intersection theory\/}, 2nd ed.,
Ergebnisse der Mathematik und ihrer Grenzgebiete 3,
Folge. A Series of Modern Surveys in Mathematics, 
{\bf 2}, Springer-Verlag, Berlin, 1998. \


\bibitem{Geisser} T. Geisser, {\sl On Suslin's singular homology and cohomology\/}.
Doc. Math., Extra volume: Andrei A. Suslin's sixtieth birthday,  (2010), 223--249. \


\bibitem{Geller-Weibel} S. Geller, C. Weibel, {\sl $K_{1}(A,\,B,\,I)$\/},
J. Reine. Angew. Math., {\bf 342}, (1983), 12--34. \

\bibitem{Ghosh-Krishna-Bertini} M. Ghosh, A. Krishna, {\sl Bertini theorems
revisited\/}, arXiv:1912.09076v2[math.AG], (2020). \

\bibitem{Ghosh-Krishna-CFT} M. Ghosh, A. Krishna, {\sl Zero-cycles on normal
    varieties\/}, J. Inst. Math. Jussieu (to appear),
  arXiv:2012.11249v2[math.AG], (2022). \


\bibitem{Gillet} H. Gillet, {\sl Riemann-Roch theorems for higher algebraic
    $K$-theory\/}, Adv. Math., {\bf 40},  (1981), 203--289. \

\bibitem{GW} U. G{\"o}rtz, T. Wedhorn, {\sl Algebraic geometry I, Schemes, with
    examples and exercises\/},
Advanced Lectures in Mathematics, Vieweg+Teubner, Wiesbaden, 2010. \

\bibitem{Gupta-Krishna-REC} R. Gupta, A. Krishna, {\sl Reciprocity for 
Kato-Saito idele class group with modulus\/}, J. Algebra., {\bf 608},
  (2022), 487--552. \

\bibitem{Gupta-Krishna-BF} R. Gupta, A. Krishna, {\sl Idele class groups
    with modulus\/},  Adv. Math., {\bf 404}, (2022), 1--75. \

\bibitem{Gupta-Krishna-Duality} R. Gupta, A. Krishna, {\sl 
Ramified class field theory and duality over finite fields\/},  
arXiv:2104.03029v1  [math.AG], (2021). \
  
\bibitem{GKR} R. Gupta, A. Krishna, J. Rathore, {\sl A decomposition theorem for
    0-cycles and applications\/}, Ann. Sc. Norm. Super. Pisa Cl. Sci. 
  (to appear), arXiv:2109:10037v1 [math.AG], (2021). \


\bibitem{Haesemeyer} C. Haesemeyer, {\sl Descent properties of homotopy $K$-theory\/},
  Duke Math. J., {\bf 125}, (2004), no. 3, 589--620. \

\bibitem{Hartshorne-ample} R. Hartshorne, {\sl Ample subvarieties of
algebraic varieties\/}, Lect. Notes Math., {\bf 156}, 
Springer, 2nd ed., 1980. \ 


\bibitem{Huber-Kelly} A. Huber, S. Kelly, {\sl Differential forms in positive
    characteristic II:cdh-descent via functorial Riemann-Zariski spaces\/},
  Algebra \& Number Theory, {\bf 12}, (2018), 649--692. \


\bibitem{Kato-Saito-86} K. Kato, S. Saito, {\em Global class field theory of 
arithmetic schemes\/}, Applications of algebraic K-theory to algebraic 
geometry and number theory, Contemp. Math., {\bf 55}, 
Amer. Math. Soc, Providence, RI, (1986), 255--331. \

\bibitem{Kelly} S. Kelly, {\sl Triangulated categories of motives in positive
    characteristic\/}, Ast\'{e}risque, {\bf 391}, (2017), pp.~125. \ 

\bibitem{Kerz-JAG} M. Kerz, {\sl Milnor K-theory of local rings with finite 
residue fields\/}, J. Alg. Geom., {\bf 19}, (2010), 173--191.\

\bibitem{Kerz-Saito-Duke} M. Kerz, S. Saito, {\sl Chow group of 0-cycles with modulus
and higher dimensional class field theory\/}, Duke Math. J., {\bf 165},
(2016), no. 15, 2811--2897. \

\bibitem{Kerz-Schmidt} M. Kerz, A. Schmidt, {\sl Covering data and higher dimensional global
    class field theory\/}, Journal of Number Theory, {\bf 129}, (2009), 2569--2599. \

\bibitem{Krishna-ant} A. Krishna, {\sl On 0-cycles with modulus\/}, 
  Algebra \& Number Theory, {\bf 9}, (2015), no. 10, 2397--2415. \
  
\bibitem{Krishna-torsion} A. Krishna, {\sl Torsion in the 0-cycle group with
    modulus\/}, Algebra \& Number Theory, {\bf 12}, (2018), 1431--1469. \

\bibitem{Krishna-Park} A. Krishna, J. Park, {\sl A module structure and a vanishing
    theorem for cycles with modulus\/}, Math. Res. Lett., {\bf 24}, (2017), 1147--1176. \

\bibitem{Krishna-Pelaez} A. Krishna, P. Pelaez, {\sl Slice spectral sequence for singular
schemes and applications\/}, Ann. $K$-Theory, {\bf 3}, (2018), 657--708. \
  

\bibitem{Krishna-Ravi} A. Krishna, C. Ravi, {\sl Algebraic $K$-theory of quotient stacks\/},
  Ann. $K$-Theory, {\bf 3}, (2018), 207-- 233. \


\bibitem{KSri} A. Krishna, V. Srinivas, {\sl Zero cycles and 
$K$-theory on normal surfaces\/}, Ann. of Math., {\bf 156}, no. 2, (2002),
155--195. \
  
\bibitem{Levine-Weibel} M. Levine, C. Weibel, {\sl Zero cycles and complete
    intersections on singular varieties\/}, J. Reine Angew.
Math., {\bf 359}, (1985), 106--120. \

\bibitem{Milne} J. Milne, {\sl Zero cycles on algebraic varieties in nonzero
    characteristic: Roitman's theorem\/}, Compos. Math., {\bf 47}, (1982), 271--287. \

\bibitem{Milnor} J. Milnor, {\sl Introduction to algebraic $K$-theory\/},
Ann. of Math. Stud., {\bf 72}, Princeton University Press, Princeton, (1971). \


\bibitem{MVW} C. Mazza, V. Voevodsky, C. Weibel, {\sl Lecture notes on motivic
cohomology\/}, Clay Mathematics Monographs, {\bf 2},
American Mathematical Society, Providence, 2006.  \

\bibitem{Miyazaki} H. Miyazaki, {\sl Cube invariance of higher Chow groups
with modulus\/}, J. Alg. Geom., {\bf 28}, (2019), 339--390. \


\bibitem{Navarro} A. Navarro, {\sl Riemann-Roch for homotopy invariant $K$-theory and
    Gysin morphisms\/}, Adv. Math., {\bf 328}, (2018), 501--554. \

\bibitem{Raskind} W. Raskind, {\sl Abelian class field theory of arithmetic 
schemes\/}, `(Santa Barbara, CA, 1992)', Proc. Sympos. Pure Math., {\bf 58}, 
Part-1, (1995), 85--187, Amer. Math. Soc., Providence, RI.  \

\bibitem{Roitman} A. Roitman, {\sl The torsion of the group of 0-cycles
modulo rational equivalence\/}, Ann. of Math., {\bf 111}, (1980),
553--569. \


\bibitem{Russell-ant} H. Russell, {\sl Albanese varieties with modulus over a 
perfect field\/}, Algebra \& Number Theory, {\bf 7}, (2013), 853--892. \

\bibitem{Russell-Rec} H. Russell, {\sl Geometric class field theory with bounded
    ramification for surfaces\/}, arXiv:1211.0883v2[math.AG], (2015). \

\bibitem{Schmidt-ant} A. Schmidt, {\sl Singular homology of arithmetic schemes\/},
Algebra \& Number Theory, {\bf 1}, No. 2, (2007), 183--222. \


\bibitem{Schmidt-Wiesend} A. Schmidt, {\sl Some consequences of Wiesend's higher dimensional
class field theory\/}, Math. Z., {\bf 256}, (2007), 731--736. \

\bibitem{Serre} J.-P. Serre, {\sl Morphisme universels et vari{\'e}t{\'e} d'Albanese\/},
  S{\'e}minaire Claude Chevalley, {\bf 4}, (1958--1959), no.~10, 1--22. \


\bibitem{SS} M. Spie{\ss}, T. Szamuely, {\sl On the Albanese map for
smooth quasi-projective varieties\/}, Math. Ann., {\bf 325}, (2003),
1--17. \


\bibitem{SuslinVoevSingular} A. Suslin, V. Voevodsky, {\sl Singular homology of 
abstract algebraic varieties\/}, Invent. Math., {\bf 123}, (1996), 61--94. \

\bibitem{Suslin-Voevodsky} A. Suslin, V. Voevodsky, {\sl Bloch-Kato conjecture and
    motivic cohomology with finite coefficients\/},
in `The arithmetic and geometry of algebraic
cycles (Banff, AB, 1998)', edited by B. B. Gordon et al., NATO Sci. Ser. C Math. Phys.
Sci. 548, Kluwer Acad. Publ., Dordrecht, 117--189, 2000. \ 


\bibitem{TT} R. Thomason, T.Trobaugh, {\sl Higher algebraic $K$-theory of
    schemes and of derived categories\/}, in `The Grothendieck Festschrift III',
Progress in Math., {\bf 88}, Birkh{\"a}user, Boston, 247--435, 1990.


\bibitem{VoevodskyHigherChow} V. Voevodsky, {\sl Motivic cohomology are isomorphic
    to higher Chow groups in any characteristic\/}, Int. Math. Res. Not., (2002),
no.~7, 3511--355. \

\bibitem{Weibel} C. Weibel, {\sl The $K$-book, An introduction to Algebraic
    $K$-theory\/}, Graduate studies in Mathematics, Amer. Math. Soc, Providence, RI,
  2013. \

\bibitem{Weibel-KH} C. Weibel, {\sl Homotopy algebraic $K$-theory\/},
  in  `Algebraic $K$-theory and algebraic number theory' (Honolulu, HI, 1987),
 Contemp. Math., American Mathematical Society, {\bf 83}, (1989), 461--488. \

\end{thebibliography}
\end{document}